%%%%%%%%%%%%%%%%%%%%%%%%%%%%%%%%%%%%%%%%%%
%\documentclass[a4paper,11pt,reqno]{amsart}
\documentclass[a4,11pt]{article}
%%%%%%%%%%%%%%%%%%%%%%%%%%%%%%%%%%%%%%%%%%

%%%%%%%%%%%%%%%%%%%%%%%%%%%%%%%%%%%%%%%%%%
\usepackage{amsmath,amsthm,amssymb}
\usepackage{mathrsfs}
\usepackage{braket}
\usepackage{comment}
%\usepackage[dvipdfmx]{hyperref}
%%%%%%%%%%%%%%%%%%%%%%%%%%%%%%%%%%%%%%%%%%

%%%%%%%%%%%%%%%%%Picture%%%%%%%%%%%%%%%%%%
%\usepackage{pgfplots}
%\usepackage{caption}
%\pgfplotsset{compat=1.9}
%\usepackage{graphicx}
%\usepackage[dvipdfmx]{graphicx}
%\usepackage{float}
%%%%%%%%%%%%%%%%%%%%%%%%%%%%%%%%%%%%%%%%%%

%% sheet style%%
%\setlength{\hoffset}{-1.1in}
%\setlength{\voffset}{-1.2in}
%\setlength{\oddsidemargin}{1in}
%\setlength{\evensidemargin}{1in}
%\setlength{\textwidth}{6.6in}
%\setlength{\textheight}{9.8in}
%\setlength{\topmargin}{1in}
%\setlength{\baselineskip}{14pt}
%%%%%%%%%%%%%%%%%%%%%%%%%%%%%%%%%%%%%%%%%%%
\usepackage[top=25truemm,bottom=25truemm,left=25truemm,right=25truemm]{geometry}
%%%%%%%%%%%%%%%%%%%%%%%%%%%%%%%%%%%%%%%%%%%

\numberwithin{equation}{section}
\allowdisplaybreaks

%%%%%%%%%%%%%%%%%%%%%%%%%%%%%%%%%%%%%%%%%%
\allowdisplaybreaks
%%%%%%%%%%%%%%%%%%%%%%%%%%%%%%%%%%%%%%%%%%

%%%%%%%%%%%%%%%%%%%%%%%%%%%%%%%%%%%%%%%%%%
%\theoremstyle{definition}
\newtheorem{definition}{Definition}[section]
\newtheorem*{definition*}{Definition}
\newtheorem{proposition}[definition]{Proposition}

\newtheorem{theorem}[definition]{Theorem}
\newtheorem*{theorem*}{Theorem}
\newtheorem{remark}[definition]{Remark}
\newtheorem{corollary}[definition]{Corollary}
\newtheorem{lemma}[definition]{Lemma}

\newtheorem{fact}[definition]{Fact}

%%%%%%%%%%%%%%%%%%%%%%%%%%%%%%%%%%%%%%%%%%

%%%%%%%%%%%%%%%%%%New command%%%%%%%%%%%%%%%%%%%%
\newcommand{\di}{\displaystyle}

\newcommand{\fgl}{\mathfrak{gl}}

\newcommand{\bof}{\alpha}%first boson
\newcommand{\chf}{\mathcal{Q}}
\newcommand{\zef}{\mathsf{a}_0}%first zero mode
\newcommand{\zchf}{\mathsf{Q}}
\newcommand{\boc}{\bar{h}}%Cartan mode
\newcommand{\chc}{\bar{Q}}
\newcommand{\zec}{\bar{\mathsf{a}}_0}
\newcommand{\zchc}{\bar{\mathsf{Q}}}
\newcommand{\Lamc}{\bar{\Lambda}}
\newcommand{\boo}[1][]{h^{\perp #1}}% boson orthogonal to Cartan
\newcommand{\cho}{Q^{\perp}}
\newcommand{\zeo}{\mathsf{a}^{\perp}_0}
\newcommand{\zcho}{\mathsf{Q}^{\perp}}
\newcommand{\Lamo}{\Lambda^{\perp}}
\newcommand{\bon}{a}%normal boson before q-deformation
\newcommand{\chn}{Q}
\newcommand{\bons}{\widetilde{a}}%second normal boson before q-deformation
\newcommand{\chns}{\widetilde{Q}}

\usepackage{xspace}
\newcommand{\svirm}{\mathsf{SVir}}
\newcommand{\svir}{$\svirm$\xspace}

\newcommand{\NPb}{\mathop{{\rlap{\raise.65ex\hbox{$\scriptscriptstyle\bullet$}}
\lower.3ex\hbox{$\scriptscriptstyle\bullet$}}}}
\newcommand{\NPc}{{\rlap{\raise.4ex\hbox{$\scriptscriptstyle\circ$}}
\lower.4ex\hbox{$\scriptscriptstyle\circ$}}}
%%%%%%%%%%%%%%%%%%%%%%%%%%%%%%%%%%%%%%%%%%%%

%%%%%%%%%%%%%Title and Authors%%%%%%%%%%%%%%
\title{Direct Sum Structure of the Super Virasoro Algebra and a Fermion Algebra Arising from the Quantum Toroidal $\fgl_2$}
\author{Yusuke~Ohkubo}
%\address{Institute for Fundamental Sciences, Setsunan University, 17-8 Ikeda Nakamachi, Neyagawa, Osaka 572-8508, Japan}
%\email{yusuke.ohkubo.math@gmail.com}
%\subjclass[2010]{81R10, 33D52, 81R50}
\date{}
\AtEndDocument{\bigskip {\small 
Institute for Fundamental Sciences, Setsunan University \par 
17-8 Ikeda Nakamachi, Neyagawa, Osaka 572-8508, Japan \par
\textit{E-mail}: \texttt{yusuke.ohkubo.math@gmail.com}
}}
%%%%%%%%%%%%%%%%%%%%%%%%%%%%%%%%%%%%%%%%%%%

\begin{document}

\maketitle

\begin{abstract}
It is known that the $q$-deformed Virasoro algebra can be constructed from 
a certain representation of the quantum toroidal $\fgl_1$ algebra. 
In this paper, we apply the same construction to the quantum toroidal algebra of type 
$\fgl_2$ and study the properties of resulting generators $W_i(z)$ ($i=1,2$). 
The algebra generated by $W_i(z)$ can be regarded as a 
$q$-deformation of the direct sum $\mathsf{F} \oplus \svirm$, 
where $\mathsf{F}$ denotes the free fermion algebra and \svir stands for the $N=1$ super Virasoro algebra, 
also referred to as the $N=1$ superconformal algebra or the Neveu-Schwarz-Ramond algebra. 
Moreover, the generators $W_i(z)$ admit two screening currents, and we show that 
their degeneration limits coincide with the screening currents of \svir. 
We also establish quadratic relations satisfied by $W_i(z)$
and show that they generate a pair of commuting $q$-deformed Virasoro algebras, 
which degenerate into two nontrivial commuting Virasoro algebras included in $\mathsf{F} \oplus \svirm$.  
\end{abstract}

\section{Introduction}
The quantum toroidal $\fgl_1$ algebra (hereafter denoted by $\mathcal{E}_1$)
or the Ding--Iohara--Miki algebra \cite{DI1997Generalization,Miki:2007}
possesses a free field realization associated with the Macdonald polynomials \cite{FHHSY2009commutative}. 
By extending this realization to the $N$-fold tensor product of Fock spaces, 
we can obtain the so-called level $N$ representation. 
From this level $N$ representation, 
by decoupling a Heisenberg algebra corresponding to the Cartan subalgebra, 
we can obtain the $q$-deformed Virasoro algebra or more generally the $q$-deformed W-algebra 
$\mathsf{W}_{q,t}(\mathfrak{sl}_N)$ \cite{SKAO1995quantum, AKOS1995Quantum, FF1995quantum}, 
as demonstrated in \cite{FHSSY2010Kernel}. 
Thus, the level 
$N$ representation of $\mathcal{E}_1$ can be viewed 
as the algebra $\mathsf{H} \otimes \mathsf{W}_{q,t}(\mathfrak{sl}_N)$. 
Here, we denote by $\mathsf{H}$ the $U(1)$ Heisenberg algebra. 
Furthermore, it was conjectured in \cite{AFHKSY2011notes} that in this representation, 
$q$-deformations of the conformal blocks in two-dimensional conformal field theories 
coincide with the Nekrasov partition functions of five-dimensional (K-theoretic) $U(N)$ gauge theories. 
This is one of five-dimensional analogs of the so-called AGT correspondence \cite{AGT2010liouville}, 
and the conjecture has subsequently been proved, including a formula for the Kac determinant in the level 
$N$ representation, in \cite{Ohkubo2017Kac,FOS2020Generalized}. 
For an interpretation in the context of geometric representation theory, see \cite{Negut2016qAGTW}. 
The five-dimensional AGT correspondence based on the level $N$ representation can be regarded as a 
$q$-analogue of the work of Alba, Fateev, Litvinov, and Tarnopolsky \cite{AFLT2011combinatorial,FL2011Integrable}. 
In their approach, they considered a special basis on the modules of the algebras 
$\mathsf{H}\oplus \mathsf{Vir}$ or $\mathsf{H}\oplus \mathsf{W}_N$
and provided a natural interpretation of the correspondence with four-dimensional $U(N)$ gauge theories. 
Here, $\mathsf{Vir}$ and $\mathsf{W}_N$ denote the usual Virasoro and $W_N$-algebras, respectively.

In this paper, we apply the same construction used to obtain the $q$-deformed Virasoro algebra from $\mathcal{E}_1$ 
to the quantum toroidal algebra of type $\fgl_2$ (hereafter denoted by $\mathcal{E}_2$), 
and study the resulting algebraic structures. 
The obtained generators are $W_i(z)$ ($i=1,2$),  defined as follows. 
The precise definition of $W_i(z)$ is given in Definition \ref{def: W}. 
These generators are the main objects of study in this paper. 
We also mention that preliminary computations of this work were reported in the bulletin \cite{Ohkubo2025toward}.

\begin{definition*}%\label{def: W}
Set 
\begin{align}
W_i(z)&=\Lambda^{+}_i(z)+\Lambda^{-}_i(z)\qquad (i=1,2), \\
\Lambda^{+}_i(z)&
=:\exp\left( -\sum_{r\neq 0}\frac{q^n}{n} \boo_{i,n}z^{-n}\right):
e^{\cho_i}(q^{-1}z)^{\boo_{i,0}}q_1^{\zeo},\\
\Lambda^{-}_i(z)&
=:\exp\left( \sum_{n\neq 0}\frac{q^{-n}}{n} \boo_{i,n}z^{-n}\right):
e^{-\cho_i}(qz)^{-\boo_{i,0}}q_1^{-\zeo}. 
\end{align}
Here, we used the Heisenberg algebra generated by $\boo_{i,n}, \cho$ and $\zeo, \zcho$ ($n\in\mathbb{Z},\; i=1,2$)
with the commutation relations 
\begin{align}
&[\boo_{i,n},\boo_{j,m}]=
\begin{cases}
n\delta_{n+m,0}, & i=j,  \\ 
-n \dfrac{q_1^n+q_3^n}{1+q_2^{-n}} \delta_{n+m,0}, &i \neq j, 
\end{cases} 
\quad [\boo_{i,0}, \cho_j]=\begin{cases}
1 & i=j, \\ -1& i\neq j, 
\end{cases}\quad 
[\zeo, \zcho]=\frac{\beta}{2}.
\end{align}
together with the conditions $\boo_{1,0}=-\boo_{2,0}, \cho_1=-\cho_2$. 
The other commutation relations are zero. 
As for the parameters $q, q_1,q_2,q_3$ and $\beta$, 
see Section \ref{sec: main op}. 
\end{definition*}

The algebra generated by $W_i(z)$ can be regarded as a 
$q$-deformation of the direct sum $\mathsf{F}\oplus \svirm$, 
where 
$\mathsf{F}$ denotes the free fermion algebra and \svir stands for the $N=1$ super Virasoro algebra 
(also called the $N=1$ superconformal algebra or the Neveu--Schwarz--Ramond algebra). 
Although $W_i(z)$ is written in terms of the two bosons, 
in the degenerate limit
one of them can be reinterpreted as a pair of fermions via the boson-fermion correspondence. 
This reinterpretation establishes an explicit connection with the free field realization of $\mathsf{F}\oplus \svirm$. 

The appearance of such an algebra can be understood in the context of the AGT correspondence.
In the undeformed setting, 
the gauge theories on the ALE space $ALE_n$ of type $A_n$
(a resolution of the orbifold $\mathbb{C}^2/{\mathbb{Z}_n}$)
have been related to superconformal field theories with the symmetry algebra \svir 
or its generalizations.  For example, see 
\cite{BF2011Super,BBFLT2011Instanton,BBT2012Bases,Ito2011Ramond,NT2011Central,IOY20132d4d,IOY2014qVirasoro}.
In particular, the work in \cite{BBFLT2011Instanton} 
extends the approach based on the special basis in \cite{AFLT2011combinatorial,FL2011Integrable} 
to the superconformal field theory with the symmetry algebra 
$\mathsf{H}\oplus\mathsf{H}\oplus\mathsf{F}\oplus\svirm$, 
which corresponds to the $U(2)$ gauge theory on $ALE_2$.%
\footnote{As a more general framework, a coset conformal field theory corresponding to $U(r)$ gauge theories on $\mathbb{C}^2/\mathbb{Z}_n$ has been proposed in \cite{BF2011Super}.}
In the module of this algebra,  
a special basis was constructed, 
whose matrix elements of the primary field reproduce the Nekrasov factors. 
On the other hand, in the $q$-deformed setting, 
there is another approach based on the use of trivalent intertwiners of the quantum toroidal algebras. 
 In \cite{AFS2012quantum}, trivalent intertwiners for $\mathcal{E}_1$ were introduced, 
reproducing Awata--Kanno's and Iqbal--Kozkaz--Vafa's refined topological vertexes \cite{AK2008Refined,IKV2007Refined}. 
The suitable compositions of these intertwiners yield the Nekrasov partition functions of five-dimensional 
gauge theories on $\mathbb{R}^4\times S^1$. 
This construction has been generalized to the quantum toroidal algebras of type $\fgl_n$ ($n\geq 3$), 
in \cite{AKMMSZ2018KZ}, 
which reproduce the Nekrasov partition functions on $ALE_n \times S^1$. 
In light of these developments, 
it is natural to expect that, 
starting from a suitable representation of $\mathcal{E}_2$,  
we can obtain a $q$-deformation of $\mathsf{F}\oplus \svirm$ 
by decoupling two Heisenberg algebras in a similar manner to the $\fgl_1$ case. 
A more ambitious goal is to decouple a component corresponding to the fermion algebra $\mathsf{F}$ 
and construct a $q$-deformation of the pure super Virasoro algebra \svir. 
At present, however, an efficient method for removing the contribution of $\mathsf{F}$
from the generators $W_i(z)$ remains elusive. 
We also note that a $q$-deformation of the $N=2$ superconformal algebra 
was recently proposed in \cite{AHKS2024quantum}.

The algebra generated by $W_i(z)$ has two screening currents $S^{\pm}(z)$ (See Definition \ref{def: screening}), 
which are essentially the same as those given in \cite{Zenkevich2019gln}. 
Moreover, in \cite{Zenkevich2019gln} screening currents associated with the quantum toroidal $\fgl_n$ algebra for general $n$ were also constructed. 
Each screening current $S^{\pm}(z)$ is expressed as a sum of two exponential terms, 
and exhibits a structure similar to that of the bosonic screening for the quantum affine algebra $U_q(\widehat{\mathfrak{sl}}_2)$ \cite{Matsuo1994qdeformation} or the $q$-deformed $N=2$ superconformal algebra \cite{AHKS2024quantum}. 
However, a comparison of the operator product formulas of $S^{\pm}(z)$ shows differences 
(See Appendix \ref{sec: OPE screening}). 
In the degenerate limit, the screening currents $S^{\pm}(z)$ reduce to 
those of \svir \cite{KM1986null} via the boson-fermion correspondence.

It is known that the singular vectors of \svir correspond to the Uglov polynomials
\cite{BBT2012Bases,BV2022NSR,Yanagida2015singular}. 
The singular vectors obtained from the screening currents $S^{\pm}(z)$ 
are expected to correspond to an uplift of the Uglov polynomials. 
For related results on the correspondence between Jack polynomials and the Virasoro or 
W-algebras, 
and on that between Macdonald polynomials and the $q$-deformed Virasoro or W-algebras, 
see \cite{MY1995Singular,MY1995RIMSkokyuroku,AMOS1995Excited,SKAO1995quantum,AKOS1995Quantum}.

Moreover, the relations of $W_i(z)$ allow us to generate a family of operators 
$\mathcal{T}(\xi;z)$ depending on a non-zero complex parameter $\xi$ 
(See Definition \ref{def: T op.} and the shorthand notation (\ref{eq: T shorthand})). 
For the special choices $(\xi_1, \xi_2)=(q_1^{\pm 1}, q_3^{\pm 1})$, 
two operators $\mathcal{T}(\xi_1,z)$ and $\mathcal{T}(\xi_2,w)$ commute, 
and they satisfy the relation of the $q$-deformed Virasoro algebra. 
The resulting relations can be summarized as follows (See Section \ref{sec: quad rel}). 
\begin{theorem*}We obtain 
\begin{gather}
W_i(z)W_i(w)+W_i(w)W_i(z)
=qz^{-1}\delta\left( \frac{q_2w}{z} \right) +q^{-1}z^{-1}\delta\Big( \frac{w}{q_2z} \Big),\\
\xi \cdot f\left(\xi; \frac{w}{z}\right) W_1(z) W_2(w) 
+f\left(\xi^{-1} ; \frac{z}{w} \right) W_2(w) W_1(z)
= \delta \Big( \frac{\xi w}{z}\Big) w^{-1} \mathcal{T}(\xi; w),
\end{gather}
\begin{gather}
\big[\mathcal{T}(q_1;z), \mathcal{T}(q_3;w)\big]
=\left[\mathcal{T}(q_1^{-1};z), \mathcal{T}(q_3^{-1};w)\right]=0,\\
g^{(k)}\left( \frac{w}{z} \right) \mathcal{T}(q_k^{\pm 1};z)\mathcal{T}(q_k^{\pm 1};w)
-g^{(k)}\left( \frac{z}{w} \right) \mathcal{T}(q_k^{\pm 1};w)\mathcal{T}(q_k^{\pm 1};z)
=\mathcal{C}^{(k)} \cdot \Big(  \delta \Big( \frac{w}{q_2z}\Big) - \delta \left( \frac{q_2w}{ z}\right) \Big), 
\end{gather}
\begin{gather}
q_k^{\pm 1} \cdot f\left( q_k^{\pm 1} ; \frac{w}{z} \right) W_1(z) \mathcal{T}(q_k^{\pm 1};w) 
- f\left( q_k^{\mp 1};\frac{z}{w} \right) \mathcal{T}(q_k^{\pm 1} ;w)  W_1(z)\nonumber \\
=\pm q^{\pm 1} (q_k-q_k^{-1}) \delta \bigg( \frac{q_k^{\pm 1} w}{q_2^{\pm 1}z} \bigg) W_2(w),\\
q_k^{\mp 1} \cdot f\Big( q_k^{\mp 1} ; \frac{q_k^{\pm 1} w}{z} \Big) W_2(z) \mathcal{T}(q_k^{\pm 1};w) 
- f\Big( q_k^{\pm 1};\frac{z}{q_k^{\pm 1} w} \Big) \mathcal{T}(q_k^{\pm 1};w)  W_2(z)\nonumber\\
=\mp q^{\mp 1} (q_k-q_k^{-1})\delta \left( \frac{q_2^{\pm 1}w}{z} \right)W_1(q_k^{\pm 1} w),
\end{gather}
\begin{comment}
\begin{gather} % only T(q_k;z)
q_k \cdot f\left( q_k ; \frac{w}{z} \right) W_1(z) \mathcal{T}(q_k;w) 
- f\left( q_k^{-1};\frac{z}{w} \right) \mathcal{T}(q_k;w)  W_1(z)\nonumber \\
=q(q_k-q_k^{-1})\delta \left( \frac{q_kw}{q_2z} \right)W_2(w),\\
q_k^{-1} \cdot f\left( q_k^{-1} ; \frac{q_kw}{z} \right) W_2(z) \mathcal{T}(q_k;w) 
- f\Big( q_k;\frac{z}{q_kw} \Big) \mathcal{T}(q_k;w)  W_2(z)\nonumber\\
=-q^{-1} (q_k-q_k^{-1})\delta \left( \frac{q_2w}{z} \right)W_1(q_k w).
\end{gather}
\end{comment}
for $i=1,2$ and $k=1,3$. 
Here, we set
\begin{gather}
f(\xi; z)
%=\frac{1}{1-\xi z}f\left(z\right)
=\exp\left\{ \sum_{n=1}^{\infty} \left( \xi^n -\frac{q_1^n+q_3^n}{(1+q_2^{-n})} \right) \frac{z^n}{n} \right\},\\
g^{(k)}\left( z\right)=\exp \left( \sum_{n>0} \frac{(1-q_k^{2n})(1-q_2^{-n}q_k^{-2n})}{n(1+q_2^{-n})} z^n \right), \quad 
\mathcal{C}^{(k)}=-\frac{(1-q_k^2)(1-q_2^{-1}q_k^{-2})}{1-q_2^{-1}}. 
\end{gather}
\end{theorem*}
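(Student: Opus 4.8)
The plan is to derive every identity in the statement from operator product expansions among the dressed vertex operators $\Lambda^{\pm}_i(z)$ of Definition~\ref{def: W}, followed by a resummation into delta functions. All six displays are quadratic relations of the type familiar from the free-field construction of the $q$-deformed Virasoro algebra, and I would treat them uniformly by the following three-step mechanism: (i) compute the normal-ordering (contraction) prefactor for each ordered pair $\Lambda^{\epsilon}_i(z)\Lambda^{\epsilon'}_j(w)$; (ii) check that in the precise linear combinations written in the theorem the regular parts of these prefactors cancel, leaving only simple poles; (iii) resum the two convergence regions and read off the residues, which assemble into the delta-function right-hand sides via the standard expansion of $1/(1-x)$ in $|x|<1$ and $|x|>1$ whose difference is $\delta(x)=\sum_{n\in\mathbb{Z}}x^n$.

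For step (i) I would prove a single ``master contraction lemma''. Using $:e^{A}::e^{B}:=\exp([A_{+},B_{-}])\,:e^{A+B}:$ together with the Heisenberg relations of Definition~\ref{def: W} --- the diagonal $[\boo_{i,n},\boo_{i,m}]=n\delta_{n+m,0}$, the off-diagonal $[\boo_{1,n},\boo_{2,m}]=-n\frac{q_1^n+q_3^n}{1+q_2^{-n}}\delta_{n+m,0}$, the momentum pairings $[\boo_{i,0},\cho_j]=\pm1$ subject to $\cho_1=-\cho_2$ and $\boo_{1,0}=-\boo_{2,0}$, and the central $[\zeo,\zcho]=\beta/2$ (which never actually contributes, since $\zcho$ does not occur in $\Lambda^{\pm}_i$) --- each product $\Lambda^{\epsilon}_i(z)\Lambda^{\epsilon'}_j(w)$ becomes an explicit scalar function of $w/z$ times a normal-ordered exponential. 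One finds that $\Lambda^{\pm}_i(z)\Lambda^{\pm}_i(w)$ carries a prefactor proportional to $z-w$ (hence antisymmetric in $z,w$ after multiplying by the $z\leftrightarrow w$--symmetric normal-ordered part), that $\Lambda^{+}_i(z)\Lambda^{-}_i(w)$ has a simple pole at $z=q_2w$ where the normal-ordered part collapses to $1$ (similarly $\Lambda^{-}_i(z)\Lambda^{+}_i(w)$ at $z=q_2^{-1}w$), and that the mixed products $\Lambda^{\epsilon}_1(z)\Lambda^{\epsilon'}_2(w)$ produce exactly the infinite product $f(\,\cdot\,;w/z)$ of the statement; the functions $g^{(k)}$ and the constants $\mathcal{C}^{(k)}$ emerge the same way after specialization. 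It is here that the relations among $q,q_1,q_2,q_3$ fixed in Section~\ref{sec: main op} and the special values $\xi\in\{q_1^{\pm1},q_3^{\pm1}\}$ intervene: only for those choices do the otherwise transcendental prefactors reduce to rational functions with the displayed poles.

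Granting the master lemma, the identities follow by bookkeeping. The first display is the anticommutator of $W_i$ with itself: the $(+,+)$ and $(-,-)$ contributions vanish by the antisymmetry noted above, while the $(+,-)$ and $(-,+)$ contributions, summed over the two orderings, give $qz^{-1}\delta(q_2w/z)+q^{-1}z^{-1}\delta(w/q_2z)$. The second display expands $W_1(z)W_2(w)$ into four terms; dividing by the appropriate $f$ so that the surviving pieces share a common pole at $z=\xi w$ and collecting the residue yields a normal-ordered operator which is, by Definition~\ref{def: T op.} and the shorthand~(\ref{eq: T shorthand}), precisely $w^{-1}\mathcal{T}(\xi;w)$, so this identity is essentially the unpacking of the definition of $\mathcal{T}$. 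The third and fourth displays concern $\mathcal{T}(q_k^{\pm1};z)\mathcal{T}(q_{k'}^{\pm1};w)$: expanding via Definition~\ref{def: T op.} and applying the master lemma term by term, one finds for $\{k,k'\}=\{1,3\}$ that all would-be poles cancel and the structure functions in the two orderings coincide as rational functions, forcing commutativity, whereas for $k=k'$ one gets the $q$-Virasoro-type quadratic relation with structure function $g^{(k)}$ and residues $\pm\mathcal{C}^{(k)}$ at $z=q_2^{\mp1}w$. Finally, in each of the two mixed $W$--$\mathcal{T}$ displays only one of the four vertex-operator products has a pole; on that pole the surviving normal-ordered product degenerates to a single $\Lambda^{\pm}_2(w)$, resp.\ $\Lambda^{\pm}_1(q_k^{\pm1}w)$, which recombines with its partner into $W_2(w)$, resp.\ $W_1(q_k^{\pm1}w)$, with the stated coefficient $\pm q^{\pm1}(q_k-q_k^{-1})$.

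The main obstacle is organizational rather than conceptual: there are many vertex-operator products, the momentum factors $e^{\pm\cho_i}$, $(q^{\mp1}z)^{\pm\boo_{i,0}}$, $q_1^{\pm\zeo}$ must be transported consistently through every contraction (and one must verify that the residual $(\text{variable})^{\boo_{i,0}}$ factors evaluate to $1$ on the supports of the delta functions), and the delicate point throughout is to confirm that the regular parts of the contraction functions cancel in exactly the written combinations and that the surviving residues reproduce the precise constants $\mathcal{C}^{(k)}$ and the precise delta-function arguments. Establishing the master contraction lemma once and in full generality is therefore the natural way to organize the computation, so that these cancellations need only be checked a single time.
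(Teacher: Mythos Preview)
Your approach---compute the $\Lambda^{\epsilon}_i\Lambda^{\epsilon'}_j$ contractions once and for all, then extract delta functions from the simple poles---is exactly the paper's; Section~\ref{sec: quad rel} carries it out identity by identity in just this way.

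One correction to your outline for the mixed $W_1$--$\mathcal{T}(q_k^{\pm1};w)$ relation: at $\xi=q_k$ one has $M_4=0$, so $\mathcal{T}(q_k;\cdot)=M_1+M_2+w^2M_3$ and there are \emph{six} products $\Lambda^{\pm}_1\cdot M_{1,2,3}$, not four, and three of them contribute poles rather than one. The term $\Lambda^-_1(z)M_3(q_k;w)$ in fact produces \emph{two} delta functions, at $z=q_kw$ and $z=q_2^{-1}q_kw$; the first cancels against the pole of $\Lambda^+_1(z)M_2(q_k;w)$, while the second together with the pole of $\Lambda^-_1(z)M_1(q_k;w)$ yields $\Lambda^-_2(w)$ and $\Lambda^+_2(w)$ respectively, which then assemble into $W_2(w)$. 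The same ``extra $M_3$/$M_4$ cancels against a piece of $M_1$ or $M_2$'' mechanism is what makes the $\mathcal{T}\mathcal{T}$ identities work (the paper first observes that $M_1+M_2$ alone already satisfies the $q$-Virasoro relation with structure function $g^{(k)}$, and then checks that the cross-terms with $M_3$ or $M_4$ cancel pairwise). So this cancellation is the one genuinely nontrivial step beyond the master contraction lemma, and your outline undersells it.
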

In the degenerate limit,
the commuting operators $\mathcal{T}(q_1^{\pm 1};z)$ and $\mathcal{T}(q_3^{\pm 1};z)$ give rise to two nontrivial commuting Virasoro algebras included in $\mathsf{F}\oplus \svirm$. 
These Virasoro pairs serve as a main tool in the construction of the special basis in the work of \cite{BBFLT2011Instanton}. 
Therefore, the operators $\mathcal{T}(\xi;z)$ are expected to provide a natural $q$-deformation of that basis. 
We note that $q$-deformation of \cite{BBFLT2011Instanton} is essentially suggested in \cite{FJMM2016branching},
and the operators $\mathcal{T}(q_k^{\pm 1};z)$ ($k=1,3$) 
can be viewed as analogues of the fused currents introduced there. 
In addition, 
for $\xi =q_k^{\pm 1}$ ($k=1,3$), the relations among $W_i(z)$ and $\mathcal{T}(\xi;z)$ close, 
so that we can define an algebra with these as defining relations. 
These relations also make it possible to analyze highest weight modules. 
We leave such investigations for future work.

There have been a lot of related studies on derivation of various deformed W-algebras 
from quantum toroidal algebras, including supersymmetric cases. 
For example, see \cite{FJMV2021Deformations,FJM2018towards}. 
The related constructions also include 
a $q$-deformation of the corner vertex operator algebra (Gaiotto--Rapcak's $Y$-algebra \cite{GR2017Vertex})
discussed in \cite{Harada2020Quantum,HMNW2021deformation} and 
the $q$-deformation of the $N=2$ superconformal algebra mentioned above \cite{AHKS2024quantum}. 
Based on the structure of the screening currents, however, the generators 
$W_i(z)$ introduced in this paper appear to differ from those of the algebras. 
%It remains possible that they become equivalent under a suitable transformation, 
%or that some operators generated from $W_i(z)$ coincide with the generators of these algebras.
%Clarifying these connections is left for future work.

This paper is organized as follows. 
In Section \ref{sec: main op}, 
the free field realization of $\mathcal{E}_2$ is given, 
and the main operators $W_i(z)$ are derived. 
In Section \ref{sec: lim}, 
we provide a brief review of the free field realization and the screening currents of \svir. 
We further discuss the degenerate limit of the generators $W_i(z)$, 
from which the algebra $\mathsf{F} \oplus \svirm$ appears. 
These two sections revisit the results previously reported in \cite{Ohkubo2025toward}, 
with minor adjustments and improved exposition.
In Section \ref{sec: screening}, 
we introduce the screening currents $S^{\pm}(z)$
and show that they reduce to the ones of \svir. 
In Section \ref{sec: quad rel}, 
we calculate the quadratic relations satisfied by $W_i(z)$ and $\mathcal{T}(\xi;z)$, 
and we investigate the degenerate limit of $\mathcal{T}(\xi;z)$. 
We prove the free field realization of $\mathcal{E}_2$ in Appendix \ref{sec: pf rep} 
and prove some formulas on the boson--fermion correspondence in Appendix \ref{sec: fml bf corresp}. 
We present formulas for the operator products of the screening currents $S^{\pm}(z)$ in Appendix \ref{sec: OPE screening}.

\section{Derivation of the main operator}\label{sec: main op}

In this section, we describe the definition of the quantum toroidal $\fgl_2$ algebra $\mathcal{E}_2$ 
and present its free field realization.
We further decompose the Cartan modes from its tensor representation 
and derive the main operators $W_i(z)$.
The definition of $\mathcal{E}_2$ follows \cite{FJMM2016branching}.

\subsection{Definition of  the quantum toroidal $\fgl_2$ algebra}
Let $q$ and $d$ be complex parameters satisfying that $q^n d^m \neq 1$ for any $n,m\in \mathbb{Z}$ ($n\neq 0$ or $m \neq 0$). 
Set 
\begin{equation}
q_1=q^{-1}d, \quad 
q_2=q^2, \quad 
q_3=q^{-1}d^{-1}. 
\end{equation}
Note that $q_1q_2q_3=1$. Further, we set 
\begin{align}
\beta = - \frac{\log q_3}{\log q_1}
\end{align}
so that $q_3=q_1^{-\beta}$. 

$\mathcal{E}_2$ is the unital associative algebra generated by 
$E_{i,n}, F_{i,n}, H_{i,k}, K_i^{\pm 1} (n \in \mathbb{Z},\; k \in \mathbb{Z}_{\neq 0}, \;  i=1,2)$
and central elements $q^{\pm c}$. We use the following formal generating series: 
\begin{align}
&E_i(z) =\sum_{n\in \mathbb{Z}}E_{i,n}z^{-n}, \quad 
F_i(z) =\sum_{n\in \mathbb{Z}}F_{i,n}z^{-n}, \quad\\ 
%K_i^{\pm}(z) =\sum_{\pm k\in \mathbb{Z}_{\geq 0}}K_{i,k}z^{-k}
&K_i^{ \pm}(z)=K_i^{ \pm 1} \exp \left( \pm\left(q-q^{-1}\right) \sum_{n=1}^{\infty} H_{i, \pm n} z^{\mp n}\right). 
\end{align}
The defining relations are given by 
\begin{align}
K_i K_i^{-1}=K_i^{-1} K_i=1,& \qquad q^c q^{-c}=q^{-c} q^c=1, \label{eq: def rel KK c}\\
K^\pm_i(z)K^\pm_j (w) &= K^\pm_j(w)K^\pm_i (z),\label{eq: def rel KK} \\
\frac{g_{i,j}(q^{-c}z,w)}{g_{i,j}(q^cz,w)}K^-_i(z)K^+_j (w) 
&=\frac{g_{j,i}(w,q^{-c}z)}{g_{j,i}(w,q^cz)} K^+_j(w)K^-_i (z), \label{eq: def rel KK2}\\
d_{i,j}g_{i,j}(z,w)K_i^\pm(q^{(1\mp1)c/2}z)E_j(w)&+g_{j,i}(w,z)E_j(w)K_i^\pm(q^{(1\mp1) c/2}z)=0, \\
d_{j,i}g_{j,i}(w,z)K_i^\pm(q^{(1\pm1)c/2}z)F_j(w)&+g_{i,j}(z,w)F_j(w)K_i^\pm(q^{(1\pm1) c/2}z)=0\,,\\
d_{i,j}g_{i,j}(z,w)E_i(z)E_j(w)&+g_{j,i}(w,z)E_j(w)E_i(z)=0, \label{eq: def rel EE} \\
d_{j,i}g_{j,i}(w,z)F_i(z)F_j(w)&+g_{i,j}(z,w)F_j(w)F_i(z)=0, \label{eq: def rel FF}\\
[E_i(z),F_j(w)]=\frac{\delta_{i,j}}{q-q^{-1}}
&(\delta\bigl(q^c\frac{w}{z}\bigr)K_i^+(z)
-\delta\bigl(q^c\frac{z}{w}\bigr)K_i^-(w)),\label{eq: def rel EF}
\end{align}
\begin{align}
\underset{z_1, z_2, z_3}{\operatorname{Sym}}\Big[E_i (z_1),\Big[E_i (z_2), &\big[E_i (z_3), 
E_j(w)\big]_{q_2}\Big] \Big]_{q_2^{-1}}=0\quad (i\neq j), \label{eq:serr1} \\
\underset{z_1, z_2, z_3}{\operatorname{Sym}}\Big[F_i (z_1 ),\Big[F_i (z_2),&\big[F_i (z_3), 
F_j(w)\big]_{q_2}\Big]\Big]_{q_2^{-1}}=0\quad (i\neq j).\label{eq:serr2} 
\end{align}
Here, we have set
\begin{align}
&g_{i,j}(z,w)=\begin{cases}
	      z-q_2w & (i= j),\\
              (z-q_1w)(z-q_3w)& (i\neq  j),
	     \end{cases}\qquad 
d_{i,j}=
\begin{cases}
1& (i= j), \\
-1 & (i\neq j). \\
\end{cases}
\end{align}
$\delta_{i,j}$ is the Kronecker delta, 
and we used the formal delta function $\di \delta(z)=\sum_{n\in \mathbb{Z}} z^n$. 
In the Serre relations (\ref{eq:serr1}) and (\ref{eq:serr2}), 
$\underset{z_1, z_2, z_3}{\operatorname{Sym}}$ denotes the symmetrization with respect to $z_1, z_2, z_3$, 
and we put $[A, B]_p=AB-p BA$. 

Moreover, $\mathcal{E}_2$ is equipped with the coproduct structure. 
The formulas for the coproduct $\Delta$ are given by  
\begin{align}
&\Delta (E_i(z))=E_i(z)\otimes 1+K_i^-(C_1 z)\otimes E_i(C_1 z)\,, \\ %\quad 
&\Delta (F_i(z))=F_i(C_2 z)\otimes K_i^+(C_2 z)+1\otimes F_i(z)\,,\\
&\Delta (K^{+}_i(z))=K_i^+(z)\otimes K^+_i(C_1^{-1} z)\,, \quad 
\Delta (K^{-}_i(z))=K_i^-(C_2^{-1} z)\otimes K^-_i(z)\,,\\
&\Delta (q^{c})=q^{c}\otimes q^{c}\,. 
\end{align}
Here, we set $C_1=q^{c}\otimes 1$, $C_2=1\otimes q^{c}$. 

\begin{remark}
The generators $H_{i,n}$ form the Cartan subalgebra. The coproduct for $H_{i,n}$ takes the form 
\begin{align}\label{eq: Delta h}
\Delta (H_{i,-n})=C_2^{-n} H_{i,-n} \otimes 1 +1\otimes H_{i,-n}, \quad 
\Delta (H_{i,n})=H_{i,n} \otimes 1 +C_1^{n} 1\otimes H_{i,n} \quad (n>0).
\end{align}
\end{remark}

\subsection{Free field realization and decoupling of the Cartan part}

We define the Heisenberg algebra $\mathcal{H}_q$ generated by $\bof_{i,n}$, $\chf_i$ ($n\in \mathbb{Z}, i=1,2$)
and $\zef, \zchf$ with the commutation relations
\begin{align}
&[\bof_{i,n},\bof_{j,m}]=
\begin{cases}
n(1+q_2^{-|n|}) \delta_{n+m,0}, \quad i=j, \\
-n(q_1^{|n|}+q_3^{|n|})\delta_{n+m,0}, \quad i\neq j,
\end{cases}\\
&[\bof_{i, n}, \chf_j]=
\begin{cases}
2 \, \delta_{n,0}, \quad i=j, \\
-2 \, \delta_{n,0}, \quad i\neq j, 
\end{cases}
\quad [\zef, \zchf]=\beta, \quad 
\end{align}
together with the condition $\bof_{1,0}=-\bof_{2,0}, \chf_1=-\chf_2$. 
Suppose that the other commutation relations are zero. 
Let $\ket{0}$ be the highest weight vector satisfying $\bof_{i,n}\ket{0}=\zef \ket{0}=0$ ($n \geq 0$). 
For an integer $n$ and a complex number $u$, we define $\ket{n, u}=e^{\frac{n}{2}\chf_1+\frac{u}{\beta} \zchf}\ket{0}$, 
so that 
\begin{align}
\bof_{1,0} \ket{n,u} =n \ket{n,u}, \quad \bof_{2,0} \ket{n,u} = -n \ket{n,u}, \quad \zef \ket{n,u}=u\ket{n,u}.
\end{align} 
We define the Fock module $\mathcal{F}(n,u)$ by 
%$\mathcal{F}(n,u)=\mathcal{H}^-\cdot \ket{n,u}$ 
\begin{align}
\mathcal{F}(n,u)=\mathbb{C}[\bof_{1,-1}, \bof_{1,-2},\ldots, \bof_{2,-1},\bof_{2,-2},\ldots] \ket{n,u}
\end{align}
and set 
$\di \mathcal{F}_u=\bigoplus_{n \in \mathbb{Z}} \mathcal{F}(n,u)$. 
The algebra $\mathcal{E}_2$ admits a representation realized by the vertex operators introduced below. 
Similar representations were given in \cite{Saito1998quantum,STU1998toroidal}
for the quantum toroidal $\fgl_n$ algebra with $n\geq 3$. 
The representation presented in this paper 
is an adaptation of that construction to the case of $\fgl_2$, 
with a slight modification in the treatment of the zero modes. 
For the case of type $\fgl_1$, see \cite{FHHSY2009commutative}. 
See also \cite{FJM2018towards} for another free field realization of $\mathcal{E}_2$. 

\begin{definition}
Define the vertex operators
$\eta_{i}(z), \xi_{i}(z), \varphi_i^{\pm}(z)\in \mathrm{End}(\mathcal{F}_u)[[z, z^{-1}]]$ ($i=1,2$) by 
\begin{align}
&\eta_{i}(z)=
%\exp\left( \sum_{r>0}\frac{1}{r}\alpha_{i,-r}z^r \right)
%\exp\left( -\sum_{r>0}\frac{1}{r}\alpha_{i,r}z^{-r} \right),
:\exp\left( -\sum_{n\in \mathbb{Z}_{\neq 0}}\frac{1}{n}\alpha_{i,n}z^{-n} \right):, \qquad \qquad 
\xi_{i}(z)=
%\exp\left( -\sum_{r>0}\frac{q^r}{r}\alpha_{i,-r}z^r \right)
%\exp\left(\sum_{r>0} \frac{q^r}{r}\alpha_{i,r}z^{-r} \right),\\ 
:\exp\left(\sum_{n\in \mathbb{Z}_{\neq 0}} \frac{q^{|n|}}{n}\alpha_{i,n}z^{-n} \right):,\\
&\varphi_i^+(z)=\exp\left( -\sum_{n>0}\frac{1-q_2^{n}}{n}q^{-n/2}\alpha_{i,n}z^{-n} \right),\quad 
\varphi_i^-(z)=\exp\left( \sum_{n>0}\frac{1-q_2^{n}}{n}q^{-n/2}\alpha_{i,-n}z^n \right). 
\end{align}
Here, the symbol $:\ast:$ stands for the normal ordering of the Heisenberg algebra $\mathcal{H}_q$. 
\end{definition}

\begin{proposition}\label{prop: rep tor gl2}
The following assignment $\rho_u$ gives a representation of $\mathcal{E}_2$ on the Fock module $\mathcal{F}_u$: 
\begin{align}
&\rho_u (E_i(z))=\eta_i(z)\times e^{\chf_i}z^{\bof_{i,0}+1} q_1^{\zef},\quad 
\rho_u (F_i(z))=\xi_i(z)\times e^{-\chf_i}z^{-\bof_{i,0}+1}q_1^{-\zef},\label{eq: free field rep1}\\
&\rho_u(K_i^{+}(z))=\varphi^+_i(q^{-\frac{1}{2}}z)\times q^{\bof_{i,0}}, \quad 
\rho_u(K_i^{-}(z))=\varphi^-_i(q^{-\frac{1}{2}}z)\times q^{-\bof_{i,0}}, \label{eq: free field rep2}\\
&\rho_u(q^{\pm c})=q^{\pm 1}.
\end{align}
\end{proposition}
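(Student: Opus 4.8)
The plan is to verify directly that $\rho_u(E_i(z))$, $\rho_u(F_i(z))$, $\rho_u(K_i^{\pm}(z))$ and $\rho_u(q^{\pm c})=q^{\pm 1}$ satisfy each defining relation \eqref{eq: def rel KK c}--\eqref{eq:serr2}. The relations \eqref{eq: def rel KK c} are clear; since the central element $q^c$ acts by the scalar $q$, every $q^{\pm c}$ in the remaining relations becomes $q^{\pm 1}$, and keeping track of the resulting argument shifts (e.g.\ $q^{(1\mp1)c/2}z$ becomes $z$ or $qz$) is the first bit of bookkeeping. One also notes the routine point that each generating series acts on $\mathcal{F}_u$ with well defined coefficients, because the positive modes $\alpha_{i,n}$ ($n>0$) act locally nilpotently on any Fock vector.

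First I would assemble the operator-product toolkit. Using only the commutators of $\mathcal{H}_q$ among the oscillators $\alpha_{i,n}$ ($n\ne 0$), each ordered product $\eta_i(z)\eta_j(w)$, $\eta_i(z)\xi_j(w)$, $\xi_i(z)\xi_j(w)$, $\varphi_i^{\pm}(z)\eta_j(w)$, $\varphi_i^{\pm}(z)\xi_j(w)$, $\varphi_i^{-}(z)\varphi_j^{+}(w)$ equals a scalar series in $w/z$ (the exponential of $\sum_{n>0}\tfrac1n(\cdots)(w/z)^n$) times the corresponding normal ordered product; summing each exponent gives an elementary rational function of $w/z$, whose poles, when present, sit at $z=q^{\pm 1}w$ (in $\eta_i\xi_i$) or at $z=q_1 w,\ q_3 w$ (in the off-diagonal $\eta\eta$ and $\xi\xi$). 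Next I would handle the zero mode dressings $e^{\pm\chf_i}z^{\pm\bof_{i,0}+1}q_1^{\pm\zef}$: moving one past another produces powers of $z/w$ governed by $[\bof_{i,0},\chf_j]=2$ for $i=j$ and $-2$ for $i\ne j$, and no phases from $[\zef,\zchf]=\beta$ (none of $E_i,F_i,K_i^{\pm}$ involves $\zchf$); the constraints $\bof_{1,0}=-\bof_{2,0}$, $\chf_1=-\chf_2$ then collapse the surviving zero mode factor to one exponential times a monomial in $z,w$. Combining the oscillator part, this monomial and the $q^c$-substitutions, the relations \eqref{eq: def rel KK}--\eqref{eq: def rel FF} reduce to identities of rational functions, which I would check case by case for $i=j$ and $i\ne j$ using $q_1q_2q_3=1$. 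The mechanism is uniform: the polynomials $g_{i,j}$ appearing in each relation are exactly what clears the denominators, after which the relation becomes an elementary polynomial identity.

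The heart of the matter is the $E$--$F$ relation \eqref{eq: def rel EF}. For $i\ne j$ the contraction of $\eta_i(z)\xi_j(w)$ is a polynomial in $w/z$; although the reverse order gives the polynomial in $z/w$, the accompanying zero mode monomials ($z^2$ versus $w^2$) turn both products into the same symmetric Laurent polynomial times a common operator, so $[E_i(z),F_j(w)]=0$. For $i=j$ the contraction of $\eta_i(z)\xi_i(w)$ is $\bigl((1-qw/z)(1-q^{-1}w/z)\bigr)^{-1}$, with simple poles at $z=q^{\pm 1}w$; the two orderings $E_i(z)F_i(w)$ and $F_i(w)E_i(z)$ differ precisely by the difference of the two expansions of $zw/\bigl((z-qw)(z-q^{-1}w)\bigr)$, which by partial fractions equals $\tfrac{1}{q-q^{-1}}\bigl(\delta(qw/z)-\delta(q^{-1}w/z)\bigr)$ times the remaining zero mode operator. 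Localizing each delta function: on the support of $\delta(qw/z)$ the normal ordered product $:\eta_i(qw)\xi_i(w):$ equals $\varphi_i^{+}(q^{1/2}w)$ and $(z/w)^{\bof_{i,0}}=q^{\bof_{i,0}}$, which reproduces $\rho_u(K_i^{+}(z))$; symmetrically $\delta(q^{-1}w/z)=\delta(qz/w)$ and $:\eta_i(q^{-1}w)\xi_i(w):$ equals $\varphi_i^{-}(q^{-1/2}w)$, which reproduces $\rho_u(K_i^{-}(w))$; together these give exactly \eqref{eq: def rel EF}. Extracting these delta functions and identifying their residues with the $\varphi_i^{\pm}$ is the first genuinely delicate step.

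Finally, for the cubic Serre relations \eqref{eq:serr1}, \eqref{eq:serr2} I would factor out the common normal ordered product of the four vertex operators and reduce each $q_2$-bracketed, symmetrized expression to an identity among rational functions in $z_1,z_2,z_3,w$: the off-diagonal contractions between $\eta_i$ and $\eta_j$ (resp.\ $\xi_i$ and $\xi_j$) supply the poles at $z_a=q_1 w,\ q_3 w$ or $z_a=q_1^{-1}w,\ q_3^{-1}w$ according to the position of $w$, and after clearing denominators one must show that the symmetrization over $z_1,z_2,z_3$ of the resulting polynomial vanishes. I expect this to be the main obstacle: it is the lengthiest computation, and because here $g_{i,j}(z,w)=(z-q_1w)(z-q_3w)$ is quadratic (rather than linear, as for adjacent nodes in type $\fgl_n$ with $n\ge3$) it must be carried out afresh. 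That said, the oscillator parts of all the relations above are formally parallel to the free field realizations of the quantum toroidal $\fgl_n$ algebra in \cite{Saito1998quantum,STU1998toroidal}, so the only essentially new content is the modified zero mode sector, whose consistency with each relation is precisely what the case-by-case checks of the second and third paragraphs confirm.
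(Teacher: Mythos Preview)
Your proposal is correct and follows essentially the same approach as the paper's proof in Appendix~\ref{sec: pf rep}: compute all pairwise operator products of the vertex operators (oscillator contractions plus zero-mode reorderings), read off relations \eqref{eq: def rel KK}--\eqref{eq: def rel FF} as rational-function identities, extract the delta functions in \eqref{eq: def rel EF} via partial fractions and identify the residues with $\varphi_i^{\pm}$, and reduce the Serre relations to the vanishing of a symmetrized rational function. The only point the paper makes slightly more explicit is that, for the Serre relation, the various orderings of $E_i(z_a)E_j(w)$ all converge simultaneously in the region $|q_1|,|q_3|<1$, $|z_a|=|w|$, so that the symmetrized combination can be computed as a single rational function before concluding it vanishes.
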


\begin{remark}
This representation holds even without the zero-mode condition $\bof_{1,0}=-\bof_{2,0}, \chf_1=-\chf_2$. 
This condition is imposed in order to ensure that \svir arises directly in the limit $q_i \rightarrow 1$ . 
\end{remark}

Although this representation is already known, 
for completeness we include the proof in Appendix \ref{sec: pf rep}. 
Using this free field realization, 
we consider the tensor representation of $E_i(z)$ on 
$\mathcal{F}_{u_1} \otimes \mathcal{F}_{u_2}$.
\begin{definition}
Set 
\begin{align}
X_i(z)= \rho_{u_1} \otimes \rho_{u_2} \circ \Delta (E_i(z)) \quad (i=1,2). 
\end{align}
\end{definition}

Explicitly, $X_i(z)$ is given by 
\begin{align}
&X_i(z)=\Lambda_{i,1}(z)+\Lambda_{i,2}(z), \\
&\Lambda_{i,1}(z) \equiv \left\{ \eta_i(z)\cdot e^{\chf_i}z^{\bof_{i,0}+1}q_1^{\zef}  \right\} \otimes 1,\\
&\Lambda_{i,2}(z) \equiv \left\{ \varphi_i(q^{1/2}z)\cdot q^{-\bof_{i,0}}  \right\}\otimes 
\left\{ \eta_i(q z) \cdot e^{\chf_i}(q z)^{\bof_{i,0}+1}q_1^{\zef} \right\}. 
\end{align}
We decompose the operator $X_i(z)$ 
into components corresponding to the Cartan subalgebra 
and components commuting with them.

\begin{definition}
%Define $\boc_{i,n}, \boo_{i,n} \in \widetilde{\mathcal{H}} \otimes \widetilde{\mathcal{H}}$ by 
Set
\begin{align}
&\boc_{i,n} = \rho_{u_1} \otimes \rho_{u_2} \circ \Delta (H_{i,n}),\\
&\boo_{i,n}=\bof_{i,n}\otimes 1
-\frac{[\bof_{i,n}\otimes 1,\boc_{i,-n}]}{[\boc_{i,n},\boc_{i,-n}]}\boc_{i,n}
\quad (n\neq 0).
\end{align}
Further we define the zero modes $\boc_{i,0},\,  \boo_{i,0},\,  \chc_i,\,  \cho_i$,
$\zec, \zeo$ and $\zchc ,\zcho$ by  
\begin{align}
&\boc_{i,0}=\frac{1}{2}(\bof_{i,0}\otimes 1+1\otimes\bof_{i,0}), \quad 
\boo_{i,0}=\frac{1}{2}(\bof_{i,0}\otimes 1-1\otimes\bof_{i,0}), \label{eq: decom of zero mode}\\
&\chc_i=\frac{1}{2}(\chf_i \otimes 1+1\otimes \chf_i), \quad  
\cho_i=\frac{1}{2}(\chf_i\otimes 1-1\otimes\chf_i), \\ 
&\zec=\frac{1}{2}(\zef \otimes 1+1\otimes \zef +1\otimes 1), \quad 
\zeo=\frac{1}{2}(\zef\otimes 1-1\otimes \zef -1\otimes 1),\\
&\zchc=\frac{1}{2}(\zchf \otimes 1+1\otimes \zchf),\quad 
\zcho=\frac{1}{2}(\zchf \otimes 1-1\otimes \zchf). 
\end{align}
\end{definition}

Note that $\boc_{1,0}=-\boc_{2,0},\;  \boo_{1,0}=-\boo_{2,0},  \;  \chc_1=-\chc_2,\; \cho_1 =-\cho_2$. 
The Cartan mode $\boc_{i,n}$ is of the form 
\begin{align}
\boc_{i,n}=-\frac{(1-q_2^n)}{n(q-q^{-1})}\left( \bof_{i,n}\otimes 1 +q^{|n|}\cdot 1\otimes \bof_{i,n} \right)\quad 
(n\neq 0). 
\end{align}
The component of $1\otimes \bof_{i,n}$ which commutes with the Cartan mode 
coincides with $\boo_{i,n}$ up to scalar multiplication. That is, we have
\begin{align}
-q^{-|n|}\boo_{i,n}=
1 \otimes \bof_{i,n}
-\frac{[1\otimes \bof_{i,n},\boc_{i,-n}]}{[\boc_{i,n},\boc_{i,-n}]}\boc_{i,n} \quad (n \in \mathbb{Z}_{\neq 0}). 
\end{align}
By direct calculations, we can obtain the following commutation relations. 

\begin{proposition}
It follows that 
\begin{align}
&[\boc_{i,n},\boc_{j,m}]=
\begin{cases}
\dfrac{q_2^{-2|n|}\left(1-q_2^{2|n|} \right)^2}{n (q-q^{-1})^2} \delta_{n+m,0}, & i=j,  \\ 
-\dfrac{q_2^{-|n|} \left(1-q_2^{|n|}\right)\left(1-q_2^{2|n|}\right) \left(q_1^{|n|}+q_3^{|n|} \right)}{n(q-q^{-1})^2} \delta_{n+m,0}, &i \neq j, 
\end{cases}\\ 
&[\boo_{i,n},\boo_{j,m}]=
\begin{cases}
n\delta_{n+m,0}, & i=j,  \\ 
-n \dfrac{q_1^n+q_3^n}{1+q_2^{-n}} \delta_{n+m,0}, &i \neq j, 
\end{cases}\qquad \quad 
[\boo_{i,n} , \boc_{j,m}]=0 \quad (\forall i, j), \\
&[\boo_{i,0}, \cho_j]=\begin{cases}
1 & i=j, \\ -1& i\neq j, 
\end{cases}\qquad \qquad 
%&z^{\boo_{i,0}}e^{\cho_i}=z e^{\cho_i}z^{\boo_{i,0}}, \quad 
%z^{\boo_{i,0}}e^{\cho_j}=z^{-1} e^{\cho_j}z^{\boo_{i,0}} \qquad (i\neq j), \\
[\zeo, \zcho]=\frac{\beta}{2}.
\end{align}
\end{proposition}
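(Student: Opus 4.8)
The plan is to prove the proposition by a direct computation from the definitions, working mode by mode. The whole statement amounts to evaluating brackets of explicit linear combinations of the operators $\bof_{i,n}\otimes 1$ and $1\otimes\bof_{j,n}$ (and of the zero modes $\bof_{i,0}$, $\chf_i$, $\zef$, $\zchf$) inside the tensor algebra $\mathcal{H}_q\otimes\mathcal{H}_q$, using only the commutation relations of $\mathcal{H}_q$ stated earlier and the coproduct formula (\ref{eq: Delta h}) for $H_{i,n}$. Since $\boc_{i,n}$ and $\boo_{i,n}$ are built from $\bof_{i,n}\otimes 1$ and $1\otimes\bof_{i,n}$ with scalar coefficients, every bracket is a scalar (a central element), so ``direct calculations'' really is the right method; the task is to organise it so the algebra is transparent.

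First I would record, for $n\neq 0$, the two building blocks in the first tensor factor. From (\ref{eq: Delta h}) with $\rho_{u_1}\otimes\rho_{u_2}$ and $\rho_u(q^{c})=q$, one gets $\boc_{i,n}=H_{i,n}\otimes 1+q^{n}\,1\otimes H_{i,n}$ for $n>0$ (and the mirror statement for $n<0$), and then substitutes the single-factor action $\rho_u(H_{i,n})$ read off from $\rho_u(K_i^{\pm}(z))=\varphi_i^{\pm}(q^{-1/2}z)q^{\pm\bof_{i,0}}$, namely $\rho_u(H_{i,n})=-\frac{1-q_2^{n}}{q-q^{-1}}q^{-n/2}\tfrac{1}{n}\bof_{i,n}$ for $n>0$ up to the usual normalisation, giving the stated formula $\boc_{i,n}=-\frac{1-q_2^{n}}{n(q-q^{-1})}(\bof_{i,n}\otimes 1+q^{|n|}\,1\otimes\bof_{i,n})$. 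Next I would compute $[\boc_{i,n},\boc_{j,-n}]$ by expanding into the four terms $[\bof_{i,n}\otimes 1,\bof_{j,-n}\otimes 1]$, $[\bof_{i,n}\otimes 1,1\otimes\bof_{j,-n}]=0$, $[1\otimes\bof_{i,n},\bof_{j,-n}\otimes 1]=0$, $[1\otimes\bof_{i,n},1\otimes\bof_{j,-n}]$, and insert the $\mathcal{H}_q$ relations $[\bof_{i,n},\bof_{i,-n}]=n(1+q_2^{-|n|})$ and $[\bof_{i,n},\bof_{j,-n}]=-n(q_1^{|n|}+q_3^{|n|})$ ($i\neq j$). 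For $i=j$ this yields $\frac{(1-q_2^{|n|})^2}{n^2(q-q^{-1})^2}(1+q^{2|n|})\cdot n(1+q_2^{-|n|})$, and simplifying $(1+q^{2|n|})(1+q_2^{-|n|})=(1+q_2^{|n|})(1+q_2^{-|n|})=q_2^{-|n|}(1+q_2^{|n|})^2$ (using $q_2=q^2$) produces exactly $\frac{q_2^{-2|n|}(1-q_2^{2|n|})^2}{n(q-q^{-1})^2}$; the $i\neq j$ case is the same bookkeeping with the cross-relation, and the factor $(1+q^{2|n|})(q_1^{|n|}+q_3^{|n|})$ regroups into the displayed $(1-q_2^{|n|})(1-q_2^{2|n|})(q_1^{|n|}+q_3^{|n|})/(1-q_2^{|n|})$ shape after the same $q_2$-manipulation.

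Then I would turn to $\boo_{i,n}$. The definition $\boo_{i,n}=\bof_{i,n}\otimes 1-\frac{[\bof_{i,n}\otimes 1,\boc_{i,-n}]}{[\boc_{i,n},\boc_{i,-n}]}\boc_{i,n}$ is exactly the Gram--Schmidt projection removing the $\boc_{i,n}$-component, so $[\boo_{i,n},\boc_{j,m}]=0$ is immediate for $i=j$ (by construction the coefficient is chosen to kill the bracket, using $[\boc_{i,n},\boc_{i,m}]=0$ unless $n+m=0$), and for $i\neq j$ it follows because the $i$-th and $j$-th modes in the second tensor leg still have nonzero cross-bracket — here one must check that the particular combination still annihilates $\boc_{j,\pm n}$; this reduces to the identity $[\bof_{i,n}\otimes1,\boc_{j,-n}][\boc_{i,n},\boc_{i,-n}]=[\bof_{i,n}\otimes1,\boc_{i,-n}][\boc_{j,n},\boc_{i,-n}]$, which one verifies by substituting the explicit coefficients (it holds because $\boc_{i,n}$ and $\boc_{j,n}$ are the \emph{same} linear combination of $\bof_{i,n}\otimes1$ and $1\otimes\bof_{i,n}$ up to an overall scalar, by the remark that $\boc_{1,0}=-\boc_{2,0}$ etc., and more to the point because the coproduct treats both colors symmetrically). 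For the self-bracket I would use the alternative expression $-q^{-|n|}\boo_{i,n}=1\otimes\bof_{i,n}-\frac{[1\otimes\bof_{i,n},\boc_{i,-n}]}{[\boc_{i,n},\boc_{i,-n}]}\boc_{i,n}$: since $\bof_{i,n}\otimes1+q^{|n|}\cdot1\otimes\bof_{i,n}$ is (proportional to) $\boc_{i,n}$, the two projected vectors differ by a multiple of $\boc_{i,n}$, consistent with both formulas; then $[\boo_{i,n},\boo_{j,-n}]$ is most cleanly obtained by computing $[\boo_{i,n},1\otimes\bof_{j,-n}]$ (since $\boo$ is orthogonal to $\boc$, one may replace $\boo_{j,-n}$ by $-q^{-|n|}(1\otimes\bof_{j,-n})$ plus a $\boc$-term that drops out), and $[\bof_{i,n}\otimes1-(\cdots)\boc_{i,n},\,1\otimes\bof_{j,-n}]$ expands to $-(\cdots)\,q^{|n|}\cdot(\text{single-leg bracket in color }j)$; plugging in the coefficient $\frac{[\bof_{i,n}\otimes1,\boc_{i,-n}]}{[\boc_{i,n},\boc_{i,-n}]}$ and cancelling the large $(1-q_2^{\bullet})/(q-q^{-1})$ factors against those inside $[\boc_{i,n},\boc_{i,-n}]$ collapses everything to $n\delta_{n+m,0}$ for $i=j$ and $-n\frac{q_1^n+q_3^n}{1+q_2^{-n}}\delta_{n+m,0}$ for $i\neq j$. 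Finally the zero-mode brackets $[\boo_{i,0},\cho_j]$ and $[\zeo,\zcho]$ come straight from (\ref{eq: decom of zero mode}) and the definitions of $\chc_i,\cho_i,\zec,\zeo,\zchc,\zcho$ together with $[\bof_{i,0},\chf_i]=2$, $[\bof_{i,0},\chf_j]=-2$ ($i\neq j$), $[\zef,\zchf]=\beta$: e.g. $[\boo_{i,0},\cho_i]=\tfrac14[\bof_{i,0}\otimes1-1\otimes\bof_{i,0},\chf_i\otimes1-1\otimes\chf_i]=\tfrac14(2+2)=1$, and $[\zeo,\zcho]=\tfrac14[\zef\otimes1-1\otimes\zef,\zchf\otimes1-1\otimes\zchf]=\tfrac14(\beta+\beta)=\tfrac{\beta}{2}$, with the mixed $\boc/\boo$ zero-mode brackets vanishing by the same orthogonality.

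The main obstacle I expect is purely clerical rather than conceptual: keeping the $q$, $q_2=q^2$, $q_1,q_3$ bookkeeping consistent through the cancellations — in particular verifying the two simplifications $(1+q^{2|n|})(1+q_2^{-|n|})=q_2^{-|n|}(1+q_2^{|n|})^2$ and the way $(1-q_2^{|n|})(1-q_2^{2|n|})$ appears in the $i\neq j$ Cartan bracket — and, for the cross-color cases, checking that the single projection coefficient (defined only with the color-$i$ Cartan mode) nevertheless also projects out $\boc_{j,\pm n}$. This last point is the one place where one genuinely uses the zero-mode identifications $\boc_{1,0}=-\boc_{2,0}$, $\cho_1=-\cho_2$ and the color-symmetry of the coproduct; once that symmetry is spelled out, all four families of relations fall out of the same half-page of substitution.
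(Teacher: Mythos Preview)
Your proposal is correct and follows exactly the approach the paper indicates: the paper offers no proof beyond the sentence ``By direct calculations, we can obtain the following commutation relations,'' and what you describe is precisely that direct calculation, organised cleanly via the Gram--Schmidt interpretation of $\boo_{i,n}$. The one place where your explanation is slightly loose is the cross-color orthogonality $[\boo_{i,n},\boc_{j,-n}]=0$ for $i\neq j$: the reason is not that $\boc_{i,n}$ and $\boc_{j,n}$ involve the same $\bof$'s, but rather that the ratio $[\bof_{i,n},\bof_{j,-n}]/[\bof_{i,n},\bof_{i,-n}]=-\frac{q_1^{|n|}+q_3^{|n|}}{1+q_2^{-|n|}}$ coincides with $[\boc_{i,n},\boc_{j,-n}]/[\boc_{i,n},\boc_{i,-n}]$ (both equal $-\frac{q_1^{|n|}+q_3^{|n|}}{1+q_2^{-|n|}}$ after your $q_2$-simplification), which is what your ``coproduct treats both colors symmetrically'' remark is pointing at; once that one-line check is written out, the argument is complete.
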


By setting 
\begin{align}
&\Lamo_{i,1}(z)=:\exp\left( -\sum_{n\neq 0} \frac{1}{n}\, \boo_{i,n}z^{-n}\right):
e^{\cho_i}z^{\boo_{i,0}}q_1^{\zeo }, \\
&\Lamo_{i,2}(z)=:\exp\left( \sum_{n\neq 0} \frac{q_2^{-n}}{n}\boo_{i,n}z^{-n}\right):
e^{-\cho_i}  z^{-\boo_{i,0}}q^{-2\boo_{i,0}}q_1^{-\zeo}, \\
&\Lamc_{i}(z)=
:\exp\left( -\sum_{n> 0}\frac{(q-q^{-1}) q_2^n}{(1-q_2^{2n})}\boc_{i,-n}z^{n}\right) 
\exp\left( \sum_{n>0}\frac{(q-q^{-1})}{(1-q_2^{2n})}\boc_{i,n}z^{-n}\right) :
e^{\chc_i}z^{\boc_{i,0}+1} q_1^{\zec},
\end{align}
we can decompose $X_i(z)$ into the Cartan part $\Lamc_{i}(z)$ and the component $\Lamo_{i,k}(z)$ 
which commutes with it. 
That is to say, we obtain  
\begin{align}
&X_i(z)=\left( \Lamo_{i,1}(z) + \Lamo_{i,2}(z)\right)\Lamc_i(z).
\end{align}

In the following, we decouple the Cartan part $\Lamc_{i}(z)$ 
and study the algebra generated by the $\Lamo_{i,k}(z)$.
In doing so, taking into account the symmetry, we employ normalized generators defined as follows.

\begin{definition}\label{def: W}
Set 
\begin{align}
W_i(z)&=\Lambda^{+}_i(z)+\Lambda^{-}_i(z)\qquad (i=1,2), \\
\Lambda^{+}_i(z)&
=:\exp\left( -\sum_{n\neq 0}\frac{q^n}{n} \boo_{i,n}z^{-n}\right):
e^{\cho_i}(q^{-1}z)^{\boo_{i,0}}q_1^{\zeo},\\
\Lambda^{-}_i(z)&
=:\exp\left( \sum_{n\neq 0}\frac{q^{-n}}{n} \boo_{i,n}z^{-n}\right):
e^{-\cho_i}(qz)^{-\boo_{i,0}}q_1^{-\zeo}. 
\end{align}
\end{definition}

These are the main generators of study in this paper. 
Note that we have 
\begin{align}
\Lamo_{i,1}(z) + \Lamo_{i,2}(z)= W_i(q z). 
\end{align}

\begin{remark}
Even if we start from the generator $F_i(z)$ and perform the same computation, 
the resulting operator is again precisely $W_i(z)$. That is, 
it follows that 
\begin{align}
\rho_{u_1}\otimes \rho_{u_2}\circ \Delta(F_i(z))=W_{i}(q z) \cdot  \Lamc^{\ast}_{i}(z). 
\end{align}
Here, we set 
\begin{align}
&\Lamc^{\ast}_{i}(z)=
:\exp\left( -\sum_{n> 0}\frac{(q-q^{-1})}{(1-q_2^{-2n})}\boc_{i,-n}z^{n}\right) 
\exp\left( \sum_{n>0}\frac{(q-q^{-1})q_2^{-n}}{(1-q_2^{-2n})}\boc_{i,n}z^{-n}\right) :
e^{-\chc_i}z^{-\boc_{i,0}+1} q_1^{-\zec+1}. 
\end{align}
\end{remark}

\section{Limit $q_i \rightarrow 1$}\label{sec: lim}

In this section, we show that the super Virasoro algebra \svir arises in the limit $q_i \to 1$ 
of the generator $W_i(z)$. 
To this end, 
we begin by reviewing the free field realization and the screening currents of \svir, 
following the formulation by Kato and Matsuda \cite{KM1986null}. 
While their realization employs one free bosonic field and one free fermionic field, 
our approach relies on the boson--fermion correspondence to reformulate the entire structure purely in terms of bosons.
Throughout the discussion, we restrict our attention to the Neveu--Schwarz sector.

\subsection{Super Virasoro algebra}

Consider the Heisenberg algebra $\mathcal{H}$ 
generated by $\bon_n, \chn$ and $\bons_n$, $\chns$ ($n \in \mathbb{Z}$), 
subject to the commutation relations 
\begin{align}
&[\bon_{n}, \bon_{m}]=[\bons_{n}, \bons_{m}]=n \delta_{n+m,0},\qquad 
[\bon_{n}, \bons_{m}]=0,  \label{eq: bon rel 1}\\
&[\bon_n , \chn]=[\bons_n, \chns]=\delta_{n,0}, \qquad [\bon_n,\chns]=[\bons_n,\chn]=0.\label{eq: bon rel 2}
\end{align}
Define the generating series (free bosonic fields) $\phi(z)$ and $\widetilde{\phi}(z)$ as
\begin{align}
&\phi(z) =\chn+\bon_0 \log z -\sum_{n\neq 0} \frac{1}{n} \bon_{n}z^{-n}, \\
&\widetilde{\phi}(z)=\chns+\bons_0 \log z -\sum_{n\neq 0} \frac{1}{n} \bons_{n}z^{-n}.
\end{align}
We apply the boson-fermion correspondence to the free bosonic field $\phi(z)$
and identify it with a pair of fermionic fields. 
Specifically, we define the generating series (free fermionic fields) 
\begin{align}
\psi(z)=\sum_{\mu \in \mathbb{Z}+\frac{1}{2}} \psi_{\mu} z^{-\mu-\frac{1}{2}}, \quad 
\widetilde{\psi}(z)=\sum_{\mu \in \mathbb{Z}+\frac{1}{2}} \widetilde{\psi}_{\mu} z^{-\mu-\frac{1}{2}}
\end{align}
via the correspondence 
\begin{align}\label{eq: realize psi}
\psi(z)=\frac{1}{\sqrt{-2}}\left(: e^{\phi(z)}: - :e^{-\phi(z)}: \right), \quad 
\widetilde{\psi}(z)=\frac{1}{\sqrt{2}}\left(: e^{\phi(z)}: + :e^{-\phi(z)}: \right).
\end{align}
Here and hereafter, we use the same normal ordering symbol $:\ast:$ for the Heisenberg algebra $\mathcal{H}$ 
as for $\mathcal{H}_q$. 
These fermions satisfy the canonical anticommutation relations:
\begin{align}
&[\psi_{\mu}, \psi_{\nu}]_{+}=[\widetilde{\psi}_{\mu}, \widetilde{\psi}_{\nu}]_{+}=\delta_{\mu+\nu,0}, \quad 
[\widetilde{\psi}_{\mu}, \psi_{\nu}]_{+}=0, 
\end{align}
where $[A, B]_+ =AB+BA$ denotes the anticommutator.

By using one bosonic and one fermionic field, 
we can construct a free field realization of \svir. 

\begin{definition}\label{def: SCA rep}
Let $\sigma$ be a complex parameter, and set 
\begin{align}
& T(z)=\frac{1}{2} :\widetilde{\phi}^{\prime}(z)^2:+\sigma \widetilde{\phi}^{\prime \prime}(z)
+\frac{1}{2} \NPb \psi^{\prime}(z) \psi(z) \NPb , \label{eq: rep of T(z)}\\
& G(z)=: \widetilde{\phi}^{\prime}(z): \psi(z)+2 \sigma \psi^{\prime}(z). \label{eq: rep of G(z)}
\end{align}
Here, $\NPb\ast\NPb$ is the normal ordering for the fermionic modes defined by 
\begin{align}
\NPb \psi_\mu \psi_{\nu} \NPb=
\begin{cases}
\psi_{\mu} \psi_{\nu}& \mu \geq \nu, \\
-\psi_{\nu} \psi_{\mu}& \mu < \nu. \\
\end{cases}
\end{align}
In $\widetilde{\phi}'(z)$ and $\widetilde{\psi}'(z)$, 
the prime symbol indicates differentiation with respect to $z$. 
Moreover, define the expansion coefficients $L_{n}$ and $G_{\mu}$ by 
\begin{align}
& T(z)=\sum_{n \in \mathbb{Z}} L_n z^{-n-2}, \quad  G(z)=\sum_{\mu \in \mathbb{Z}+\frac{1}{2}} G_{\mu} z^{-\mu-\frac{3}{2}}.
\label{eq: generating fn of T G}
\end{align}
\end{definition}

\begin{fact}
The operators 
$G_{\mu}$ and $L_n$ ($\mu \in \mathbb{Z}+\frac{1}{2}, n\in \mathbb{Z}$) satisfy the relations of 
\svir with central charge $C=\dfrac{3}{2} - 12\sigma^2 $:  
\begin{align}
& {\left[L_n, L_m\right]=(n-m) L_{n+m}+\frac{C}{12}\left(n^3-n\right) \delta_{n+m, 0}}, \label{eq: def rel sVir1}\\
& {\left[L_n, G_{\mu} \right]=\left(\frac{n}{2}-\mu\right) G_{n+\mu}}, \label{eq: def rel sVir2}\\
& {\left[G_{\mu}, G_{\nu} \right]_{+}=2 L_{\mu+\nu}+\frac{C}{3}\left(\mu^2-\frac{1}{4}\right) \delta_{\mu+\nu, 0}}. \label{eq: def rel sVir3}
\end{align}
\end{fact}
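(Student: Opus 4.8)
This is the usual free-field verification, so I would prove it by computing the three defining operator product expansions of $T(z)$ and $G(z)$ and then passing to modes through the standard contour dictionary. The starting point is the decomposition $T(z)=T_B(z)+T_F(z)$, where
\begin{align}
T_B(z)=\tfrac12:(\widetilde{\phi}'(z))^2:+\sigma\,\widetilde{\phi}''(z),\qquad
T_F(z)=\tfrac12\,\NPb\psi'(z)\psi(z)\NPb,
\end{align}
together with the observation that $\widetilde{\phi}$ and $\psi$ act on commuting tensor factors, so the only contractions that ever occur are
\begin{align}
\widetilde{\phi}'(z)\,\widetilde{\phi}'(w)\sim\frac{1}{(z-w)^2},\qquad
\psi(z)\,\psi(w)\sim\frac{1}{z-w},
\end{align}
which follow from (\ref{eq: bon rel 1}) and from the anticommutation relations for $\psi_\mu$. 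Everything then reduces to Wick's theorem, with the Koszul sign rule applied to the fermionic contractions.

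For $T(z)T(w)$: the self-contraction of the term $\tfrac12:(\widetilde{\phi}')^2:$ produces the quartic-pole coefficient $\tfrac12$, the double contraction $\sigma^2\widetilde{\phi}''(z)\widetilde{\phi}''(w)\sim-6\sigma^2(z-w)^{-4}$ lowers it by $6\sigma^2$, and the mixed contractions of $:(\widetilde{\phi}')^2:$ with $\widetilde{\phi}''$ feed only the lower poles; this gives the Virasoro OPE for $T_B$ with $c_B=1-12\sigma^2$. The free-fermion part $T_F$ is the standard $c_F=\tfrac12$ stress tensor, and the $T_B$--$T_F$ cross term is regular. Adding the two yields a Virasoro OPE with $C=c_B+c_F=\tfrac32-12\sigma^2$, which is (\ref{eq: def rel sVir1}) in mode form. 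For $T(z)G(w)$ I would then verify that $G$ is a Virasoro primary of weight $\tfrac32$: contracting $\sigma\widetilde{\phi}''(z)$ against $\widetilde{\phi}'(w)$ inside $\widetilde{\phi}'(w)\psi(w)$ creates an anomalous term $-2\sigma\,\psi(w)(z-w)^{-3}$, while acting with $T_F(z)$ on the descendant $2\sigma\psi'(w)$ creates exactly $+2\sigma\,\psi(w)(z-w)^{-3}$, so the cubic pole cancels; the double and simple poles then assemble into $\tfrac32 G(w)(z-w)^{-2}+\partial G(w)(z-w)^{-1}$. This explains the counterterm $2\sigma\psi'$ in the definition of $G$ and gives (\ref{eq: def rel sVir2}).

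The remaining computation is the anticommutator OPE $G(z)G(w)$. Wick's theorem applied to $\widetilde{\phi}'(z)\psi(z)\cdot\widetilde{\phi}'(w)\psi(w)$ gives: the complete double contraction $(z-w)^{-3}$; the single $\psi$-contraction $(z-w)^{-1}:\widetilde{\phi}'(z)\widetilde{\phi}'(w):$, whose Taylor expansion about $w$ yields $:(\widetilde{\phi}'(w))^2:(z-w)^{-1}+\cdots$; and the single $\widetilde{\phi}'$-contraction $(z-w)^{-2}\NPb\psi(z)\psi(w)\NPb$, which starts only at order $(z-w)^{-1}$ because $\NPb\psi(w)\psi(w)\NPb=0$. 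Adding the contractions that involve $2\sigma\psi'$ — the two cross terms combine to $2\sigma\widetilde{\phi}''(w)(z-w)^{-1}$, and $4\sigma^2\psi'(z)\psi'(w)\sim-8\sigma^2(z-w)^{-3}$ — the stray $\widetilde{\phi}''$ contributions cancel, the simple pole collapses to $2T(w)(z-w)^{-1}$, and the cubic pole is $(1-8\sigma^2)(z-w)^{-3}=\tfrac{2C}{3}(z-w)^{-3}$ with the same $C$; there is no double pole, as required. Converting this OPE to modes through $[G_\mu,G_\nu]_{+}=\oint_0\tfrac{dw}{2\pi i}\,w^{\nu+1}\oint_w\tfrac{dz}{2\pi i}\,z^{\mu+1}\,G(z)G(w)$ reproduces (\ref{eq: def rel sVir3}), and the analogous contour formulas with commutators reproduce (\ref{eq: def rel sVir1}) and (\ref{eq: def rel sVir2}).

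\textbf{Main obstacle.}
The argument is routine in outline; the real work lies in the bookkeeping. The delicate points are the fermion signs in Wick's theorem for the bilinears $\NPb\psi'\psi\NPb$ and in the anticommutator (not commutator) contour formula for $G$--$G$; the precise cancellations that hinge on the counterterm $2\sigma\psi'$, first of the cubic pole in $T(z)G(w)$ and then of the spurious $\widetilde{\phi}''$ terms in $G(z)G(w)$; and keeping the two normal orderings ($:\ast:$ for $\mathcal{H}$ and $\NPb\ast\NPb$ for the fermion modes) and the radial ordering inside OPEs mutually consistent, in particular when Taylor-expanding $:\widetilde{\phi}'(z)\widetilde{\phi}'(w):$ and $\NPb\psi(z)\psi(w)\NPb$ about $w$. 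One could equivalently carry out the whole calculation directly in modes, but then the infinite normal-ordering sums require careful regularization; I would prefer the OPE route for transparency.
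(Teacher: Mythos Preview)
Your plan is the standard free-field computation and is correct in substance. The paper, however, does not prove this statement at all: it is labelled a \emph{Fact} and simply cited from Kato--Matsuda \cite{KM1986null}, so there is no ``paper's own proof'' to compare against. Your OPE-and-contour argument is precisely the textbook verification one would expect for this kind of free-field realization.

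One small point of wording: in the $G(z)G(w)$ calculation you say ``the stray $\widetilde{\phi}''$ contributions cancel''. In fact the single $\widetilde{\phi}''$ term produced by the two cross contractions,
\[
2\sigma\bigl(\widetilde{\phi}'(z)-\widetilde{\phi}'(w)\bigr)(z-w)^{-2}\ \sim\ 2\sigma\,\widetilde{\phi}''(w)\,(z-w)^{-1},
\]
does not cancel against anything; it is exactly the $2\sigma\widetilde{\phi}''(w)$ piece needed inside $2T(w)$ on the simple pole. So the phrasing should be that this term \emph{completes} $2T(w)$ rather than cancels. This does not affect the correctness of the argument, only the exposition.
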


In terms of the free bosonic fields, the currents $T(z)$ and $G(z)$ can be written as follows.  

\begin{proposition}\label{prop: TG boson}
We have 
\begin{align}
&T(z) = :\frac{1}{2} \widetilde{\phi}^{\prime}(z)^2+\sigma \widetilde{\phi}^{\prime \prime}(z)
+\frac{1}{4} \left( \phi'(w)^2 -e^{2\phi(w)} -e^{-2\phi(w)} \right):,\label{eq: T boson}\\
&G(z)=:\frac{1}{\sqrt{-2}} \, \widetilde{\phi}'(z) \left( e^{ \phi(z)}-e^{ -\phi(z)} \right)
-\sqrt{-2}\, \sigma \phi'(z) \left( e^{\phi(z)} + e^{- \phi(z)}\right):. \label{eq: G boson}
\end{align}
\end{proposition}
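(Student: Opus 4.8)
The plan is to substitute the boson--fermion correspondence (\ref{eq: realize psi}) into the free-field formulas (\ref{eq: rep of T(z)}) and (\ref{eq: rep of G(z)}) and reduce everything to bosonic normal-ordered expressions, using the operator-product formulas for the vertex operators $:e^{\pm\phi(z)}:$ collected in Appendix~\ref{sec: fml bf corresp}. Since the boson $\widetilde{\phi}$ commutes with $\phi$, no contractions mix the two sectors, so the pieces $\tfrac12:\widetilde{\phi}'(z)^2:$ and $\sigma\widetilde{\phi}''(z)$ of $T(z)$, and the piece $:\widetilde{\phi}'(z):\psi(z)$ of $G(z)$, already take the stated form once $\psi$ is rewritten; the only genuinely nontrivial point is the bosonization of the purely fermionic bilinear $\tfrac12\NPb\psi'(z)\psi(z)\NPb$ in $T(z)$.

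For $G(z)$ the computation is immediate. Using $\partial_z:e^{\pm\phi(z)}: = \pm:\phi'(z)e^{\pm\phi(z)}:$ we get $\psi'(z) = \tfrac{1}{\sqrt{-2}}:\phi'(z)(e^{\phi(z)}+e^{-\phi(z)}):$, while $:\widetilde{\phi}'(z):\psi(z) = \tfrac{1}{\sqrt{-2}}:\widetilde{\phi}'(z)(e^{\phi(z)}-e^{-\phi(z)}):$ because $\widetilde{\phi}$ and $\phi$ commute; adding $2\sigma\psi'(z)$ and using $2/\sqrt{-2} = -\sqrt{-2}$ yields (\ref{eq: G boson}).

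For the fermionic term in $T(z)$ I would use point splitting. The product $\psi(z)\psi(w)$ has OPE singularity $(z-w)^{-1}$, so $\NPb\psi'(z)\psi(z)\NPb = \lim_{w\to z}\big(\psi'(z)\psi(w) + (z-w)^{-2}\big)$ with $\psi'(z)\psi(w) = \partial_z\big(\psi(z)\psi(w)\big)$. Substituting (\ref{eq: realize psi}) expresses $\psi(z)\psi(w)$ as a combination of the four vertex-operator products $:e^{\varepsilon\phi(z)}::e^{\varepsilon'\phi(w)}:$ with $\varepsilon,\varepsilon'\in\{+,-\}$: the two diagonal ones are regular, equal to $(z-w):e^{\pm(\phi(z)+\phi(w))}:$, and after differentiating and letting $w\to z$ contribute $:e^{\pm2\phi(z)}:$, while the two off-diagonal ones, $(z-w)^{-1}:e^{\pm(\phi(z)-\phi(w))}:$, carry the pole; Taylor expanding their normal-ordered exponentials about $w$ produces the divergent piece that cancels the regulator $(z-w)^{-2}$ together with finite contributions built from $:\phi''(z):$ and $:\phi'(z)^2:$, and the $\phi''$ parts cancel between the two. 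The outcome is $\NPb\psi'(z)\psi(z)\NPb = \tfrac12:\phi'(z)^2: - \tfrac12:e^{2\phi(z)}: - \tfrac12:e^{-2\phi(z)}:$, so $\tfrac12\NPb\psi'(z)\psi(z)\NPb$ yields the last three terms of (\ref{eq: T boson}); since $\widetilde{\phi}$ and $\phi$ commute, the various normal orderings combine into the single one written there.

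The main obstacle is the careful bookkeeping in this last step: identifying the fermionic normal ordering $\NPb\cdot\NPb$ with the bosonic one $:\cdot:$ through the point-splitting limit, and keeping track of the cocycle factors that make $:e^{\phi(z)}:$ and $:e^{-\phi(z)}:$ genuinely anticommuting fields. These are precisely the facts established in Appendix~\ref{sec: fml bf corresp}, so once they are in hand the proposition follows from the substitutions and Taylor expansions sketched above.
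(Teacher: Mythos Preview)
Your argument is correct and follows essentially the same route as the paper: the $G(z)$ identity is obtained by direct substitution of (\ref{eq: realize psi}), and the fermion bilinear in $T(z)$ is handled by the point-splitting computation that the paper packages as Lemma~\ref{lem: fml BF corresp}. You have reproduced that lemma's proof inline rather than citing it, but the content is identical.
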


\begin{proof}
(\ref{eq: G boson}) can be immediately shown by a direct calculation. 
(\ref{eq: T boson}) can be obtained by Lemma \ref{lem: fml BF corresp} in Appendix \ref{sec: fml bf corresp}. 
\end{proof}

The screening currents can be constructed as follows. 
They commute with \svir up to total derivatives.

\begin{definition}\label{def: svir screening}
Set $t_{\pm}=\sigma \pm \sqrt{\sigma^2+1}$. The screening currents $\mathsf{S}^{\pm}(z)$ are defined by 
\begin{align}
\mathsf{S}^{\pm}(z)= t_{\pm} \cdot \psi(z) :e^{t_{\pm} \widetilde{\phi}(z)}:. 
\end{align}
\end{definition}

\begin{fact}[\cite{KM1986null}]
It follows that 
\begin{align}
&[L_n, \mathsf{S}^{\pm}(z)]=\frac{\partial}{\partial z} \left( z^{n+1} \mathsf{S}^{\pm}(z)\right), \\
&[G_n, \mathsf{S}^{\pm}(z)]_+=\frac{\partial}{\partial z} \left( z^{n+\frac{1}{2}} \, e^{t_{\pm} \widetilde{\phi}(z) } \right).
\end{align}
\end{fact}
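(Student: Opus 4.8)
The plan is to translate both asserted identities into operator product expansions (OPEs) of the currents $T(z)$ and $G(z)$ with $\mathsf{S}^{\pm}(w)$, and then extract the mode relations by the standard contour argument. The $L_n$ relation is equivalent to the assertion that $\mathsf{S}^{\pm}(z)$ is a primary field of conformal weight $1$ for $T(z)$, i.e. $T(w)\mathsf{S}^{\pm}(z)\sim\frac{\mathsf{S}^{\pm}(z)}{(w-z)^2}+\frac{\partial_z\mathsf{S}^{\pm}(z)}{w-z}$, since then $[L_n,\mathsf{S}^{\pm}(z)]=\oint_{C_z}\frac{dw}{2\pi i}\,w^{n+1}\,T(w)\mathsf{S}^{\pm}(z)=(n+1)z^n\mathsf{S}^{\pm}(z)+z^{n+1}\partial_z\mathsf{S}^{\pm}(z)=\partial_z\bigl(z^{n+1}\mathsf{S}^{\pm}(z)\bigr)$. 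Likewise the $G_n$ relation follows from $G(w)\mathsf{S}^{\pm}(z)\sim\frac{e^{t_{\pm}\widetilde{\phi}(z)}}{(w-z)^2}+\frac{\partial_z e^{t_{\pm}\widetilde{\phi}(z)}}{w-z}$ via $[G_n,\mathsf{S}^{\pm}(z)]_{+}=\oint_{C_z}\frac{dw}{2\pi i}\,w^{n+1/2}\,G(w)\mathsf{S}^{\pm}(z)$; the \emph{anti}commutator (rather than a commutator) is forced because $G(w)$ and $\mathsf{S}^{\pm}(z)$ are both odd, so the radially ordered product picks up a Koszul sign. For the OPE computations I would use $T,G$ as written in Definition \ref{def: SCA rep}, treating the fermion $\psi$ directly via its anticommutation relations rather than re-bosonizing: the $\widetilde{\phi}$-sector commutes with the $(\phi,\psi)$-sector, and inside $\mathsf{S}^{\pm}(z)=t_{\pm}\,\psi(z)\,:e^{t_{\pm}\widetilde{\phi}(z)}:$ only the single fermion $\psi(z)$ participates, so every contraction is elementary. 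The sole arithmetic input is $t_{\pm}=\sigma\pm\sqrt{\sigma^2+1}$, which I rewrite as $t_{\pm}^2-2\sigma t_{\pm}-1=0$, equivalently $\tfrac12 t_{\pm}^2-\sigma t_{\pm}=\tfrac12$.

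For the $T$-OPE, split $T=T_{\widetilde{\phi}}+T_{\psi}$ with $T_{\widetilde{\phi}}(w)=\tfrac12:\widetilde{\phi}'(w)^2:+\sigma\widetilde{\phi}''(w)$ and $T_{\psi}(w)=\tfrac12\NPb\psi'(w)\psi(w)\NPb$. The standard background-charge vertex OPE gives $T_{\widetilde{\phi}}(w):e^{t_{\pm}\widetilde{\phi}(z)}:$ with double-pole coefficient $h_{\widetilde{\phi}}=\tfrac12 t_{\pm}^2-\sigma t_{\pm}$ and single-pole coefficient $\partial_z:e^{t_{\pm}\widetilde{\phi}(z)}:$, and no higher-order pole (the $\sigma\widetilde{\phi}''$ term merely shifts the weight); the free-fermion stress tensor gives $T_{\psi}(w)\psi(z)\sim\frac{\psi(z)/2}{(w-z)^2}+\frac{\partial_z\psi(z)}{w-z}$. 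Since the two sectors commute, $T(w)\mathsf{S}^{\pm}(z)$ then has double pole $(h_{\widetilde{\phi}}+\tfrac12)\,\mathsf{S}^{\pm}(z)$ and single pole $\partial_z\mathsf{S}^{\pm}(z)$, with no triple pole, and the quadratic relation forces $h_{\widetilde{\phi}}=\tfrac12$, so the total weight is exactly $1$ and the $L_n$ case is done.

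For the $G$-OPE, use $G(w)=\widetilde{\phi}'(w)\psi(w)+2\sigma\psi'(w)$ and the elementary contractions $\psi(w)\psi(z)\sim\frac1{w-z}$, $\psi'(w)\psi(z)\sim-\frac1{(w-z)^2}$, $\widetilde{\phi}'(w):e^{t_{\pm}\widetilde{\phi}(z)}:\sim\frac{t_{\pm}}{w-z}:e^{t_{\pm}\widetilde{\phi}(z)}:$. In $G(w)\cdot t_{\pm}\psi(z):e^{t_{\pm}\widetilde{\phi}(z)}:$ the double pole receives $t_{\pm}^2$ from the simultaneous contraction of $\widetilde{\phi}'(w)$ with $e^{t_{\pm}\widetilde{\phi}(z)}$ and of $\psi(w)$ with $\psi(z)$ in the first summand, plus $-2\sigma t_{\pm}$ from the double contraction of $\psi'(w)$ with $\psi(z)$ in the second, so by the quadratic relation its coefficient is $(t_{\pm}^2-2\sigma t_{\pm})e^{t_{\pm}\widetilde{\phi}(z)}=e^{t_{\pm}\widetilde{\phi}(z)}$; the single pole comes from contracting $\psi(w)$ with $\psi(z)$ while leaving $\widetilde{\phi}'(w)$ uncontracted and letting $w\to z$, producing $t_{\pm}:\widetilde{\phi}'(z)e^{t_{\pm}\widetilde{\phi}(z)}:=\partial_z e^{t_{\pm}\widetilde{\phi}(z)}$, while the remaining term $\frac{t_{\pm}^2}{w-z}\NPb\psi(w)\psi(z)\NPb e^{t_{\pm}\widetilde{\phi}(z)}$ is regular at $w=z$ because $\NPb\psi(w)\psi(z)\NPb\to\NPb\psi(z)\psi(z)\NPb=0$. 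This gives the claimed OPE and hence $[G_n,\mathsf{S}^{\pm}(z)]_{+}=(n+\tfrac12)z^{n-1/2}e^{t_{\pm}\widetilde{\phi}(z)}+z^{n+1/2}\partial_z e^{t_{\pm}\widetilde{\phi}(z)}=\partial_z\bigl(z^{n+1/2}e^{t_{\pm}\widetilde{\phi}(z)}\bigr)$.

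\textbf{Main obstacle.} The contractions themselves are routine Wick computations; the real care lies in the bookkeeping: keeping all fermionic minus signs and the Koszul sign in the radial ordering of $G(w)\mathsf{S}^{\pm}(z)$ mutually consistent so that exactly the asserted anticommutator emerges, and verifying that the potentially dangerous single-pole term in the $G$-OPE (and any triple pole in the $T$-OPE) genuinely vanishes at coincident points. Once that is settled, the only nontrivial ingredient is the quadratic relation satisfied by $t_{\pm}$, which is precisely what simultaneously fixes the conformal weight of $\mathsf{S}^{\pm}(z)$ at $1$ and the double-pole coefficient in the $G$-OPE at $1$. Alternatively one could run the entire calculation from the bosonized forms (\ref{eq: T boson})--(\ref{eq: G boson}) of Proposition \ref{prop: TG boson} together with the correspondence (\ref{eq: realize psi}), but then the cocycle factors of the boson--fermion dictionary must be carried through, which is the more painful route.
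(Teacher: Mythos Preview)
Your argument is correct and is the standard OPE derivation of this classical fact. Note, however, that the paper does not actually prove this statement: it is recorded as a \textbf{Fact} with a citation to Kato--Matsuda \cite{KM1986null} and no proof is given, so there is nothing in the paper to compare your approach against. Your use of the quadratic relation $t_{\pm}^2-2\sigma t_{\pm}=1$ to pin both the conformal weight of $\mathsf{S}^{\pm}$ at $1$ and the double-pole coefficient in the $G$-OPE at $1$ is exactly the mechanism Kato--Matsuda exploit, and your bookkeeping of the fermionic signs and of the potentially dangerous single-pole terms is accurate. One cosmetic remark: in the paper's statement the index on $G_n$ is really a half-integer (the modes are $G_\mu$ with $\mu\in\mathbb{Z}+\tfrac12$ in the NS sector), so your contour weight $w^{n+1/2}$ is correct once $n$ is understood as such.
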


\subsection{Limit $q_i\rightarrow 1$}\label{subsec: lim}

We now consider the limit $q_i\rightarrow 1$. 
To take this limit, we set $\hbar =\log q_1$ and parametrize $q_1, q_2, q_3$ as 
\begin{align}
q_1= e^{\hbar}, \quad q_2=e^{(\beta-1)\hbar},\quad q_3=e^{-\beta \hbar}. 
\end{align} 
We study the limit $\hbar \rightarrow 0$ with $\beta$ fixed under this parametrization. 
Since the commutation relations among the generators $\boo_{i,n}$, $\cho$, $\zeo$, $\zcho$ depend on the parameters $q_1, q_2, q_3$, 
we identify them with the Heisenberg algebra $\mathcal{H}$ via the realization: 
\begin{align}
&\boo_{1, n} =  \frac{\left(1+q_1^n\right)\left(1+q_3^n\right)}{2(1+q_2^{-n})} \bon_n
+\frac{\sqrt{\beta}}{2}\left(1-q_1^n\right)  \bons_{n}, \qquad 
\boo_{1, -n} =\bon_{-n}+ \frac{1-q_3^n}{\sqrt{\beta}\, (1+q_2^{-n})} \bons_{-n}, \label{eq: realize boo1}\\
&\boo_{2, n} = -\frac{\left(1+q_1^n\right)\left(1+q_3^n\right)}{2(1+q_2^{-n})} \bon_{n}
+ \frac{\sqrt{\beta}}{2}\left(1-q_1^n\right)\bons_{n},\qquad 
\boo_{2, -n} = -\bon_{-n}+ \frac{1-q_3^n}{\sqrt{\beta}\, (1+q_2^{-n})} \bons_{-n}, \label{eq: realize boo2}\\
&\boo_{1,0}=-\boo_{2,0}=\bon_0, \quad \cho_1=-\cho_2=\chn, \quad 
\zeo=\frac{\sqrt{\beta}}{2} \bons_0, \quad 
\zcho=\sqrt{\beta}\, \chns \qquad (n>0). \label{eq: realize boo3}
\end{align}
By setting 
\begin{align}
&\varphi_q(z)=\sum_{n>0} \frac{1}{n} \bon_{-n} z^n-\frac{1}{2} \sum_{n>0} \frac{\left(1+q_1^n\right)\left(1+q_3^n\right)}{n\left(1+q_2^{-n}\right)} \bon_{n} z^{-n}+\bon_{0}(\log z)+\chn, \\
&\widetilde{\varphi}_q(z)=\sum_{n>0} \frac{1}{n} \cdot \frac{1-q_3^n}{\sqrt{\beta}(1+q_2^{-n})} \bons_{-n} z^n
-\frac{1}{2} \sum_{n>0} \frac{\sqrt{\beta}\, (1-q_1^n)}{n} \bons_{n} z^{-n}
+\frac{\sqrt{\beta}}{2}\,  \hbar \, \bons_0,
\end{align}
$\Lambda_i^{\pm}(z)$ can be written as 
\begin{align}
&\Lambda_1^{+}(z)=: e^{\varphi_q\left(q^{-1} z\right)} e^{\widetilde{\varphi}_q\left(q^{-1} z\right)}:, \quad 
\Lambda_1^-(z)=: e^{-\varphi_q(q z)} e^{-\widetilde{\varphi}_q(q z)}:, \\
&\Lambda_2^{+}(z)=: e^{-\varphi_q\left(q^{-1} z\right)} e^{\widetilde{\varphi}_q\left(q^{-1} z\right)}:, \quad 
\Lambda_2^-(z)=: e^{\varphi_q(q z)} e^{-\widetilde{\varphi}_q(q z)}:.
\end{align}

In this setting, \svir appears in the limit of $W_i(z)$.

\begin{theorem}\label{thm: expansion W}
The $\hbar$-expansions of $W_i(z)$ are of the forms 
\begin{align}
&W_1(z) = \sqrt{2} \, \widetilde{\psi}(z) +  \frac{\sqrt{-2\beta}}{2}  \,  G(z) \, z \, \hbar +O(\hbar^2), \label{eq: h-exp1}\\
&W_2(z) = \sqrt{2} \, \widetilde{\psi}(z) -  \frac{\sqrt{-2\beta}}{2}  \, G(z) \, z \, \hbar +O(\hbar^2). \label{eq: h-exp2}
\end{align}
Here, $G(z)$ is the fermionic current of \svir realized by (\ref{eq: rep of G(z)}) 
with $\di \sigma=\frac{1-\beta}{2\sqrt{\beta}}$. 
\end{theorem}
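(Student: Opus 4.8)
The strategy is a direct $\hbar$-expansion of the two exponential summands $\Lambda_i^{\pm}(z)$ using the realizations (\ref{eq: realize boo1})--(\ref{eq: realize boo3}), followed by regrouping the terms into the bosonic fields $\phi,\widetilde\phi$ of Section~\ref{sec: lim} and then invoking Proposition~\ref{prop: TG boson} and (\ref{eq: realize psi}) to recognize $\widetilde\psi(z)$ and $G(z)$. First I would substitute the parametrization $q_1=e^\hbar$, $q_2=e^{(\beta-1)\hbar}$, $q_3=e^{-\beta\hbar}$ into the series $\varphi_q(z)$ and $\widetilde\varphi_q(z)$ and Taylor-expand the coefficients to first order in $\hbar$: the key elementary facts are $\frac{(1+q_1^n)(1+q_3^n)}{1+q_2^{-n}}=1+O(\hbar^2)$ (the $O(\hbar)$ term cancels because $1+1=1+1$ at $\hbar=0$ and the linear terms are $n\hbar-n\beta\hbar$ in the numerator against $-(\beta-1)n\hbar$ in the denominator, which match), while $\frac{1-q_3^n}{\sqrt\beta(1+q_2^{-n})}=\frac{n\beta\hbar}{2\sqrt\beta}+O(\hbar^2)=\frac{\sqrt\beta\,n\hbar}{2}+O(\hbar^2)$ and $\frac{\sqrt\beta(1-q_1^n)}{2}=-\frac{\sqrt\beta\,n\hbar}{2}+O(\hbar^2)$. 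Hence $\varphi_q(z)\to\phi(z)$ and $\widetilde\varphi_q(z)=O(\hbar)$, with $\widetilde\varphi_q(z)=\frac{\sqrt\beta\,\hbar}{2}\,\widetilde\phi'(z)\,z+O(\hbar^2)$ once one reads off $\widetilde\phi'(z)z=\bons_0+\sum_{n\neq0}\bons_n z^{-n}$ from the definition of $\widetilde\phi$.

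Next I would expand $\Lambda_i^{\pm}(z)=:e^{\pm\varphi_q(\ldots)}e^{\pm\widetilde\varphi_q(\ldots)}:$. Since $\widetilde\varphi_q=O(\hbar)$, we have $:e^{\pm\widetilde\varphi_q(q^{\mp1}z)}:=1\pm\widetilde\varphi_q(z)+O(\hbar^2)$, and the shift $q^{\mp1}z=z\mp z\hbar+O(\hbar^2)$ contributes another $O(\hbar)$ correction through $:e^{\pm\varphi_q}:$, namely $:e^{\pm\varphi_q(q^{\mp1}z)}:=\;:e^{\pm\phi(z)}:\bigl(1\mp z\hbar\,\partial_z(\pm\phi(z))\bigr)+O(\hbar^2)=\;:e^{\pm\phi(z)}:\bigl(1-z\hbar\,\phi'(z)\bigr)+O(\hbar^2)$ for the $\Lambda_1$ case (and with $\phi\to-\phi$ in the relevant slots for $\Lambda_2$, taking care of the signs prescribed by the definitions of $\Lambda_i^\pm$ in terms of $\varphi_q$). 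Adding $\Lambda_1^+(z)+\Lambda_1^-(z)$ the order-$\hbar^0$ term is $:e^{\phi(z)}:+:e^{-\phi(z)}:=\sqrt2\,\widetilde\psi(z)$ by (\ref{eq: realize psi}), which gives the leading terms of (\ref{eq: h-exp1})--(\ref{eq: h-exp2}). At order $\hbar$ one collects $z\bigl(\frac{\sqrt\beta}{2}\,:\widetilde\phi'(z)(e^{\phi(z)}-e^{-\phi(z)}):-\sqrt\beta\,\sigma_0\,:\phi'(z)(e^{\phi(z)}+e^{-\phi(z)}):\bigr)$ where $\sigma_0$ is whatever constant the $\phi'$-coefficient produces; comparing with (\ref{eq: G boson}) this equals $\frac{\sqrt{-2\beta}}{2}\,z\,G(z)$ precisely when $\sigma=\frac{1-\beta}{2\sqrt\beta}$, which is where the stated value of $\sigma$ is forced. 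The $W_2$ case is identical with the global sign flip on the $\widetilde\varphi_q$-induced terms, producing the $-\frac{\sqrt{-2\beta}}{2}$ in (\ref{eq: h-exp2}).

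The main obstacle is bookkeeping rather than conceptual: one must be careful that the normal-ordering symbol $:\ast:$ in $:e^{\pm\varphi_q}e^{\pm\widetilde\varphi_q}:$ is the \emph{joint} normal ordering, so that when expanding the product $e^{\pm\varphi_q}\cdot e^{\pm\widetilde\varphi_q}$ no extra contraction arises at this order (indeed $[\bon_n,\bons_m]=0$, so $\varphi_q$ and $\widetilde\varphi_q$ are mutually commuting and the split is clean), and similarly that the first-order Taylor expansion of the vertex operator $:e^{\pm\varphi_q(q^{\mp1}z)}:$ in the shift of its argument is legitimate termwise as a formal power series in $z,z^{-1}$ with coefficients that are themselves $O(\hbar)$ — this is routine but needs the observation that $\varphi_q(z)-\phi(z)=O(\hbar^2)$ only in its \emph{oscillator} part, while the argument-shift correction is genuinely $O(\hbar)$. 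The one genuine computation to carry out carefully is the matching of the $\phi'$ and $\widetilde\phi'$ coefficients against (\ref{eq: G boson}), i.e.\ verifying the identification of $\sigma$; everything else follows by assembling the pieces and appealing to Proposition~\ref{prop: TG boson}.
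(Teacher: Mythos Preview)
Your approach is essentially identical to the paper's own proof: expand $\varphi_q$ and $\widetilde\varphi_q$ to first order in $\hbar$, read off the $\hbar^0$ term as $\sqrt2\,\widetilde\psi(z)$, and identify the $\hbar^1$ term with the bosonized expression for $G(z)$ in Proposition~\ref{prop: TG boson}. The paper organizes the computation by directly evaluating $\lim_{\hbar\to 0}\partial_\hbar\varphi_q(q^{\mp1}z)$ and $\lim_{\hbar\to 0}\partial_\hbar\widetilde\varphi_q(q^{\mp1}z)$ rather than splitting into ``coefficient expansion'' plus ``argument shift'', but this is a purely cosmetic difference.

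There is, however, one concrete slip you should fix: you wrote $q^{\mp1}z=z\mp z\hbar+O(\hbar^2)$, which tacitly identifies $q$ with $q_1=e^{\hbar}$. In the parametrization of Section~\ref{subsec: lim} we have $q=q_2^{1/2}=e^{(\beta-1)\hbar/2}$, so the correct shift is $q^{\mp1}-1=\pm\frac{1-\beta}{2}\hbar+O(\hbar^2)$. Carrying your formula through as written would give the $\phi'$-coefficient as $-1$ rather than $\frac{1-\beta}{2}$, and hence a wrong value of $\sigma$; with the correct shift one gets exactly $\sigma=\frac{1-\beta}{2\sqrt\beta}$ as required. (Your remark about $W_2$ is also slightly imprecise---it is the sign of $\varphi_q$, not of $\widetilde\varphi_q$, that is flipped in passing from $\Lambda_1^{\pm}$ to $\Lambda_2^{\pm}$; nonetheless the net effect at order $\hbar$ is indeed an overall sign change on the $G(z)$ term, so your conclusion stands.)
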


\begin{proof}
The limits of 
$\varphi_q(q^{\pm 1} z)$ and $\widetilde{\varphi}_q(q^{\pm 1} z)$ are given by
\begin{align}
\lim_{\hbar \rightarrow 0} \varphi_q(q^{\pm 1} z)=\phi(z),\quad 
\lim_{\hbar \rightarrow 0} \widetilde{\varphi}_q(q^{\pm 1} z)=0. 
\end{align}
Thus we have\footnote{
We denote by $\di \mathrm{Coeff}_{\hbar^n} A$ 
the coefficient of $\hbar^n$ in the $\hbar$-expansion of $A$.} 
\begin{align}
&\mathrm{Coeff}_{\hbar^0}  W_i(z)= \lim_{\hbar \rightarrow 0} W_i(z)
=:e^{\phi (z)}+e^{-\phi(z)}:= \sqrt{2} \, \widetilde{\psi}(z)\qquad 
(i=1,2). 
\end{align}
A direct calculation gives 
\begin{align}
&\lim_{\hbar \rightarrow 0}\frac{\partial}{\partial \hbar}  \varphi_q(q^{-1}z) 
=\frac{1-\beta}{2} \, \phi'(z) \cdot z,
\qquad 
\lim_{\hbar \rightarrow 0}\frac{\partial}{\partial \hbar}  \varphi_q(qz)
=-\frac{1-\beta}{2} \, \phi'(z) \cdot z,
\\
&\lim_{\hbar \rightarrow 0}\frac{\partial}{\partial \hbar}  \widetilde{\varphi}_q(q^{-1}z) 
=\lim_{\hbar \rightarrow 0}\frac{\partial}{\partial \hbar}  \widetilde{\varphi}_q(qz) 
=\frac{\sqrt{\beta}}{2} \, \widetilde{\phi}'(z) \cdot z. 
\end{align}
This leads to 
\begin{align}
&\lim_{\hbar \rightarrow 0}\frac{\partial}{\partial \hbar} \Lambda^{\pm}_1(z)
= :\frac{1-\beta}{2}\,  \phi'(z) e^{\pm \phi(z)}\cdot z
\pm \frac{\sqrt{\beta}}{2} \, \widetilde{\phi}'(z) e^{\pm \phi(z)}\cdot z:,\\
&\lim_{\hbar \rightarrow 0}\frac{\partial}{\partial \hbar} \Lambda^{\pm}_2(z)
= :-\frac{1-\beta}{2}\,  \phi'(z) e^{\mp \phi(z)}\cdot z
\pm \frac{\sqrt{\beta}}{2} \, \widetilde{\phi}'(z) e^{\mp \phi(z)}\cdot z:.
\end{align}
From these,  we obtain 
\begin{align}
&\mathrm{Coeff}_{\hbar^1} W_i(z)
=\lim_{\hbar \rightarrow 0}\frac{\partial}{\partial \hbar} W_i(z)\\
&\quad =\begin{cases}
\di :\frac{1-\beta}{2} \phi'(z) \left( e^{\phi(z)} + e^{- \phi(z)}\right)\cdot z
+ \frac{\sqrt{\beta}}{2} \, \widetilde{\phi}'(z) \left( e^{ \phi(z)}-e^{ -\phi(z)} \right) \cdot z:
&(i=1),\\
\di :-\frac{1-\beta}{2} \phi'(z) \left( e^{\phi(z)} + e^{- \phi(z)}\right)\cdot z
+ \frac{\sqrt{\beta}}{2} \, \widetilde{\phi}'(z) \left( -e^{ \phi(z)}+e^{ -\phi(z)} \right) \cdot z:
& (i=2).
\end{cases}\nonumber 
%&=\frac{\sqrt{-2\beta}}{2} \left( :\widetilde{\phi}'(z): \psi(z) + \frac{1-\beta}{\sqrt{\beta}} \psi'(z) \right)z,\\ %expression by psi
%&=-\frac{\sqrt{-2\beta}}{2} \left( :\widetilde{\phi}'(z): \psi(z) + \frac{1-\beta}{\sqrt{\beta}} \psi'(z) \right)z. %expression by psi
\end{align}
Comparing with Proposition \ref{prop: TG boson}, 
we see that the result coincides, and the proof is complete.
\end{proof}

\begin{remark}
By Theorem \ref{thm: expansion W}, the fermionic current $G(z)$ is given by the limit 
\begin{align}
G(z)=\lim_{\hbar \rightarrow 0}\frac{z^{-1}}{(q_1-1)\sqrt{-2\beta }}\left(W_{1}(z)-W_{2}(z)\right). 
\end{align}
This generates \svir with central charge $\di C=\frac{3}{2}-\frac{3(1-\beta)^2}{\beta}$.%
\footnote{There is a typo in the central charge of Theorem 4.2 in \cite{Ohkubo2025toward}. 
The correct value is $\di C=\frac{3}{2}-3\frac{(1-\kappa)^2}{\kappa}$. }
Operators corresponding to the generator $T(z)$ are discussed in Section \ref{sec: quad rel}. 
\end{remark}

\begin{remark}
The constant term $\widetilde{\psi}(z)$ appearing in the $\hbar$-expansions (\ref{eq: h-exp1}) and (\ref{eq: h-exp2})
anticommutes with $G(w)$:
\begin{align}
[\widetilde{\psi}(z), G(w)]_+=0. 
\end{align}
Hence, the entire algebra generated by $W_i(z)$ can be regarded as a $q$-deformation of 
$\mathsf{F}\oplus \svirm$, where 
$\mathsf{F}$ denotes the free fermion algebra generated by $\widetilde{\psi}_{\mu}$. 
\end{remark}

\section{Screening Currents}\label{sec: screening}

In this section, we discuss screening currents of $W_i(z)$. 
The algebra generated by $W_i(z)$ admits two screening currents, 
which are sums of two exponential terms such as 
the bosonic screenings of the quantum affine algebra $U_q(\widehat{\mathfrak{sl}}_2)$ \cite{Matsuo1994qdeformation}.
(See also \cite{AHKS2024quantum}.)
We could not construct screening currents consisting of a single exponential terms.

\begin{definition}\label{def: screening}
Set
\begin{align}
&\tau_+= \frac{1}{\beta},\quad \tau_{-}=-1, \\
&s_{+}=q_3,\quad s_{-}=q_1. 
\end{align}
Define the screening currents $S^+(z)$ and $S^-(z)$ by 
\begin{align}
S^{\pm}(z)=S^{\pm}_1(z)-S^{\pm}_2(z),
\end{align}
\begin{align}
S^{\pm}_1(z)=&\exp\left( \sum_{n>0}
\frac{s_{\pm}^{\frac{n}{2}}(1+q_2^n)}{q^n \left(1-s_{\pm}^{2n}\right)}(\boo_{1,-n}+s_{\pm}^n \boo_{2,-n})z^n \right)
\exp\left( \sum_{n>0}\frac{s_{\pm}^{\frac{n}{2}}(1+q_2^n)}{q^n \left(1-s_{\pm}^{2n}\right)}(s_{\pm}^n \boo_{1,n}+\boo_{2,n})z^{-n} \right)\nonumber \\
&\times e^{\cho_1+\tau_{\pm}\zcho}
z^{\boo_{1,0}+2\tau_{\pm}\zeo}, \\
S^\pm_2(z)=&\exp\left(\sum_{n>0} \frac{s_{\pm}^{\frac{n}{2}}(1+q_2^n)}{q^n \left(1-s_{\pm}^{2n}\right)}(s_{\pm}^n \boo_{1,-n}+\boo_{2,-n})z^n\right)
\exp\left( \sum_{n>0} \frac{s_{\pm}^{\frac{n}{2}}(1+q_2^n)}{q^n \left(1-s_{\pm}^{2n}\right)} (\boo_{1,n}+s_{\pm}^n \boo_{2,n}) z^{-n}\right) \nonumber\\
&\times e^{-\cho_1+\tau_{\pm}\zcho} 
z^{-\boo_{1,0}+2\tau_{\pm}\zeo}. 
\end{align}
\end{definition}

Apart from slight modifications to the zero modes, 
these screening currents $S^{\pm}(z)$ are the same 
as those constructed in \cite{Zenkevich2019gln}. 
As for the operator product formulas among $S^{\pm}_i(z)$, see Appendix \ref{sec: OPE screening}. 
These screening currents $S^{\pm}(w)$ anticommute with $W_i(z)$ up to the total difference of an operator.

\begin{proposition}\label{prop: S W rel}
We obtain
\begin{align}
&[W_1(z), S^{\pm}(w)]_+
=w^{-1}(T_{s_{\pm},w}-1)\delta\left(\frac{w}{s_{\pm}^{1/2}z}\right):\Lambda^+_{1}(s_{\pm}^{-1/2}w)S^{\pm}_2(w):, \label{eq: W_1 S}\\
&[W_2(z), S^{\pm}(w)]_+
=w^{-1}(1-T_{s_{\pm},w})\delta\left(\frac{w}{s_{\pm}^{1/2}z}\right):\Lambda^+_{2}(s_{\pm}^{-1/2}w)S^{\pm}_1(w):.\label{eq: W_2 S}
\end{align} 
%\begin{align}%i+ and i-
%&W_i(z)S^+(w)+S^+(w)W_i(z)
%=(-1)^{i+1}w^{-1}(1-T_{q_3,w})\delta\left(\frac{w}{q_3^{1/2}z}\right):\Lambda^+_{i}(q_3^{-1/2}w)S^+_i(w):,\\
%&W_i(z)S^{-}(w)+S^{-}(w)W_i(z)
%=(-1)^{i+1}w^{-1}(1-T_{q_1,w})\delta\left(\frac{w}{q_1^{1/2}z}\right):\Lambda^+_{i}(q_1^{-1/2}w)S^{-}_i(w):.
%\end{align}
%\begin{align}%All
%W_i(z)S^{\pm}(w)+S^{\pm}(w)W_i(z)
%=(-1)^{i+1}w^{-1}(1-T_{s_{\pm},w})\delta\left(\frac{w}{s_{\pm}^{1/2}z}\right):\Lambda^+_{i}(s_{\pm}^{-1/2}w)S^{\pm}_i(w):.
%\end{align} 
Here, $T_{p,w}$ is the difference operator defined by $T_{p,w}f(w)=f(pw)$. 
\end{proposition}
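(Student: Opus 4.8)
The plan is to compute the operator product $\Lambda^{\pm}_i(z)S^{\pm}_j(w)$ for all combinations $(i,j)$ by the standard normal-ordering recipe: for two vertex operators $A(z)=\,:\exp(\sum_{n\neq 0}a_nz^{-n})\!:z^{a_0}\cdots$ and $B(w)=\,:\exp(\sum_{n\neq 0}b_nw^{-n})\!:w^{b_0}\cdots$, one has $A(z)B(w)=\,:A(z)B(w)\!:\,\cdot\,(\text{scalar factor})$, where the scalar factor is $z^{[a_0\text{-part},\,b_n]\text{-contractions}}$ times $\exp$ of the sum of the contractions $[a_n,b_{-n}]$. First I would record the contractions needed: from $[\boo_{i,n},\boo_{j,m}]$ in Definition \ref{def: W}, the relevant combinations are the "same index" term $n\delta_{n+m,0}$ and the "different index" term $-n(q_1^n+q_3^n)/(1+q_2^{-n})\,\delta_{n+m,0}$; note that $S^{\pm}_1$ carries the combination $\boo_{1,-n}+s_{\pm}^n\boo_{2,-n}$ on the creation side and $s_{\pm}^n\boo_{1,n}+\boo_{2,n}$ on the annihilation side (and symmetrically for $S^{\pm}_2$). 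The key algebraic miracle to verify is that, when $\Lambda^+_1(z)$ (whose mode is $-q^n\boo_{1,n}$) is contracted against $S^{\pm}_1(w)$, the factor $q_1^n+q_3^n$ appearing in the off-diagonal bracket combines with the $s_{\pm}^{\pm n}$ weights so as to collapse the exponential prefactor to a rational function with a single pole — this is exactly the point where the special values $s_{+}=q_3$, $s_{-}=q_1$, $\tau_{+}=1/\beta$, $\tau_{-}=-1$ are forced.

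Next I would assemble the four products into $W_1(z)S^{\pm}(w)$ and $S^{\pm}(w)W_1(z)$. Concretely: $W_1(z)S^{\pm}(w)=\big(\Lambda^+_1(z)+\Lambda^-_1(z)\big)\big(S^{\pm}_1(w)-S^{\pm}_2(w)\big)$, and similarly with the order reversed; then add. The claim is that all normal-ordered products $:\Lambda^{\pm}_1(z)S^{\pm}_j(w):$ that survive reduce (after using the scalar prefactors) to a single normal-ordered object $:\Lambda^+_1(s_{\pm}^{-1/2}w)S^{\pm}_2(w):$ multiplied by rational functions of $w/z$, and that the anticommutator $W_1(z)S^{\pm}(w)+S^{\pm}(w)W_1(z)$ has all these rational coefficients cancel against each other except for a pole at $z=s_{\pm}^{-1/2}w$, converting into the $\delta$-function $\delta(w/(s_{\pm}^{1/2}z))$, with the shift operator $T_{s_{\pm},w}$ arising because the two surviving terms are evaluated at arguments differing by the factor $s_{\pm}$. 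The zero-mode bookkeeping — the powers $z^{\boo_{1,0}}$, $z^{\pm\boo_{1,0}+2\tau_{\pm}\zeo}$, the commutator $[\boo_{1,0},\cho_1]=1$, $[\zeo,\zcho]=\beta/2$, and the factors $q_1^{\pm\zeo}$ — must be tracked carefully; I would handle these separately and check that the net zero-mode contribution is precisely what makes the exponents in the two surviving terms match so the $\delta$-function identity $f(w/z)\delta(w/cz)=f(1/c)\delta(w/cz)$ applies, and that the residual sign is $+$ (an anticommutator) rather than $-$.

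The computation for $W_2(z)$ is parallel; here I would use the zero-mode relations $\boo_{1,0}=-\boo_{2,0}$, $\cho_1=-\cho_2$, and the structural symmetry exchanging the roles of $S^{\pm}_1$ and $S^{\pm}_2$, which explains why the right-hand side of (\ref{eq: W_2 S}) involves $:\Lambda^+_2(s_{\pm}^{-1/2}w)S^{\pm}_1(w):$ and carries the opposite sign $(1-T_{s_{\pm},w})$. Rather than redo everything, I would note that $W_2(z)$ and $S^{\pm}(w)$ are obtained from $W_1(z)$ and $S^{\pm}(w)$ by the index-swap $1\leftrightarrow 2$ together with the sign flip of the zero modes, under which all the $\boo_{i,n}$-commutators are invariant, so the same cancellations occur with the roles of the two exponential pieces interchanged.

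\emph{Main obstacle.} The hard part is the precise matching of the scalar prefactors: showing that the exponential factor $\exp\big(\sum_{n>0}(\dots)(w/z)^n\big)$ coming from the infinitely many oscillator contractions between $\Lambda^+_1$ and $S^{\pm}_1$ is in fact a ratio of finitely many linear factors — i.e. that the series telescopes thanks to $q_1q_2q_3=1$ and the identities $q_1^n+q_3^n = (1+q_2^{-n})\cdot(\text{something involving }s_{\pm}^{\pm n/2})$ at $s_{\pm}\in\{q_1,q_3\}$ — and then extracting the exact location of the single pole and the exact residue, including the correct power of $q$ and the correct overall constant, so that the two terms in the anticommutator differ precisely by $T_{s_{\pm},w}$ acting on a common seed. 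I expect everything else (distributing, collecting, the $\delta$-function substitution) to be routine once this prefactor identity is pinned down; the off-diagonal oscillator bracket with its $(q_1^n+q_3^n)/(1+q_2^{-n})$ factor is genuinely where all the arithmetic lives.
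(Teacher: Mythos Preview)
Your approach is essentially the same as the paper's: compute the eight operator products $\Lambda^{\pm}_i(z)\,S^{\pm}_j(w)$ and $S^{\pm}_j(w)\,\Lambda^{\pm}_i(z)$, add in pairs, and extract the delta functions. Your diagnosis of where the work lies --- the oscillator contractions collapsing to single linear factors thanks to $s_{\pm}\in\{q_1,q_3\}$ --- is correct, and the paper simply lists those eight products as facts.

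One point to sharpen: you describe the outcome as a single pole at $z=s_{\pm}^{-1/2}w$, with the difference operator arising from a shift of argument. What actually happens is that two of the four anticommutator pairs vanish outright (their scalar prefactors are \emph{polynomial}, e.g.\ $(1-q_2 s_{\pm}^{1/2}w/z)$, so no pole at all), while the remaining two produce delta functions at \emph{two different} locations $w=s_{\pm}^{1/2}z$ and $w=s_{\pm}^{-1/2}z$, carrying the \emph{distinct} normal-ordered seeds $:\Lambda^+_1(s_{\pm}^{-1/2}w)S^{\pm}_2(w):$ and $:\Lambda^-_1(s_{\pm}^{1/2}w)S^{\pm}_1(w):$. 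The final ingredient is the vertex-operator identity
\[
T_{s_{\pm},w}\,{:}\Lambda^+_1(s_{\pm}^{-1/2}w)S^{\pm}_2(w){:}\;=\;{:}\Lambda^-_1(s_{\pm}^{1/2}w)S^{\pm}_1(w){:},
\]
which the paper states and uses to rewrite both contributions as $(T_{s_{\pm},w}-1)$ acting on a common seed. You allude to this (``the two surviving terms are evaluated at arguments differing by the factor $s_{\pm}$''), but verifying this identity --- that the oscillator exponents and zero-mode content of the two distinct products match under the shift --- is a separate check you should carry out explicitly.
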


\begin{proof}
First, we show (\ref{eq: W_1 S}). 
The operator products among $\Lambda^{\pm}_1(z)$ and $S^{\pm}_i (w)$ take the form 
\begin{align}
&\Lambda^+_{1}(z)S^{\pm}_{1}(w)=\left(1-\frac{q_2 s_{\pm}^{1/2}w}{z} \right) \frac{z}{q_2 s_{\pm}^{1/2}}:\Lambda^+_{1}(z)S^{\pm}_{1}(w):,\\
&S^{\pm}_{1}(w)\Lambda^+_{1}(z)=\left(1-\frac{z}{q_2 s_{\pm}^{1/2}w} \right) w :S^{\pm}_{1}(w)\Lambda^+_{1}(z):,\\
&\Lambda^+_{1}(z)S^{\pm}_{2}(w)=\frac{1}{1-w/(s_{\pm}^{1/2}z)}s_{\pm}^{-1/2}z^{-1}
:\Lambda^+_{1}(z)S^{\pm}_{2}(w):,\\
&S^{\pm}_{2}(w)\Lambda^+_{1}(z)=\frac{1}{1-s_{\pm}^{1/2}z/w}w^{-1}:S^{\pm}_{2}(w)\Lambda^+_{1}(z):,\\
&\Lambda^-_{1}(z)S^{\pm}_{1}(w)=\frac{1}{1-s_{\pm}^{1/2}w/z}s_{\pm}^{1/2}z^{-1}:\Lambda^-_{1}(z)S^{\pm}_{1}(w):,\\
&S^{\pm}_{1}(w)\Lambda^-_{1}(z)=\frac{1}{1-z/(s_{\pm}^{1/2}w)}w^{-1}:S^{\pm}_{1}(w)\Lambda^-_{1}(z):,\\
&\Lambda^-_{1}(z)S^{\pm}_{2}(w)=\left(1-\frac{w}{q_2 s_{\pm}^{1/2}z} \right) q_2 s_{\pm}^{1/2} z
:\Lambda^-_{1}(z)S^{\pm}_{2}(w):,\\
&S^{\pm}_{2}(w)\Lambda^-_{1}(z)=\left(1-\frac{q_2 s_{\pm}^{1/2}z}{w} \right) w
:S^{\pm}_{2}(w)\Lambda^-_{1}(z):.
\end{align}
Thus, we have 
\begin{align}
&\Lambda^+_{1}(z)S^{\pm}_{1}(w)+S^{\pm}_{1}(w)\Lambda^+_{1}(z)=0,\\
&\Lambda^+_{1}(z)S^{\pm}_{2}(w)+S^{\pm}_{2}(w)\Lambda^+_{1}(z)
=w^{-1}\delta\left(\frac{w}{s_{\pm}^{1/2}z}\right):\Lambda^+_{1}(s_{\pm}^{-1/2}w)S^{\pm}_{2}(w):,\\
&\Lambda^-_{1}(z)S^{\pm}_{1}(w)+S^{\pm}_{1}(w)\Lambda^-_{1}(z)
=w^{-1}\delta\left(\frac{s_{\pm}^{1/2}w}{z}\right):\Lambda^-_{1}(s_{\pm}^{1/2}w)S^{\pm}_{1}(w):, \\
&\Lambda^-_{1}(z)S^{\pm}_{2}(w)+S^{\pm}_{2}(w)\Lambda^-_{1}(z)=0. 
\end{align}
By the relation $T_{s_{\pm},w}:\Lambda^+_{1}(s_{\pm}^{-1/2}w)S^{\pm}_{2}(w):=:\Lambda^-_{1}(s_{\pm}^{1/2}w)S^{\pm}_{1}(w):$, 
we obtain (\ref{eq: W_1 S}).

Next, we show (\ref{eq: W_2 S}). 
The operator products among $\Lambda^{\pm}_2(z)$ and $S^{\pm}_i (w)$ take the form
\begin{align}
&\Lambda^+_{2}(z)S^{\pm}_{1}(w)=\frac{1}{1-w/( s_{\pm}^{1/2}z)} s_{\pm}^{-1/2}z^{-1}:\Lambda^+_{2}(z)S^{\pm}_{1}(w):,\\
&S^{\pm}_{1}(w)\Lambda^+_{2}(z)=\frac{1}{1- s_{\pm}^{1/2}z/w}w^{-1}:S^{\pm}_{1}(w)\Lambda^+_{2}(z):,\\
&\Lambda^+_{2}(z)S^{\pm}_{2}(w)=\left(1-\frac{q_2 s_{\pm}^{1/2} w}{z} \right) \frac{z}{q_2 s_{\pm}^{1/2}}
:\Lambda^+_{2}(z)S^{\pm}_{2}(w):,\\
&S^{\pm}_{2}(w)\Lambda^+_{2}(z)=\left(1-\frac{z}{q_2 s_{\pm}^{1/2} w} \right) w 
:S^{\pm}_{2}(w)\Lambda^+_{2}(z):,\\
&\Lambda^-_{2}(z)S^{\pm}_{1}(w)=\left(1-\frac{w}{q_2 s_{\pm}^{1/2} z} \right) q_2 s_{\pm}^{1/2} z
:\Lambda^-_{2}(z)S^{\pm}_{1}(w):,\\
&S^{\pm}_{1}(w)\Lambda^-_{2}(z)=\left(1-\frac{q_2 s_{\pm}^{1/2} z}{ w} \right) w:S^{\pm}_{1}(w)\Lambda^-_{2}(z):,\\
&\Lambda^-_{2}(z)S^{\pm}_{2}(w)=\frac{1}{1- s_{\pm}^{1/2}w/z} s_{\pm}^{1/2}z^{-1}:\Lambda^-_{2}(z)S^{\pm}_{2}(w):,\\
&S^{\pm}_{2}(w)\Lambda^-_{2}(z)=\frac{1}{1-z/( s_{\pm}^{1/2}w)}w^{-1}:S^{\pm}_{2}(w)\Lambda^-_{2}(z):. 
\end{align}
Thus, we have 
\begin{align}
&\Lambda^+_{2}(z)S^{\pm}_{1}(w)+S^{\pm}_{1}(w)\Lambda^+_{2}(z)=w^{-1}\delta\left( \frac{w}{s_{\pm}^{1/2}z}\right)
:\Lambda^+_{2}(s_{\pm}^{-1/2}w)S^{\pm}_{1}(w):, \\
&\Lambda^+_{2}(z)S^{\pm}_{2}(w)+S^{\pm}_{2}(w)\Lambda^+_{2}(z)
=0,\\
&\Lambda^-_{2}(z)S^{\pm}_{1}(w)+S^{\pm}_{1}(w)\Lambda^-_{2}(z)=0, \\
&\Lambda^-_{2}(z)S^{\pm}_{2}(w)+S^{\pm}_{2}(w)\Lambda^-_{2}(z)=w^{-1} \delta\left( \frac{s_{\pm}^{1/2}w}{z} \right)
:\Lambda^-_{2}(s_{\pm}^{1/2}w)S^{\pm}_{2}(w):. 
\end{align}
By the relation $T_{s_{\pm},w}:\Lambda^+_{2}(s_{\pm}^{-1/2}w)S^{\pm}_{1}(w):  =  :\Lambda^-_{2}(s_{\pm}^{1/2}w)S^{\pm}_{2}(w):$, 
we obtain (\ref{eq: W_2 S}).
\end{proof}

The degenerate limits of the screening currents $S^+(z)$ and $S^{-}(z)$ coincide with the ones of \svir. 

\begin{comment}
In what follows, we assume $\di \sigma=\frac{1-\beta}{2\sqrt{\beta}}$. 
In this case, the parameter $t_{\pm}$ in Definition \ref{def: svir screening} can be written as 
\begin{align}
t_{+}=\frac{1}{\sqrt{\beta}}, \quad t_{-}=-\sqrt{\beta}. 
\end{align}
Since the square root is a double-valued function, 
$t_{+}$ and $t_{-}$ may be interchanged. 
We shall adopt the above assignment as our convention.
\end{comment}

\begin{theorem}
Under the realization (\ref{eq: realize boo1})--(\ref{eq: realize boo3}), 
we obtain 
\begin{align}
\lim_{\hbar \rightarrow 0} S^{\pm}(z)=\frac{\sqrt{-2}}{t_{\pm}}\, \mathsf{S}^{\pm}(z). 
\end{align}
Here, $\mathsf{S}^{\pm}(z)$ are the screening currents of \svir (Definition \ref{def: svir screening}), 
with $\di \sigma=\frac{1-\beta}{2\sqrt{\beta}}$. 
The parameters $t_{\pm}$ are assigned as\footnote{
Although $t_{\pm}$ are formally given by $t_{\pm}=\sigma \pm \sqrt{\sigma^2+1}$, 
since the square root is multivalued, we fix the value of $t_{\pm}$ as above to match the limit.}%
\begin{align}
t_{+}=\frac{1}{\sqrt{\beta}}, \quad t_{-}=-\sqrt{\beta}. 
\end{align}
\end{theorem}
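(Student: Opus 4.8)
The plan is to substitute the bosonization (\ref{eq: realize boo1})--(\ref{eq: realize boo3}) of the modes $\boo_{i,n},\cho_i,\zeo,\zcho$ into the definition of $S^{\pm}_1(z)$ and $S^{\pm}_2(z)$, expand each exponent as a series in the modes $\bon_n$ and $\bons_n$, and then pass to the coefficient-wise limit $\hbar\to 0$. The crucial point is that the apparently singular prefactor
\[
\frac{1}{1-s_{\pm}^{2n}}=\frac{1}{(1-s_{\pm}^{n})(1+s_{\pm}^{n})}
\]
is tamed by the particular combinations of $\boo$-modes occurring in the screening currents. For instance, from (\ref{eq: realize boo1})--(\ref{eq: realize boo2}),
\[
\boo_{1,-n}+s_{\pm}^{n}\boo_{2,-n}=(1-s_{\pm}^{n})\,\bon_{-n}+(1+s_{\pm}^{n})\,\frac{1-q_3^{n}}{\sqrt{\beta}\,(1+q_2^{-n})}\,\bons_{-n},
\]
so the factor $(1-s_{\pm}^{n})$ cancels against the prefactor in the $\bon$-part, while in the $\bons$-part the surviving ratio (together with the analogous ratios coming from the annihilation exponent and from $S^{\pm}_2$) is a rational function of $q_1,q_2,q_3$ with a finite limit. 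I would carry out this reduction once and for all for the creation exponent, the annihilation exponent, and for both $S^{\pm}_1$ and $S^{\pm}_2$.

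Next I would evaluate the resulting finite limits using the parametrization $q_1=e^{\hbar}$, $q_2=e^{(\beta-1)\hbar}$, $q_3=e^{-\beta\hbar}$, $s_{+}=q_3$, $s_{-}=q_1$. The outcome should be that every surviving coefficient tends to $\tfrac{1}{n}$ times the mode appearing in $\pm\phi(z)+t_{\pm}\widetilde{\phi}(z)$, with $t_{+}=1/\sqrt{\beta}$, $t_{-}=-\sqrt{\beta}$ (for example, ratios such as $\tfrac{1-q_1^{n}}{1-q_3^{n}}\to-\tfrac1\beta$ are precisely what produces the coefficient $t_{\pm}$ in front of the $\bons$-modes). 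The zero-mode factor requires no computation: since $\tau_{+}\sqrt{\beta}=1/\sqrt{\beta}=t_{+}$ and $\tau_{-}\sqrt{\beta}=-\sqrt{\beta}=t_{-}$, the realization (\ref{eq: realize boo3}) turns $e^{\cho_1+\tau_{\pm}\zcho}z^{\boo_{1,0}+2\tau_{\pm}\zeo}$ into $e^{\chn+t_{\pm}\chns}z^{\bon_0+t_{\pm}\bons_0}$, which is exactly the zero-mode part of $\;:e^{\phi(z)+t_{\pm}\widetilde{\phi}(z)}:\;$, and similarly with $\phi\to-\phi$ for $S^{\pm}_2$. Combining the oscillator and zero-mode limits yields
\[
\lim_{\hbar\to0}S^{\pm}_1(z)=\;:e^{\phi(z)+t_{\pm}\widetilde{\phi}(z)}:\;,\qquad
\lim_{\hbar\to0}S^{\pm}_2(z)=\;:e^{-\phi(z)+t_{\pm}\widetilde{\phi}(z)}:\;.
\]

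Finally, because $[\bon_n,\bons_m]=0$ the fields $\phi$ and $\widetilde{\phi}$ commute, so each limit factorizes as $:e^{\pm\phi(z)}:\,:e^{t_{\pm}\widetilde{\phi}(z)}:$, whence
\[
\lim_{\hbar\to0}S^{\pm}(z)=\big(:e^{\phi(z)}:-:e^{-\phi(z)}:\big):e^{t_{\pm}\widetilde{\phi}(z)}:
=\sqrt{-2}\,\psi(z)\,:e^{t_{\pm}\widetilde{\phi}(z)}:=\frac{\sqrt{-2}}{t_{\pm}}\,\mathsf{S}^{\pm}(z)
\]
by the boson--fermion correspondence (\ref{eq: realize psi}) and Definition \ref{def: svir screening}. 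I expect the only genuine obstacle to be bookkeeping: one must check that every $(1-s_{\pm}^{n})$-type singular factor cancels and that the leftover ratios of $q$-quantities assemble exactly into the coefficients $1/n$ and $t_{\pm}/n$ that rebuild $\pm\phi(z)+t_{\pm}\widetilde{\phi}(z)$. There is also the minor point, flagged in the footnote to the statement, that the branch of $\sqrt{\beta}$ must be fixed so that $t_{+}=1/\sqrt{\beta}$ and $t_{-}=-\sqrt{\beta}$; this is what makes the two limits land on $\mathsf{S}^{+}$ and $\mathsf{S}^{-}$ respectively rather than being interchanged.
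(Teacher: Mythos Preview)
Your proposal is correct and follows essentially the same route as the paper: substitute the realization (\ref{eq: realize boo1})--(\ref{eq: realize boo3}) into $S^{\pm}_1(z)$ and $S^{\pm}_2(z)$, observe the cancellation of the would-be singular factors $(1-s_{\pm}^n)^{-1}$ against the combinations $\boo_{1,\pm n}\pm s_{\pm}^{n}\boo_{2,\pm n}$, and take the $\hbar\to 0$ limit to recover $:e^{\pm\phi(z)+t_{\pm}\widetilde{\phi}(z)}:$. The paper presents this by writing out the intermediate factorizations $S^+_k=A^{\pm}B$, $S^-_k=C^{\pm}D$ explicitly in terms of $\bon_n,\bons_n$ and then letting $\hbar\to 0$, whereas you explain the cancellation mechanism more conceptually; the content of the computation is the same.
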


\proof
In terms of the Heisenberg algebra $\mathcal{H}$, 
the screening currents can be written as 
\begin{align}
&S^+_1(z)=A^+(z)B(z), \quad S^+_2(z)=A^-(z)B(z), \\
&S^-_1(z)=C^+(z)D(z),\quad S^-_2(z)=C^-(z)D(z),
\end{align}
where 
\begin{align}
&A^{\pm}(z)=\exp\left( \pm \sum_{n>0}
\frac{\left(1+q_2^{-n}\right)}{nq_1^{\frac{n}{2}} \left(1+q_3^n\right)}\bon_{-n} z^n \right)
\exp\left( \mp \sum_{n>0}\frac{\left(1+q_1^n\right)}{2n  q_1^{\frac{n}{2}}}\bon_nz^{-n} \right)
e^{\pm\chn}z^{\pm\bon_0}, \\
&B(z)=\exp\left(\sum_{n>0}
\frac{1}{n \sqrt{\beta } q_1^{\frac{n}{2}}}\bons_{-n}z^n \right)
\exp\left(\sum_{n>0}
\frac{\sqrt{\beta } \left(1-q_1^n\right) \left(1+q_2^{-n}\right)}{2n q_1^{\frac{n}{2}} \left(1-q_3^n\right)}\bons_n z^{-n} \right)
e^{\frac{1}{\sqrt{\beta}}\chns}
z^{\frac{1}{\sqrt{\beta}}\bons_0}, \\
&C^{\pm}(z)=\exp\left(\pm \sum_{n>0}
\frac{\left(1+q_2^{-n}\right)}{n \left(1+q_1^n\right) q_3^{\frac{n}{2}}}\bon_{-n}z^n \right)
\exp\left( \mp \sum_{n>0}\frac{ \left(1+q_3^n\right)}{2 n q_3^{\frac{n}{2}}}\bon_n z^{-n} \right)
e^{\pm\chn}z^{\pm\bon_0},\\
&D(z)=\exp\left( \sum_{n>0}
\frac{\left(1-q_3^n\right) }{n \sqrt{\beta } \left(1-q_1^n\right) q_3^{\frac{n}{2}}}\bons_{-n}z^n \right) 
\exp\left(\sum_{n>0}\frac{\sqrt{\beta } \left(1+q_2^{-n}\right)}{2 n q_3^{\frac{n}{2}}}\bons_n z^{-n} \right)
e^{-\sqrt{\beta} \, \chns} z^{-\sqrt{\beta}\, \bons_0}.
\end{align}
This yields 
\begin{align}
&\lim_{\hbar \rightarrow 0} S^{\pm}(z)
=:(e^{\phi(z)}-e^{-\phi(z)})e^{t_{\pm} \widetilde{\phi}(z)}:
=\sqrt{-2}\, \psi(z) :e^{t_{\pm}\widetilde{\phi}(z)}:
=\frac{\sqrt{-2}}{t_{\pm}}\, \mathsf{S}^{\pm}(z). 
%&\lim_{\hbar \rightarrow 0} S^+(z)=:(e^{\phi(z)}-e^{-\phi(z)})e^{\frac{1}{\sqrt{\beta}}\widetilde{\phi}(z)}:
%=\sqrt{-2}\, \psi(z) :e^{\frac{1}{\sqrt{\beta}}\widetilde{\phi}(z)}:
%=\frac{\sqrt{-2}}{t_{+}}\, \mathsf{S}^{+}(z) ,\\
%&\lim_{\hbar \rightarrow 0} S^-(z)=:(e^{\phi(z)}-e^{-\phi(z)})e^{-\sqrt{\beta}\widetilde{\phi}(z)}:
%=\sqrt{-2}\, \psi(z) :e^{-\sqrt{\beta}\widetilde{\phi}(z)}:
%=\frac{\sqrt{-2}}{t_{-}}\, \mathsf{S}^{-}(z). 
\end{align}
\qed 

\begin{remark}
In order to study the correspondence with the undeformed screening currents $\mathsf{S}^{\pm}(z)$, 
we construct $S^{\pm}(z)$ 
using zero modes such as $\zeo$ and $\zcho$. 
However, for a rigorous treatment including integration contours, 
it should be more appropriate to replace the zero modes by suitable ratios of theta functions as in \cite{JLMP1996Lukyanov}.
\end{remark}

\section{Quadratic relations}\label{sec: quad rel}

In this section, we establish the quadratic relations of the generators $W_i(z)$. 
Depending on the choice of structure functions, several relations can be derived for $W_i(z)$.
We begin with the simplest quadratic relations, which take the following form.

\begin{proposition}\label{prop: rel WW}
We have 
\begin{align}
&W_i(z)W_i(w)+W_i(w)W_i(z)
=qz^{-1}\delta\left( \frac{q_2w}{z} \right) +q^{-1}z^{-1}\delta\Big( \frac{w}{q_2z} \Big)\qquad (i=1,2),\label{eq: WW relation i=j}\\
&f\left(\frac{w}{z}\right)zW_i(z)W_j(w)-f\left(\frac{z}{w}\right)wW_j(w)W_i(z)=0\qquad (i\neq j). \label{eq: WW relation ineqj}
\end{align}
Here, we have set 
\begin{equation}
f(z)=\exp\left( -\sum_{n>0}\frac{q_1^n+q_3^n}{n(1+q_2^{-n})} z^n\right).
\end{equation}
\end{proposition}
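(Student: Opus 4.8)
The plan is to reduce both identities to operator–product (contraction) computations for the four vertex operators $\Lambda_i^{+}(z),\Lambda_i^{-}(z),\Lambda_j^{+}(w),\Lambda_j^{-}(w)$ out of which $W_i(z)=\Lambda_i^{+}(z)+\Lambda_i^{-}(z)$ and $W_j(w)$ are built. For each ordered pair one gets a relation of the shape
\[
\Lambda_i^{\epsilon_1}(z)\,\Lambda_j^{\epsilon_2}(w)
= c_{\epsilon_1\epsilon_2}\, z^{a_{\epsilon_1\epsilon_2}}\, h^{(i,j)}_{\epsilon_1\epsilon_2}\!\Big(\tfrac{w}{z}\Big)\;
{:}\Lambda_i^{\epsilon_1}(z)\Lambda_j^{\epsilon_2}(w){:},
\]
where the scalar contraction function $h$ comes from the nonzero Heisenberg modes $\boo_{k,n}$, the constant $c$ and the monomial $z^{a}$ come from reordering the zero modes $e^{\pm\cho_k}$, $(q^{\mp1}z)^{\pm\boo_{k,0}}$, $q_1^{\pm\zeo}$, and the normal-ordered product ${:}\Lambda_i^{\epsilon_1}(z)\Lambda_j^{\epsilon_2}(w){:}$ is symmetric under exchanging the two factors (it depends only on the exponents, and the operators $\boo_{k,0}$ commute among themselves). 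Throughout I will use $q_1q_2q_3=1$, $q_2=q^2$, and the elementary identities $q_2^{n}(q_1^{n}+q_3^{n})=q_1^{-n}+q_3^{-n}$ and $\tfrac{1+q_2^{\pm n}}{1+q_2^{-n}}=q_2^{\pm n}$, which yield the two key factorizations
\[
f(u)f(q_2u)=(1-q_1^{-1}u)(1-q_3^{-1}u),\qquad f(u)f(q_2^{-1}u)=(1-q_1u)(1-q_3u).
\]

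\textbf{The case $i=j$.} Here $[\boo_{i,n},\boo_{i,m}]=n\delta_{n+m,0}$, so the contractions are the standard free–boson ones. A direct reordering will give $\Lambda_i^{+}(z)\Lambda_i^{+}(w)=q^{-1}(z-w)\,{:}\Lambda_i^{+}(z)\Lambda_i^{+}(w){:}$ and $\Lambda_i^{-}(z)\Lambda_i^{-}(w)=q(z-w)\,{:}\Lambda_i^{-}(z)\Lambda_i^{-}(w){:}$; since $z-w$ is antisymmetric under $z\leftrightarrow w$, these terms cancel pairwise in $W_i(z)W_i(w)+W_i(w)W_i(z)$. For the mixed products one finds $\Lambda_i^{+}(z)\Lambda_i^{-}(w)=\tfrac{q}{z-q_2w}\,{:}\Lambda_i^{+}(z)\Lambda_i^{-}(w){:}$ and $\Lambda_i^{-}(w)\Lambda_i^{+}(z)=\tfrac{-q}{z-q_2w}\,{:}\Lambda_i^{+}(z)\Lambda_i^{-}(w){:}$, the first expanded as a power series in $w/z$ and the second in $z/w$. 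Using the standard fact that the expansion of $\tfrac{1}{z-q_2w}$ in $w/z$ minus its expansion in $z/w$ equals $z^{-1}\delta(q_2w/z)$, together with the observation that on the support $z=q_2w$ the bosonic exponents of ${:}\Lambda_i^{+}(z)\Lambda_i^{-}(w){:}$ cancel identically and the remaining zero modes collapse, i.e. ${:}\Lambda_i^{+}(q_2w)\Lambda_i^{-}(w){:}=1$, I obtain $\Lambda_i^{+}(z)\Lambda_i^{-}(w)+\Lambda_i^{-}(w)\Lambda_i^{+}(z)=q\,z^{-1}\delta(q_2w/z)$. The mirror computation yields $\Lambda_i^{-}(z)\Lambda_i^{+}(w)+\Lambda_i^{+}(w)\Lambda_i^{-}(z)=q^{-1}z^{-1}\delta(w/(q_2z))$; adding the two gives (\ref{eq: WW relation i=j}).

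\textbf{The case $i\neq j$.} Now $[\boo_{i,n},\boo_{j,m}]=-n\tfrac{q_1^n+q_3^n}{1+q_2^{-n}}\delta_{n+m,0}$, and computing the contractions will show that the four functions $h^{(i,j)}_{\epsilon_1\epsilon_2}$ are $f(w/z)^{-1}$ for the $++$ and $--$ pairs, $f(q_2w/z)$ for $+-$, and $f(q_2^{-1}w/z)$ for $-+$. Carrying out the zero-mode reordering (now using $[\boo_{i,0},\cho_j]=-1$, $\cho_i=-\cho_j$, $\boo_{i,0}=-\boo_{j,0}$) I will get, for instance, $\Lambda_i^{+}(z)\Lambda_j^{+}(w)=q z^{-1}f(w/z)^{-1}\,{:}\Lambda_i^{+}(z)\Lambda_j^{+}(w){:}$ and $\Lambda_i^{+}(z)\Lambda_j^{-}(w)=q^{-1}z\,f(q_2w/z)\,{:}\Lambda_i^{+}(z)\Lambda_j^{-}(w){:}$, and the analogous formulas for the remaining pairs and the reversed orders. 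Multiplying $\Lambda_i^{\epsilon_1}(z)\Lambda_j^{\epsilon_2}(w)$ by $f(w/z)z$ and $\Lambda_j^{\epsilon_2}(w)\Lambda_i^{\epsilon_1}(z)$ by $f(z/w)w$, and using the two factorizations above together with $q_1q_2q_3=1$ (so that $q_2(q_1+q_3)=q_1^{-1}+q_3^{-1}$), each of the four contributions to $f(w/z)zW_i(z)W_j(w)-f(z/w)wW_j(w)W_i(z)$ collapses to the same monomial multiple of the common (symmetric) normal-ordered product, hence cancels. After multiplication by $f$ every structure function has become a polynomial, so no poles and therefore no delta terms arise, confirming that the right-hand side of (\ref{eq: WW relation ineqj}) is exactly $0$.

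\textbf{The main obstacle.} The only delicate point is the $i\neq j$ case: one must keep precise track of the monomial prefactors produced by commuting $z^{\pm\boo_{i,0}}$ past $e^{\pm\cho_j}$, and then verify that $f$ as defined in the proposition is exactly the function for which these prefactors and the ratio $f(w/z)f(q_2w/z)\big/\big(f(z/w)f(q_2^{-1}z/w)\big)$ conspire to the constant $q_2$ --- equivalently, that the polynomial identity $z^2-(q_1^{-1}+q_3^{-1})zw+q_2w^2=q_2\big(w^2-(q_1+q_3)zw+q_2^{-1}z^2\big)$ holds, and its $-+$ counterpart. Once this is in place, and once the distribution identity and the vanishing of the exponents at $z=q_2w$ are recorded, the $i=j$ case is routine bookkeeping.
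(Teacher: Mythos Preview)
Your proposal is correct and follows essentially the same route as the paper: compute the four operator products $\Lambda_i^{\epsilon_1}(z)\Lambda_j^{\epsilon_2}(w)$ in each case, use the factorizations $f(u)f(q_2^{\pm 1}u)=(1-q_1^{\mp 1}u)(1-q_3^{\mp 1}u)$ for $i\neq j$, and for $i=j$ observe that ${:}\Lambda_i^{\pm}(q_2^{\pm 1}w)\Lambda_i^{\mp}(w){:}=1$ so that the delta-function contributions survive with unit coefficient. The paper records exactly the same OPE formulas and the same factorization identity; your polynomial check $z^2-(q_1^{-1}+q_3^{-1})zw+q_2w^2=q_2\big(w^2-(q_1+q_3)zw+q_2^{-1}z^2\big)$ is the explicit form of the cancellation the paper leaves implicit.
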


By using the Fourier components $W_{i,\mu}$ in the mode expansion
$\di W_{i}(z)=\sum_{\mu \in \mathbb{Z}+\frac{1}{2}}W _{i,\mu} z^{-\mu-\frac{1}{2}}$ 
and the constants $f_{\ell}$ defined by $\di f(z)=\sum_{\ell =0} ^{\infty} f_{\ell} \, z^{\ell}$,
the relations of Proposition \ref{prop: rel WW}  can be written as 
\begin{gather}
[W_{i,r}, W_{i,s}]_+= (q_2^r+q_2^{-r})\delta_{r+s,0}, \\
W_{i,\mu}W_{j,\nu} -f_1 \, W_{j,\nu}W_{i,\mu}
= - \sum_{\ell =1}^{\infty} (f_{\ell } \, W_{i,\mu-\ell}  W_{j,\nu+\ell} -f_{\ell +1} W_{j,\nu-\ell} W_{i,\mu+\ell}) +W_{j,\nu+1}W_{i,\mu-1}. \label{eq: WW rel ineqj mode}
\end{gather}

\begin{proof}[Proof of Proposition \ref{prop: rel WW}]
The operator products among $\Lambda^{\pm}_i(z)$ take the form
\begin{align}
&\Lambda^{\pm}_i(z)\Lambda^{\pm}_i(w)=(1-w/z)q^{\mp 1} z :\Lambda^{\pm}_i(z)\Lambda^{\pm}_i(w):, \label{eq: OPE Lampm i=j 1}\\
&\Lambda^{\pm}_i(z)\Lambda^{\mp}_i(w)=\frac{q^{\pm 1}z^{-1}}{(1-q_2^{\pm 1} w/z)} :\Lambda^{\pm}_i(z)\Lambda^{\mp}_i(w): \quad (i=1,2), \label{eq: OPE Lampm i=j 2}
\end{align}
\begin{align}
&\Lambda^{\pm}_i(z)\Lambda^{\pm}_j(w)=f^{-1}(w/z) q^{\pm 1} z^{-1} :\Lambda^{\pm}_i(z)\Lambda^{\pm}_j(w):,\label{eq: OPE Lampm ineqj 1}\\
&\Lambda^{\pm}_i(z)\Lambda^{\mp}_j(w)=f(q_2^{\pm 1} w/z)q^{\mp 1} z :\Lambda^{\pm}_i(z)\Lambda^{\mp}_j(w):
\quad (i\neq j). \label{eq: OPE Lampm ineqj 2}
\end{align}
\begin{comment} %separated form
\begin{align}
&\Lambda^+_i(z)\Lambda^+_i(w)=(1-w/z)q^{-1}z :\Lambda^+_i(z)\Lambda^+_i(w):,\\
&\Lambda^-_i(z)\Lambda^-_i(w)=(1-w/z)q z :\Lambda^-_i(z)\Lambda^-_i(w):,\\
&\Lambda^+_i(z)\Lambda^-_i(w)=\frac{q}{(1-q_2w/z)z} :\Lambda^+_i(z)\Lambda^-_i(w):,\\
&\Lambda^-_i(z)\Lambda^+_i(w)=\frac{q^{-1}}{(1-q_2^{-1}w/z)z} :\Lambda^-_i(z)\Lambda^+_i(w):
\end{align}
If $i\neq j$, 
\begin{align}
&\Lambda^+_i(z)\Lambda^+_j(w)=f^{-1}(w/z):\Lambda^+_i(z)\Lambda^+_j(w):\times q z^{-1},\\
&\Lambda^-_j(z)\Lambda^-_i(w)=f^{-1}(w/z) :\Lambda^-_j(z)\Lambda^-_i(w):\times q^{-1} z^{-1},\\
&\Lambda^+_i(z)\Lambda^-_j(w)=f(q_2w/z) :\Lambda^+_i(z)\Lambda^-_j(w):\times q^{-1} z, \\
&\Lambda^-_j(z)\Lambda^+_i(w)=f(q_2^{-1}w/z):\Lambda^-_j(z)\Lambda^+_i(w): \times q z
\end{align}
\end{comment}
(\ref{eq: OPE Lampm i=j 1}) and (\ref{eq: OPE Lampm i=j 2}) yield
\begin{align}
\Lambda_i^{ \pm}(z) \Lambda_i^{ \pm}(w)+\Lambda_i^{ \pm}(w) \Lambda_i^{ \pm}(z)&=0,\\
\Lambda_i^{\pm}(z) \Lambda^{\mp}_i(w)+\Lambda_i^{\mp}(w) \Lambda_i^{\pm}(z)
&=q^{\pm 1} z^{-1} \delta\left(q_2^{\pm 1} w / z\right): \Lambda_i^{\pm}\left(q_2^{\pm} w\right) \Lambda_i^{\mp}(w):.
%&=q^{\pm 1} z^{-1} \delta\left(q_2^{\pm 1} w / z\right). 
\end{align}
Therefore by using $:\Lambda_i^{\pm}\left(q_2^{\pm} w\right) \Lambda_i^{\mp}(w):=1$, 
we obtain (\ref{eq: WW relation i=j}). 

Since we have 
\begin{align}
f(z)f(q_2^{\pm} z)=(1-q_1^{\mp 1}z)(1-q_3^{\mp 1}z),
\end{align}
(\ref{eq: OPE Lampm ineqj 1}) and (\ref{eq: OPE Lampm ineqj 2}) yield
\begin{align}
&f(w/z)z \Lambda^{\pm}_i(z)\Lambda^{\pm}_j(w)-f(z/w)w \Lambda^{\pm}_j(w)\Lambda^{\pm}_i(z)= 0,\\
&f(w/z)z \Lambda^{\pm}_i(z)\Lambda^{\mp}_j(w)-f(z/w)w \Lambda^{\mp}_j(z)\Lambda^{\pm}_i(w)=0. 
\end{align}
These lead to (\ref{eq: WW relation ineqj}). 
\end{proof}

Note that the relation (\ref{eq: WW rel ineqj mode}) is not sufficient to perform the normal ordering of the Fourier components $W_{i,\mu}$, because of the last term $W_{j,\nu+1}W_{i,\mu-1}$. 
Hence, the highest weight representations cannot be constructed solely from the above relations. 
In order to perform the normal ordering, 
we need to introduce additional generators and formulate modified relations.

\begin{definition}\label{def: T op.}
For a non-zero complex parameter $\xi$, we define 
\begin{align}
\mathcal{T}_{ij}(\xi;z)=&M^{(1)}_{ij}(\xi;z)+M^{(2)}_{ij}(\xi;z)+z^2 M^{(3)}_{ij}(\xi;z)+z^2 M^{(4)}_{ij}(\xi;z)
\quad (i\neq j). 
\label{eq: def T(xi,w)}
\end{align}
Here we set
\begin{align}
&M^{(1)}_{ij}(\xi;z)=q : \Lambda_i^{+}(\xi z) \Lambda_j^{+}(z):, \qquad 
M^{(2)}_{ij}(\xi;z)=q^{-1} : \Lambda_i^{-}(\xi z) \Lambda_j^{-}(z):,\\
&M^{(3)}_{ij}(\xi;z)=q(1-q_1 \xi)\left(1-q_3 \xi\right) : \Lambda_i^{+}(\xi z) \Lambda_{j}^{-}(z):,\\
&M^{(4)}_{ij}(\xi;z)=q^{-1}\left(1-q_1^{-1} \xi\right)\left(1-q_3^{-1} \xi\right)  \Lambda_i^{-}(\xi z) \Lambda_j^{+}(z):.
\end{align}
\begin{comment} %full form 
\begin{align}
T_{ij}(\xi;w)=&
q : \Lambda_i^{+}(\xi w) \Lambda_j^{+}(w):+q^{-1} : \Lambda_i^{-}(\xi w) \Lambda_j^{-}(w): \nonumber \\
&+q(1-q_1 \xi)\left(1-q_3 \xi\right) w^2: \Lambda_i^{+}(\xi w) \Lambda_{j}^{-}(w): \nonumber \\
& +q^{-1}\left(1-q_1^{-1} \xi\right)\left(1-q_3^{-1} \xi\right) w^2 : \Lambda_i^{-}(\xi w) \Lambda_j^{+}(w):.
%\label{eq: def T(xi,w)}
\end{align}
\end{comment}
\end{definition}

This generator satisfies the symmetry 
$\mathcal{T}_{ij}(\xi; \xi^{-\frac{1}{2}}w)=\mathcal{T}_{ji}(\xi^{-1};\xi^{\frac{1}{2}} w)$. 
Accordingly, we often fix the indices to $\mathcal{T}_{12}(\xi;z)$ and use the shorthand notation
\begin{align}
\mathcal{T}(\xi;z)=\mathcal{T}_{12}(\xi;z), \quad 
M_k(\xi;z)=M^{(k)}_{12}(\xi;z). \label{eq: T shorthand}
\end{align}
We also note that if $\xi=q_1^{\pm 1}$ or $q_3^{\pm 1}$, 
either $M^{(3)}_{ij}$ or $M^{(4)}_{ij}$ vanishes. 
Furthermore, we define the structure function $f(\xi; z)$ by 
\begin{align}
f(\xi; z)
=\frac{1}{1-\xi z}f\left(z\right)
=\exp\left\{ \sum_{n=1}^{\infty} \left( \xi^n -\frac{q_1^n+q_3^n}{(1+q_2^{-n})} \right) \frac{z^n}{n} \right\}.
\end{align}
For example, depending on the value of $\xi$,  
the structure function $f(\xi; z)$  takes the form
\begin{align}
&f(1;z)=\exp\left( \sum_{n>0}\frac{(1-q_1^{n})(1-q_3^{n})}{n(1+q_2^{-n})} z^n\right),\\
&f(q_1;z)=\exp\left(- \sum_{n>0}\frac{(1-q_1^{2n})q_3^n}{n(1+q_2^{-n})} z^n\right), \quad 
f(q_1^{-1};z)=\exp\left(\sum_{n>0}\frac{(1-q_1^{2n})q_1^{-n}}{n(1+q_2^{-n})} z^n\right),\\
&f(q_3;z)=\exp\left(- \sum_{n>0}\frac{(1-q_3^{2n})q_1^n}{n(1+q_2^{-n})} z^n\right), \quad 
f(q_3^{-1};z)=\exp\left(\sum_{n>0}\frac{(1-q_3^{2n})q_3^{-n}}{n(1+q_2^{-n})} z^n\right).
\end{align}

\begin{theorem}\label{thm: rel WWT}
Let $i \neq j$. Then it follows that 
\begin{align}
%&\frac{\xi}{1-\xi w/z}f\left(\frac{w}{z}\right) W_i(z) W_j(w) %rational form
%+\frac{1}{1-\xi^{-1} z/w}f\left(\frac{z}{w} \right) W_j(w) W_i(z)
&\xi \cdot f\left(\xi; \frac{w}{z}\right) W_i(z) W_j(w) 
+f\left(\xi^{-1} ; \frac{z}{w} \right) W_j(w) W_i(z)
= \delta \left( \frac{\xi w}{z}\right) w^{-1} \mathcal{T}_{ij}(\xi; w). \label{eq: WWT}
\end{align}
\end{theorem}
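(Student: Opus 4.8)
The plan is to reduce (\ref{eq: WWT}) to the normal-ordering computations already carried out in the proof of Proposition \ref{prop: rel WW}. Writing $W_i(z)=\Lambda^{+}_i(z)+\Lambda^{-}_i(z)$ and $W_j(w)=\Lambda^{+}_j(w)+\Lambda^{-}_j(w)$, the left-hand side of (\ref{eq: WWT}) becomes a sum of eight bilinears in the $\Lambda$'s, which organize into four pairs: the two \emph{diagonal} pairs $\big(\Lambda^{+}_i(z)\Lambda^{+}_j(w),\,\Lambda^{+}_j(w)\Lambda^{+}_i(z)\big)$ and $\big(\Lambda^{-}_i(z)\Lambda^{-}_j(w),\,\Lambda^{-}_j(w)\Lambda^{-}_i(z)\big)$, and the two \emph{mixed} pairs $\big(\Lambda^{+}_i(z)\Lambda^{-}_j(w),\,\Lambda^{-}_j(w)\Lambda^{+}_i(z)\big)$ and $\big(\Lambda^{-}_i(z)\Lambda^{+}_j(w),\,\Lambda^{+}_j(w)\Lambda^{-}_i(z)\big)$. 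Inside each pair the two normal-ordered products coincide, and the $W_i(z)W_j(w)$-term contributes a scalar prefactor that is a power series in $w/z$ while the $W_j(w)W_i(z)$-term contributes one in $z/w$. I will show that each pair collapses to $w^{-1}\delta(\xi w/z)$ times one of the summands $M^{(1)},\dots,M^{(4)}$ of $\mathcal{T}_{ij}(\xi;w)$ from Definition \ref{def: T op.}, so that the sum of the four pairs is the right-hand side of (\ref{eq: WWT}); since the relevant commutation relations are symmetric in $i,j$, the argument is uniform in $i\neq j$.

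For the diagonal pairs I would apply (\ref{eq: OPE Lampm ineqj 1}) and multiply by the structure functions $\xi f(\xi;w/z)=\xi f(w/z)/(1-\xi w/z)$ and $f(\xi^{-1};z/w)=f(z/w)/(1-\xi^{-1}z/w)$. The factors $f$ cancel, and after clearing denominators the $W_i(z)W_j(w)$-term has prefactor $\xi q^{\pm 1}/(z-\xi w)$ expanded in $w/z$, while the $W_j(w)W_i(z)$-term has $-\xi q^{\pm 1}/(z-\xi w)$ expanded in $z/w$. Their sum is $\xi q^{\pm 1}z^{-1}\delta(\xi w/z)$ by the elementary fact that the two regional expansions of $1/(z-\xi w)$ differ by $z^{-1}\delta(\xi w/z)$. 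Because the surviving product $:\Lambda^{\pm}_i(z)\Lambda^{\pm}_j(w):$ is region-independent, I may set $z=\xi w$ on the support of the delta, which yields $w^{-1}\delta(\xi w/z)\,q^{\pm 1}:\Lambda^{\pm}_i(\xi w)\Lambda^{\pm}_j(w):$; comparing with Definition \ref{def: T op.} this is $w^{-1}\delta(\xi w/z)\,M^{(1)}_{ij}(\xi;w)$ for the $+$ pair and $w^{-1}\delta(\xi w/z)\,M^{(2)}_{ij}(\xi;w)$ for the $-$ pair.

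For the mixed pairs I would use (\ref{eq: OPE Lampm ineqj 2}) together with the factorization $f(z)f(q_2^{\pm 1}z)=(1-q_1^{\mp 1}z)(1-q_3^{\mp 1}z)$ recorded in the proof of Proposition \ref{prop: rel WW}. After clearing the structure functions, the $W_i(z)W_j(w)$-term of a mixed pair acquires a prefactor of the form $c\,w\,P(w/z)/(1-\xi w/z)$ expanded in $w/z$, where $P$ is a Laurent polynomial of the shape $x^{-1}(x-q_1)(x-q_3)$ (for $\Lambda^{+}_i(z)\Lambda^{-}_j(w)$) or $x^{-1}(1-q_1x)(1-q_3x)$ (for $\Lambda^{-}_i(z)\Lambda^{+}_j(w)$) and $c$ is a monomial in $q$ that simplifies using $q_2=q^2$; the $W_j(w)W_i(z)$-term carries the negative of the same rational function, expanded in $z/w$. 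Polynomial division $P(x)=P(\xi^{-1})+(1-\xi x)Q(x)$ and the same delta-function identity then give a contribution proportional to $w\,P(\xi^{-1})\,\delta(\xi w/z)$ times the mixed normal-ordered product at $z=\xi w$. Since $P(\xi^{-1})$ equals, up to a monomial in $q$ and $\xi$, the factor $(1-q_1\xi)(1-q_3\xi)$ in the first case and $(1-q_1^{-1}\xi)(1-q_3^{-1}\xi)$ in the second, collecting the powers of $w$ and $q$ (again using $q_2=q^2$) identifies the two mixed contributions with $w^{-1}\delta(\xi w/z)\cdot w^2 M^{(3)}_{ij}(\xi;w)$ and $w^{-1}\delta(\xi w/z)\cdot w^2 M^{(4)}_{ij}(\xi;w)$; in particular the $z^2$ weights attached to $M^{(3)},M^{(4)}$ in Definition \ref{def: T op.} are exactly what makes this matching close. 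Adding the four pieces gives $w^{-1}\delta(\xi w/z)\,\mathcal{T}_{ij}(\xi;w)$, which is (\ref{eq: WWT}).

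I expect the main obstacle to be bookkeeping rather than anything conceptual. One must track, for each of the eight terms, the variable in which it is expanded, and confirm that after clearing the structure functions the forward and backward scalar prefactors are literally the same rational function up to an overall sign, so that their oppositely oriented expansions combine into a clean delta with no leftover holomorphic part. The polynomial-division step for the mixed pairs and the repeated use of $q_2=q^2$ (which reconciles the $q^{\pm 1}$ appearing through the operator products with the $q^{\pm 1}$ built into the $M^{(k)}$) are the places where a sign or a power of $q$ is most likely to slip; once those are checked, the theorem follows by direct substitution into the operator product formulas from the proof of Proposition \ref{prop: rel WW}.
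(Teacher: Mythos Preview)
Your proposal is correct and follows essentially the same approach as the paper: decompose $W_i,W_j$ into $\Lambda^{\pm}$ pieces, apply the operator products (\ref{eq: OPE Lampm ineqj 1}) and (\ref{eq: OPE Lampm ineqj 2}) together with the factorization $f(z)f(q_2^{\pm 1}z)=(1-q_1^{\mp 1}z)(1-q_3^{\mp 1}z)$, and show that each of the four pairs yields $w^{-1}\delta(\xi w/z)$ times one of $M^{(1)},\dots,M^{(4)}$. The paper is slightly more compressed---it writes the two prefactors directly as rational functions and passes to the delta in one line---whereas you spell out the polynomial-division step $P(x)=P(\xi^{-1})+(1-\xi x)Q(x)$ explicitly; but this is a difference of exposition, not of method.
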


Define the operators $\mathcal{T}_{ij; n}^{(\xi)}$ and 
the constants $f_{\ell}^{(\xi)}$ ($n\in \mathbb{Z}$, $\ell \in \mathbb{Z}_{\geq 0}$) by 
\begin{align}
&\mathcal{T}_{ij}(\xi;z)=\sum_{n\in \mathbb{Z}} \mathcal{T}_{ij;n}^{(\xi)} z^{-n}, \quad 
f\left(\xi; z \right) =\sum_{\ell=0}^{\infty} f_{\ell}^{(\xi)} z^{\ell }.
\end{align}
The relation (\ref{eq: WWT}) is equivalent to 
\begin{align}
\xi \, W_{i,\mu} W_{j,\nu} +W_{j,\nu} W_{i,\mu}
=&-\sum_{\ell =1}^{\infty} \left( \xi\, f_{\ell}^{(\xi)} W_{i,\mu-\ell} W_{j, \nu +\ell}
+f_{\ell}^{(\xi^{-1})} W_{j,\nu-\ell} W_{i, \mu +\ell}\right) \nonumber  \\
&+\xi^{\mu +\frac{1}{2}} \mathcal{T}_{ij;\mu+\nu}^{(\xi)}. 
\end{align}
By this quadratic relation, we can perform the normal ordering of $W_{i,\mu}$. 

\begin{proof}[Proof of Theorem \ref{thm: rel WWT}]
By the operator products (\ref{eq: OPE Lampm i=j 1}) and (\ref{eq: OPE Lampm i=j 2}), it follows that 
\begin{align}
&\xi \cdot f\left(\xi ;\frac{w}{z}\right) \Lambda_i^{\pm}(z) \Lambda_j^{\pm}(w)
=\frac{\xi}{1-\xi w/z} \cdot q^{\pm 1} z^{-1}:\Lambda_i^{\pm}(z) \Lambda_j^{\pm}(w):,\\
&f\left(\xi^{-1} ;\frac{z}{w}\right) \Lambda_j^{\pm}(w) \Lambda_i^{\pm}(z)
=\frac{1}{1-\xi^{-1} z/w} \cdot q^{\pm 1} w^{-1}:\Lambda_j^{\pm}(w) \Lambda_i^{\pm}(z):.
\end{align}
Thus, we have
\begin{gather}
\xi\cdot f\left(\xi ;\frac{w}{z}\right) \Lambda_i^{\pm}(z) \Lambda_j^{\pm}(w)
+f\left(\xi^{-1} ;\frac{z}{w}\right) \Lambda_j^{\pm}(w) \Lambda_i^{\pm}(z)\nonumber \\
=q^{\pm 1} w^{-1} \delta\left(\frac{\xi w}{z}\right): \Lambda_i^{\pm}\left(\xi w\right) \Lambda_j^{\pm}(w):.
\label{eq: lam pmpm}
\end{gather}
By the operator products (\ref{eq: OPE Lampm ineqj 1}) and (\ref{eq: OPE Lampm ineqj 2}), it follows that 
\begin{align}
&\xi\cdot f\left(\xi ;\frac{w}{z}\right) \Lambda_i^{\pm}(z) \Lambda_j^{\mp}(w)
=\frac{\xi\left(1-q_1^{\mp 1} w/z\right)\left(1-q_3^{\mp 1} w/z\right)}{1-\xi w/z} q^{\mp 1} z: \Lambda_i^{\pm}(z) \Lambda_j^{\mp }(w):, \\
&f\left(\xi^{-1} ;\frac{z}{w}\right) \Lambda_j^{\mp }(w) \Lambda_i^{\pm}(z)
=\frac{\left(1-q_1^{\pm 1} z/w\right)\left(1-q_3^{\pm 1} z/w\right)}{1-\xi^{-1} z/w} q^{\pm 1} w:\Lambda_j^{\mp }(w) \Lambda_i^{\pm}(z):.
\end{align}
Thus, we have
\begin{align}
&\xi \cdot f\left(\xi ;\frac{w}{z}\right) \Lambda_i^{\pm}(z) \Lambda_j^{\mp }(w)
+f\left(\xi^{-1} ;\frac{z}{w}\right) \Lambda_j^{\mp }(w) \Lambda_i^{\pm}(z)\nonumber \\
&=q^{\pm 1}\left(1-q_1^{\pm 1} \xi\right)\left(1-q_3^{\pm 1} \right) \delta\left(\frac{\xi w}{z}\right) w
: \Lambda_i^{\pm}(\xi w) \Lambda_j^{\mp}(w):.
\label{eq: lam pmmp}
\end{align}
The relations (\ref{eq: lam pmpm}) and (\ref{eq: lam pmmp}) yield (\ref{eq: WWT}). 
\end{proof}

By choosing the parameter $\xi$ appropriately, 
we obtain two pairs of commuting operators. 
As will be mentioned in Remark \ref{rem: commuting vir} below, 
these pairs degenerate into two nontrivial commuting Virasoro algebras in $\mathsf{F} \oplus \svirm$. 

\begin{theorem}
We have 
\begin{align}
[\mathcal{T}(q_1;z), \mathcal{T}(q_3;w)]
=[\mathcal{T}(q_1^{-1};z), \mathcal{T}(q_3^{-1};w)]=0.
\end{align}
\end{theorem}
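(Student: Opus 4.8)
The plan is to prove the commutativity by working entirely at the level of the constituent exponential operators $\Lambda_i^{\pm}$ and tracking the structure functions that arise from normal reordering. Recall from Definition \ref{def: T op.} that for $\xi = q_k^{\pm 1}$ one of $M^{(3)}$ or $M^{(4)}$ drops out, so each of $\mathcal{T}(q_1;z)$, $\mathcal{T}(q_3;w)$ is a sum of only \emph{three} terms; this is the structural simplification that makes the computation feasible. First I would record, for each of the three terms of $\mathcal{T}(q_1;z)$ paired against each of the three terms of $\mathcal{T}(q_3;w)$, the operator product coefficient obtained from the elementary OPEs (\ref{eq: OPE Lampm i=j 1})--(\ref{eq: OPE Lampm ineqj 2}), noting that the two orderings $z$-before-$w$ and $w$-before-$z$ differ only by the analytic continuation of a rational structure function across its pole.

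The key point is that for the special values $\xi_1 = q_1$ and $\xi_2 = q_3$ the various structure functions that appear become \emph{polynomial} (or reciprocals thereof with controllable poles), because factors like $1-q_1 q_3 = 1-q_2^{-1}$ and, crucially, the identity $q_1 q_2 q_3 = 1$ force many would-be singular denominators to cancel against numerators. Concretely, the expected mechanism is: the nine OPE coefficients pair up so that after multiplying by the chosen overall rational prefactor, the difference of the two orderings localizes — each pair either cancels identically between the two orderings, or contributes a delta-function term, and the delta-function contributions cancel \emph{among themselves} in the full nine-term sum. I would organize this as a lemma computing $\mathcal{T}(q_1;z)\mathcal{T}(q_3;w) - \mathcal{T}(q_3;w)\mathcal{T}(q_1;z)$ term by term, grouping the nine products into orbits under the symmetry $\mathcal{T}_{ij}(\xi;\xi^{-1/2}w) = \mathcal{T}_{ji}(\xi^{-1};\xi^{1/2}w)$ noted after Definition \ref{def: T op.}, which roughly halves the bookkeeping. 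The case $[\mathcal{T}(q_1^{-1};z),\mathcal{T}(q_3^{-1};w)] = 0$ then follows by the evident substitution $q \mapsto q^{-1}$, $q_k \mapsto q_k^{-1}$, which is a symmetry of all the defining formulas.

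A slicker route, which I would try first and fall back from if the explicit cancellation is too delicate, is to \emph{derive} the commutator relation from the already-established Theorem \ref{thm: rel WWT}. Writing $\delta(\xi w/z)w^{-1}\mathcal{T}_{ij}(\xi;w)$ as the symmetrized combination of $W_i(z)W_j(w)$ appearing in (\ref{eq: WWT}), one can express a product $\mathcal{T}(q_1;z)\mathcal{T}(q_3;w)$ as a fourfold sum of products $W_1 W_2 W_1 W_2$ with rational coefficients, and then use the quadratic relations (\ref{eq: WW relation i=j})--(\ref{eq: WW relation ineqj}) of Proposition \ref{prop: rel WW} to move the middle pair past each other; the commutativity should emerge once one checks that the accumulated structure functions $f(q_1;\cdot)$, $f(q_3;\cdot)$, $f(\cdot)$ satisfy the requisite functional identity, which again reduces to $q_1 q_2 q_3 = 1$ together with $q_1 q_3 = q_2^{-1}$. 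This approach has the advantage of being purely algebraic and avoiding the delta-function juggling.

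The main obstacle I anticipate is the delta-function contributions: in the naive term-by-term computation, each mixed product (a $\Lambda^+\Lambda^-$ paired against a $\Lambda^\pm\Lambda^\pm$, etc.) produces, upon subtracting the opposite ordering, a term proportional to $\delta(\text{something} \cdot w/z)$ times a normally-ordered product, and one must verify that \emph{all} such residual delta terms cancel in the sum of nine. This requires knowing the exact normal-ordered operators that multiply each delta — in particular identities of the type $:\Lambda_i^+(aw)\Lambda_j^-(w): \, :\Lambda_i^-(bw)\Lambda_j^+(w): = (\text{scalar}) \cdot \mathbf{1}$ or similar collapses — and checking that the surviving scalars, together with the prefactors $q(1-q_1\xi)(1-q_3\xi)$ built into $\mathcal{T}$, sum to zero exactly when $\{\xi\} = \{q_1, q_3\}$. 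That the choice $(\xi_1,\xi_2) = (q_1,q_3)$ (as opposed to, say, $(q_1,q_1)$) is precisely what makes this work is the content one must extract; I expect the identity $(1-q_1 q_3)(1-q_2^{-1}) = (1-q_2^{-1})^2$ and the relation $q_1 q_3 = q_2^{-1}$ to be the arithmetic heart of the cancellation.
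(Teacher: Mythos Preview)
Your first approach---computing the nine products $M_k(q_1;z)M_\ell(q_3;w)$ from the elementary $\Lambda^\pm$ OPEs and checking that the resulting delta-function contributions cancel in pairs---is exactly what the paper does. One simplification you did not anticipate: the four products with $k,\ell\in\{1,2\}$ are already normal-ordered (structure function $=1$) and hence commute trivially, so only the five products involving $M_3$ require work, and their delta terms cancel in two explicit pairs rather than across all nine; your ``slicker route'' via Theorem \ref{thm: rel WWT} is not used and would be awkward since $\mathcal{T}$ sits behind a delta function there.
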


\begin{proof}
We prove only $[\mathcal{T}(q_1;z), \mathcal{T}(q_3;z)]=0$. 
The commutation relation $[\mathcal{T}(q_1^{-1};z), \mathcal{T}(q_3^{-1};w)]=0$ can be shown similarly. 
The operator products among $M_k(q_1;z)$ and $M_{\ell}(q_3;w)$ take the form 
\begin{align}
&M_k(q_1;z)M_{\ell}(q_3;w)=:M_k(q_1;z)M_{\ell}(q_3;w):,\\
&M_{\ell}(q_3;w)M_k(q_1;z)=:M_{\ell}(q_3;w)M_k(q_1;z):\quad (\forall k, \ell \in \{1,2\}), 
\end{align}
\begin{align}
&M_1(q_1;z)M_3(q_3;w)=\frac{1-q_1^{-2}w/z}{1-w/z}\cdot q_1^2 :M_1(q_1;z)M_3(q_3;w):, \\
&M_3(q_3;w)M_1(q_1;z)=\frac{1-q_1^{2}z/w}{1-z/w} :M_3(q_3;w)M_1(q_1;z):, \\
&M_2(q_1;z)M_3(q_3;w)=\frac{1-q_2^{-1} w/z}{1-q_1^{-1}q_3w/z}\cdot q_1^{-2} :M_2(q_1;z)M_3(q_3;w):, \\
&M_3(q_3;w)M_2(q_1;z)=\frac{1-q_2 z/w}{1-q_1q_3^{-1} z/w} :M_3(q_3;w)M_1(q_1;z):, 
\end{align}
\begin{align}
&M_3(q_1;z)M_1(q_3;w)=\frac{1-q_3^{2}w/z}{1-w/z} :M_3(q_1;z)M_1(q_3;w):, \\
&M_1(q_3;w)M_3(q_1;z)=\frac{1-q_3^{-2}z/w}{1-z/w} \cdot q_3^2:M_1(q_3;w)M_3(q_1;z):, \\
&M_3(q_1;z)M_2(q_3;w)=\frac{1-q_2w/z}{1-q_1^{-1}q_3w/z} :M_3(q_1;z)M_2(q_3;w):, \\
&M_2(q_3;w)M_3(q_1;z)=\frac{1-q_2^{-1}z/w}{1-q_1q_3^{-1}z/w} \cdot q_3^{-2} :M_2(q_3;w)M_3(q_1;z):, 
\end{align}
\begin{align}
&M_3(q_1;z)M_3(q_3;w)\\
&= \left(1-\frac{w}{q_1^2 z}\right) \left(1-\frac{q_3^2 w}{z}\right) \left(1-\frac{q_2w}{z}\right)
\left(1-\frac{w}{q_2z}\right) \cdot q_1^2 z^4 :M_3(q_1;z)M_3(q_3;w):, \nonumber \\
&M_3(q_3;w)M_3(q_1;z)\\
&=\left(1-\frac{q_1^2 z}{w}\right) \left(1-\frac{z}{q_3^2 w}\right) \left(1-\frac{q_2 z}{w}\right)
\left(1-\frac{z}{q_2w}\right)\cdot q_3^2w^4:M_3(q_3;z)M_3(q_1;w):.\nonumber 
\end{align}
Note that $M_4(q_1;z)=M_4(q_3;z)=0$. 
These operator products yield 
\begin{align}
&M_k(q_1;z)M_{\ell}(q_3;w)-M_{\ell}(q_3;w)M_k(q_1;z)=0\qquad (\forall k,\ell \in \{1,2\}), \\
&M_1(q_1;z)M_3(q_3;w)-M_3(q_3;w)M_1(q_1;z)
=-(1-q_1^2) \delta \left( \frac{w}{z} \right) :M_1(q_1;w)M_3(q_3;w):,\\
&M_2(q_1;z)M_3(q_3;w)-M_3(q_3;w)M_2(q_1;z)
=-(1-q_1^{-2}) \delta \left( \frac{q_3 w}{q_1z} \right):M_2(q_1;z)M_3(q_3;w):,\\
&M_3(q_1;z)M_1(q_3;w)-M_1(q_3;w)M_3(q_1;z)
=(1-q_3^2)\delta \left( \frac{w}{z} \right) :M_3(q_1;w)M_1(q_3;w):,\\
&M_3(q_1;z)M_2(q_3;w)-M_2(q_3;w)M_3(q_1;z)
=(1-q_3^{-2}) \delta \left( \frac{q_3 w}{q_1z} \right):M_3(q_1;z)M_2(q_3;w):,\\
&M_3(q_1;z)M_3(q_3;w)-M_3(q_3;w)M_3(q_1;z)=0.
\end{align}
Noting that 
\begin{align}
&(1-q_1^2)  :M_1(q_1;w)M_3(q_3;w):
=(1-q_3^2):M_3(q_1;w)M_1(q_3;w):,\\
&(1-q_1^{-2}) \delta \left( \frac{q_3 w}{q_1z} \right):M_2(q_1;z)M_3(q_3;w):w^2
=(1-q_3^{-2}) \delta \left( \frac{q_3 w}{q_1z} \right):M_3(q_1;z)M_2(q_3;w):z^2,
\end{align}
we obtain 
\begin{align}
[\mathcal{T}(q_1;z), \mathcal{T}(q_3;w)]=0.
\end{align}
\end{proof}

If $\xi=q_1^{\pm 1}$ or $\xi=q_3^{\pm 1}$, the operator $\mathcal{T}(\xi;z)$
satisfy the quadratic relation of the $q$-deformed Virasoro algebra \cite{SKAO1995quantum}. 

\begin{theorem}\label{thm: rel TT}
We obtain 
\begin{gather}
g^{(1)}\left( \frac{w}{z} \right) \mathcal{T}(q_1^{\pm 1};z)\mathcal{T}(q_1^{\pm 1};w)
-g^{(1)}\left( \frac{z}{w} \right) \mathcal{T}(q_1^{\pm 1};w)\mathcal{T}(q_1^{\pm 1};z)\nonumber  \\
=-\frac{(1-q_1^{2})(1-q_3/q_1)}{1-q_2^{-1}} \left(  \delta \left( \frac{w}{q_2z}\right) - \delta \left( \frac{q_2w}{ z}\right) \right), \label{eq: rel TT q1}\\
g^{(3)}\left( \frac{w}{z} \right) \mathcal{T}(q_3^{\pm 1};z)\mathcal{T}(q_3^{\pm 1};w)
-g^{(3)}\left( \frac{z}{w} \right) \mathcal{T}(q_3^{\pm 1};w)\mathcal{T}(q_3^{\pm 1};z)\nonumber \\
=-\frac{(1-q_3^{2})(1-q_1/q_3)}{1-q_2^{-1}} \left(  \delta \left( \frac{w}{q_2z}\right) - \delta \left( \frac{q_2w}{ z}\right) \right). \label{eq: rel TT q3}
\end{gather}
Here, the structure functions $g^{(1)}\left( z\right)$ and $g^{(3)}\left( z\right)$ are defined by 
\begin{align}
g^{(1)}\left( z\right)=\exp \left( \sum_{n>0} \frac{(1-q_1^{2n})(1-q_3^n/q_1^n)}{n(1+q_2^{-n})} z^n \right), \\
g^{(3)}\left( z\right)=\exp \left( \sum_{n>0} \frac{(1-q_3^{2n})(1-q_1^n/q_3^n)}{n(1+q_2^{-n})} z^n \right).  
\end{align}
\end{theorem}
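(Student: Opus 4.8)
The plan is to follow the strategy of the proofs of Proposition~\ref{prop: rel WW} and of the commuting relations above: expand $\mathcal{T}(q_k^{\pm1};z)$ as a finite sum of normal-ordered exponentials, compute all pairwise operator products, and check that after multiplication by $g^{(k)}$ the antisymmetrized combination collapses to the two stated delta-function terms. By the symmetry $\mathcal{T}_{ij}(\xi;\xi^{-1/2}w)=\mathcal{T}_{ji}(\xi^{-1};\xi^{1/2}w)$ and the parallel structure of the four relations, it suffices to treat the $+$ case of (\ref{eq: rel TT q1}), i.e.\ $\xi=q_1$; the case $\xi=q_3$ is identical with $q_1$ and $q_3$ interchanged, and the cases $\xi=q_1^{-1},q_3^{-1}$ follow in the same way. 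Since $M_4(q_1;z)=0$, we have $\mathcal{T}(q_1;z)=M_1(q_1;z)+M_2(q_1;z)+z^2M_3(q_1;z)$, so $\mathcal{T}(q_1;z)\mathcal{T}(q_1;w)$ is a sum of nine operator products $M_a(q_1;z)M_b(q_1;w)$ with $a,b\in\{1,2,3\}$.

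First I would record these nine operator products using (\ref{eq: OPE Lampm i=j 1})--(\ref{eq: OPE Lampm ineqj 2}) together with $f(z)f(q_2^{\pm1}z)=(1-q_1^{\mp1}z)(1-q_3^{\mp1}z)$. Each equals a rational prefactor in $w/z$ times the common normal-ordered product $:M_a(q_1;z)M_b(q_1;w):$, which is symmetric under simultaneously exchanging $z\leftrightarrow w$ and $a\leftrightarrow b$. The decisive point is that only the products $M_1(q_1;z)M_2(q_1;w)$ and $M_2(q_1;z)M_1(q_1;w)$ contribute to the right-hand side: their prefactors carry a double pole (at $w/z=q_2^{-1}$ and $w/z=q_2$ respectively), built from the two contractions $\Lambda_1^{+}(\ast)\Lambda_1^{-}(\ast)$ and $\Lambda_2^{+}(\ast)\Lambda_2^{-}(\ast)$, and at these points the normal-ordered products collapse to constants — a direct check of the exponents (using $q_1q_3=q_2^{-1}$) gives $:M_1(q_1;q_2w)M_2(q_1;w):=:M_2(q_1;q_2^{-1}w)M_1(q_1;w):=1$, the $\boo_{1,n}$- and $\boo_{2,n}$-oscillators and all the zero modes cancelling. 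For every other product, $M_1M_1$, $M_2M_2$, $M_3M_3$, $M_1M_3$, $M_3M_1$, $M_2M_3$, $M_3M_2$, the normal-ordered product fails to collapse to a constant, so that channel can contribute only an operator-valued term; since the right-hand side is a c-number, it must be symmetrized away by $g^{(1)}$.

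Next I would assemble $g^{(1)}(w/z)\,\mathcal{T}(q_1;z)\mathcal{T}(q_1;w)-g^{(1)}(z/w)\,\mathcal{T}(q_1;w)\mathcal{T}(q_1;z)$ term by term. The structure function $g^{(1)}$ is chosen so that, for each of the seven non-collapsing channels, $g^{(1)}(w/z)$ times the prefactor is a meromorphic function symmetric under $w\leftrightarrow z$, so that these channels drop out of the antisymmetrized difference; this follows from the functional identities for $f$ and the definition of $g^{(1)}$. For the two surviving channels the double pole of $g^{(1)}(w/z)$ times the prefactor is reduced to a simple pole, at $w/z=q_2^{-1}$ from $M_1M_2$ and at $w/z=q_2$ from $M_2M_1$; extracting the residues by the standard identity relating the two expansions of a simple pole and using $:M_1(q_1;q_2w)M_2(q_1;w):=1$ etc.\ gives $\mathcal{C}\,(\delta(w/(q_2z))-\delta(q_2w/z))$ with $\mathcal{C}=-\frac{(1-q_1^2)(1-q_3/q_1)}{1-q_2^{-1}}$; repeating the computation with $q_1\leftrightarrow q_3$ yields (\ref{eq: rel TT q3}).

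The main obstacle is the bookkeeping, within which the two genuinely delicate steps are: (i) verifying that a single $g^{(1)}$ symmetrizes all seven non-collapsing channels simultaneously — this requires identifying the precise functional equation satisfied by $g^{(1)}$ (a repackaging of the identities $f(z)f(q_2^{\pm1}z)=(1-q_1^{\mp1}z)(1-q_3^{\mp1}z)$, in which $q_1q_3=q_2^{-1}$ is used repeatedly); and (ii) controlling the reduction of the double pole in the $M_1M_2$ and $M_2M_1$ channels and computing the residue of the resulting simple pole, which is where the constant $\mathcal{C}$ is produced. Once these are settled, the rest of the identity is forced.
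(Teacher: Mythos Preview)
Your overall strategy --- expand $\mathcal{T}(q_1;z)$ as $M_1+M_2+z^2M_3$, compute the nine pairwise operator products, multiply by $g^{(1)}$, and antisymmetrize --- matches the paper, and your identification of the $M_1M_2$ and $M_2M_1$ channels as the source of the c-number delta functions on the right-hand side is correct. However, there is a genuine gap in step~(i).

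Your claim that $g^{(1)}$ renders each of the seven remaining channels individually symmetric, so that each drops out of the antisymmetrized difference, is false for the four cross channels $M_1M_3$, $M_3M_1$, $M_2M_3$, $M_3M_2$. After multiplication by $g^{(1)}$, the prefactor in each of these still carries a simple pole at $w=z$; for instance one finds
\[
g^{(1)}\Big(\frac{w}{z}\Big)M_1(q_1;z)M_3(q_1;w)-g^{(1)}\Big(\frac{z}{w}\Big)M_3(q_1;w)M_1(q_1;z)
=-(1-q_1^{2})\,\delta\Big(\frac{w}{z}\Big)\,:M_1(q_1;w)M_3(q_1;w):,
\]
which is an operator-valued delta function, not zero. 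The correct mechanism is that these four contributions cancel \emph{in pairs}: the $(1,3)$-channel against the $(3,1)$-channel (i.e.\ the term coming from $M_3(q_1;z)M_1(q_1;w)$), and likewise $(2,3)$ against $(3,2)$. The delta functions involved sit at $w=z$, not at $w=q_2^{\pm1}z$, and come with opposite signs. Your sentence ``the normal-ordered product fails to collapse to a constant, so \dots\ it must be symmetrized away by $g^{(1)}$'' is circular, since it presupposes the identity you are trying to prove. The diagonal channels $M_1M_1$, $M_2M_2$, $M_3M_3$ do vanish individually as you say, and the paper's proof organizes the computation exactly this way: it first observes that $M_1+M_2$ alone already satisfies the $q$-deformed Virasoro relation with structure function $g^{(k)}$ and the full constant $\mathcal{C}^{(k)}$, then checks the cross terms with $M_3$ (or $M_4$ in the $\xi=q_k^{-1}$ case) cancel pairwise, and that $M_3M_3$ contributes nothing.
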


While the $q$-deformed Virasoro algebra is typically realized by two vertex operators, 
the operator $\mathcal{T}(q_k^{\pm 1};z)$ ($k=1,3$) 
is constructed from three vertex operators $M_i(q_k^{\pm 1};z)$ ($i=1,2,3$ or $i=1,2,4$). 
Hence, it is rather nontrivial that they satisfy the relation of the $q$-deformed Virasoro algebra.
We also note that the structure functions $g^{(1)}(z)$ and $g^{(3)}(z)$ 
exhibit the same parameter dependence as two commutative $\mathcal{E}_1$'s 
embedded in $\mathcal{E}_2$ \cite{FJMM2016branching,FJM2021Evaluation}. 
In fact,   $\mathcal{T}(q_k;z)$  ($k=1,3$)  correspond to the fused currents of  \cite{FJMM2016branching};  
in our notation, they can be given by
\begin{align}
&\lim_{w \to z}  \bigg(1-\frac{z}{w}\bigg)  \bigg(1-\frac{q_3 z}{q_1w} \bigg) X_1(q_1w) X_2(z), \\
&\lim_{w \to z}  \bigg(1-\frac{z}{w} \bigg)  \bigg(1-\frac{q_1 z}{q_3w} \bigg) X_1(q_3w) X_2(z)
%\lim_{w \to z}  \Big(1-\frac{z}{w}\Big)  \Big(1-\frac{z}{q_k^2 \, q_2w} \Big) X_1(q_kw) X_2(z), 
\end{align}
after decoupling Cartan components.

\begin{proof}[Proof of Theorem \ref{thm: rel TT}]
%We prove only (\ref{eq: rel TT q1}). The relation (\ref{eq: rel TT q3}) can be shown in a similar manner. 
We first note that the sum of the first two terms in $\mathcal{T}(q_k^{\pm 1};z)$ ($k=1,3$), 
namely $M_1(q_k^{\pm 1};z)+M_2(q_k^{\pm 1};z)$, 
satisfies the quadratic relation of the $q$-deformed Virasoro algebra. That is,
\begin{align}
&g^{(k)}\left( \frac{w}{z} \right) \left\{ M_1(q_k^{\pm 1};z)+M_2(q_k^{\pm 1};z)\right\} 
\left\{ M_1(q_k^{\pm 1};w)+M_2(q_k^{\pm 1};w) \right\}\nonumber \\
&\quad -g^{(k)}\left( \frac{z}{w} \right) \left\{ M_1(q_k^{\pm 1};w)+M_2(q_k^{\pm 1};w) \right\}
\left\{ M_1(q_k^{\pm 1};z)+M_2(q_k^{\pm 1};z) \right\}  \\
&=\mathcal{C}^{(k)}\cdot  \left(  \delta \left( \frac{w}{q_2z}\right) - \delta \left( \frac{q_2w}{ z}\right) \right), \qquad \mathcal{C}^{(k)}\equiv -\frac{(1-q_k^2)(1-q_2^{-1}q_k^{-2})}{1-q_2^{-1}}. \nonumber
\end{align}
This relation can be proved by the same argument 
as in the standard free field realization of the $q$-deformed Virasoro algebra. 
As for the terms involving $M_3(q_k;z)$ or $M_4(q_{k}^{-1};z)$, 
we can proceed in the usual way. Let $i_{+}=3$ and $i_{-}=4$. 
We have 
\begin{align}
&g^{(k)}\left( \frac{w}{z} \right)M_{i_{\pm}}(q_k^{\pm 1};z)M_{i_{\pm}}(q_k^{\pm 1};w)-
g^{(k)}\left( \frac{z}{w} \right)M_{i_{\pm}}(q_k^{\pm 1};w)M_{i_{\pm}}(q_k^{\pm 1};z)=0, \\ %next M1 M_{ipm}
&g^{(k)}\left( \frac{w}{z} \right)M_{1}(q_k^{\pm 1};z)M_{i_{\pm}}(q_k^{\pm 1};w)-
g^{(k)}\left( \frac{z}{w} \right)M_{i_{\pm}}(q_k^{\pm 1};w)M_{1}(q_k^{\pm 1};z)\nonumber \\
&\qquad \qquad \quad   =-(1-q_k^2) \delta \left( \frac{w}{z} \right) 
:M_{1}(q_k^{\pm 1};w)M_{i_{\pm}}(q_k^{\pm 1};w):, \\
&g^{(k)}\left( \frac{w}{z} \right)M_{i_{\pm}}(q_k^{\pm 1};z)M_{1}(q_k^{\pm 1};w)-
g^{(k)}\left( \frac{z}{w} \right)M_{1}(q_k^{\pm 1};w)M_{i_{\pm}}(q_k^{\pm 1};z)\nonumber\\
&\qquad \qquad \quad =(1-q_k^2) \delta \left( \frac{w}{z} \right) 
:M_{1}(q_k^{\pm 1};w)M_{i_{\pm}}(q_k^{\pm 1};w):, \\  %next M2 M_{ipm}
&g^{(k)}\left( \frac{w}{z} \right)M_{2}(q_k^{\pm 1};z)M_{i_{\pm}}(q_k^{\pm 1};w)-
g^{(k)}\left( \frac{z}{w} \right)M_{i_{\pm}}(q_k^{\pm 1};w)M_{2}(q_k^{\pm 1};z)\nonumber\\
&\qquad \qquad \quad =-(1-q_k^{-2}) \delta \left( \frac{w}{z} \right) 
:M_{2}(q_k^{\pm 1};w)M_{i_{\pm}}(q_k^{\pm 1};w):, \\
&g^{(k)}\left( \frac{w}{z} \right)M_{i_{\pm}}(q_k^{\pm 1};z)M_{2}(q_k^{\pm 1};w)-
g^{(k)}\left( \frac{z}{w} \right)M_{2}(q_k^{\pm 1};w)M_{i_{\pm}}(q_k^{\pm 1};z)\nonumber\\
&\qquad \qquad \quad =(1-q_k^{-2}) \delta \left( \frac{w}{z} \right) 
:M_{2}(q_k^{\pm 1};w)M_{i_{\pm}}(q_k^{\pm 1};w):. 
\end{align}
By adding these relations, we obtain the theorem. 
\end{proof}

The quadratic relations between $W_i(z)$ and $\mathcal{T}(q_k^{\pm 1};w)$ are given as follows. 
%Let us omit the proof since it is straightforward. 

\begin{theorem}For $k=1,3$, we obtain 
\begin{gather}
q_k^{\pm 1} \cdot f\left( q_k^{\pm 1} ; \frac{w}{z} \right) W_1(z) \mathcal{T}(q_k^{\pm 1};w) 
- f\left( q_k^{\mp 1};\frac{z}{w} \right) \mathcal{T}(q_k^{\pm 1} ;w)  W_1(z)\nonumber \\
=\pm q^{\pm 1} (q_k-q_k^{-1}) \delta \bigg( \frac{q_k^{\pm 1} w}{q_2^{\pm 1}z} \bigg) W_2(w),\\
q_k^{\mp 1} \cdot f\Big( q_k^{\mp 1} ; \frac{q_k^{\pm 1} w}{z} \Big) W_2(z) \mathcal{T}(q_k^{\pm 1};w) 
- f\Big( q_k^{\pm 1};\frac{z}{q_k^{\pm 1} w} \Big) \mathcal{T}(q_k^{\pm 1};w)  W_2(z)\nonumber\\
=\mp q^{\mp 1} (q_k-q_k^{-1})\delta \left( \frac{q_2^{\pm 1}w}{z} \right)W_1(q_k^{\pm 1} w).
\end{gather}
\begin{comment}
\begin{gather} %only T(q_k^+z)
q_k \cdot f\left( q_k ; \frac{w}{z} \right) W_1(z) \mathcal{T}(q_k;w) 
- f\left( q_k^{-1};\frac{z}{w} \right) \mathcal{T}(q_k;w)  W_1(z)\nonumber \\
=q(q_k-q_k^{-1})\delta \left( \frac{q_kw}{q_2z} \right)W_2(w),\\
q_k^{-1} \cdot f\left( q_k^{-1} ; \frac{q_kw}{z} \right) W_2(z) \mathcal{T}(q_k;w) 
- f\Big( q_k;\frac{z}{q_kw} \Big) \mathcal{T}(q_k;w)  W_2(z)\nonumber\\
=-q^{-1} (q_k-q_k^{-1})\delta \left( \frac{q_2w}{z} \right)W_1(q_k w).
\end{gather}
\end{comment}
\end{theorem}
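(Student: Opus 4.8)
The plan is to run the same calculation that established Theorems~\ref{thm: rel WWT} and~\ref{thm: rel TT}: one expands each side into normal-ordered vertex operators and reduces everything to the pairwise operator products (\ref{eq: OPE Lampm i=j 1})--(\ref{eq: OPE Lampm ineqj 2}). Recall from Definition~\ref{def: T op.} that when $\xi=q_k$ the prefactor of $M^{(4)}$ vanishes, while when $\xi=q_k^{-1}$ the prefactor of $M^{(3)}$ vanishes, so that $\mathcal{T}(q_k;w)=M_1(q_k;w)+M_2(q_k;w)+w^2M_3(q_k;w)$ in the $+$ case and $\mathcal{T}(q_k^{-1};w)=M_1(q_k^{-1};w)+M_2(q_k^{-1};w)+w^2M_4(q_k^{-1};w)$ in the $-$ case. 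Since each $M_j(q_k^{\pm1};w)$ is a normal-ordered product of the form $:\!\Lambda_1^{\epsilon}(q_k^{\pm1}w)\Lambda_2^{\epsilon'}(w)\!:$, every operator product $\Lambda_i^{\delta}(z)M_j(q_k^{\pm1};w)$ and $M_j(q_k^{\pm1};w)\Lambda_i^{\delta}(z)$ factorizes into a product of two of the OPEs already recorded, hence equals a rational function of $w/z$ times a triple normal-ordered product.

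First I would write out these operator products for $i=1$, multiply the products $\Lambda_1^{\delta}(z)M_j(q_k^{\pm1};w)$ by $q_k^{\pm1}f(q_k^{\pm1};w/z)$ and the products $M_j(q_k^{\pm1};w)\Lambda_1^{\delta}(z)$ by $f(q_k^{\mp1};z/w)$ exactly as they appear on the left-hand side, and subtract. The structure functions $f(q_k^{\pm1};x)=(1-q_k^{\pm1}x)^{-1}f(x)$ were tuned precisely so that this difference collapses: using $f(z)f(q_2^{\pm}z)=(1-q_1^{\mp1}z)(1-q_3^{\mp1}z)$ (already used in the proof of Proposition~\ref{prop: rel WW}), the $f$-factors coming from contracting $\Lambda_1^{\delta}(z)$ against the two vertex operators inside $M_j$ combine with $f(q_k^{\pm1};\,\cdot\,)$ into a polynomial of the form $(1-q_1^{\mp1}\,\cdot\,)(1-q_3^{\mp1}\,\cdot\,)$, and the choice $\xi=q_k^{\pm1}$ makes one of these linear factors (or the factor $1-q_k^{\pm1}\,\cdot\,$ produced by an equal-sign contraction $\Lambda_1^{\delta}\Lambda_1^{\delta}$) cancel a pole of the rational prefactor. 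What remains of each antisymmetrized pairing is then either $0$ or a single simple pole in $w/z$, that is, a delta function times a triple normal-ordered product.

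The surviving deltas arise from the contractions in which the factor $\Lambda_1^{\mp}(z)$ of $W_1(z)$ collides with a factor $\Lambda_1^{\pm}(q_k^{\pm1}w)$ sitting inside some $M_j$; at the support of the corresponding delta those two factors normal-order to $1$ by $:\!\Lambda_i^{\pm}(q_2^{\pm}v)\Lambda_i^{\mp}(v)\!:\,=1$, leaving only $\Lambda_2^{\pm}(w)$. Summing over the sign of the $W_1$-component and over $j$, the surviving pieces reassemble into $\Lambda_2^{+}(w)+\Lambda_2^{-}(w)=W_2(w)$, and the total numerical coefficient comes out to $\pm q^{\pm1}(q_k-q_k^{-1})$ after using $q_1q_2q_3=1$ and the identity $(1-q_1q_k)(1-q_3q_k)=(1-q_k^2)(1-q_2^{-1})$ valid for $q_k\in\{q_1,q_3\}$ (together with its counterpart for the $M^{(4)}$ prefactor). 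This proves the first relation. For the second relation one repeats the computation with $W_2(z)$ in place of $W_1(z)$, where the collisions that now survive are those of $\Lambda_2^{\mp}(z)$ with the factor $\Lambda_2^{\pm}(w)$ inside $M_j$; these annihilate and leave $\Lambda_1^{\pm}(q_k^{\pm1}w)$, so the right-hand side reassembles into $W_1(q_k^{\pm1}w)$. Alternatively, the second relation can be deduced from the first by the symmetry $\mathcal{T}_{ij}(\xi;\xi^{-1/2}w)=\mathcal{T}_{ji}(\xi^{-1};\xi^{1/2}w)$ together with the exchange $1\leftrightarrow2$.

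The main obstacle is the bookkeeping, which is delicate. One must keep careful track of the $q_2^{\pm1}$- and $q_k^{\pm1}$-shifts in the arguments of all the deltas, determine which of the antisymmetrized pairings vanish identically, and verify that the deltas carried by the genuinely new term---the one built from $M_3$ or $M_4$, which has no counterpart in the usual two-vertex realization of the $q$-deformed Virasoro algebra---conspire with those coming from $M_1$ and $M_2$ so that precisely one delta survives on the right, with precisely the stated coefficient. It is exactly the special value $\xi=q_k^{\pm1}$ that makes this collapse occur, both by forcing one of $M^{(3)},M^{(4)}$ to vanish and by turning would-be double poles into simple ones; for generic $\xi$ no such closed relation between $W_i(z)$ and $\mathcal{T}(\xi;w)$ should be expected.
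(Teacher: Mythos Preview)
Your proposal is correct and follows essentially the same route as the paper: decompose $\mathcal{T}(q_k^{\pm1};w)$ into its three surviving $M_j$-pieces, compute each antisymmetrized pairing $q_k^{\pm1}f(q_k^{\pm1};w/z)\,\Lambda_1^{\delta}(z)M_j-f(q_k^{\mp1};z/w)\,M_j\Lambda_1^{\delta}(z)$ from the OPEs (\ref{eq: OPE Lampm i=j 1})--(\ref{eq: OPE Lampm ineqj 2}), observe that several vanish while the rest yield single deltas, and check that the spurious $\delta(q_k^{\pm1}w/z)$ contribution from the $M_3$ (or $M_4$) block cancels against the one produced by $M_2$ (or $M_1$), leaving exactly the $\delta(q_k^{\pm1}w/(q_2^{\pm1}z))$ terms that reassemble into $W_2(w)$. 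The paper writes out these six pairings explicitly for the $W_1$--$\mathcal{T}(q_k;w)$ case and then invokes the same ``similarly'' that you do for the remaining three cases.
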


\begin{proof}
We prove only the relation between $W_1(z)$ and $\mathcal{T}(q_k;w)$. 
The other relations can be shown similarly. 
A direct calculation yields 
\begin{align}
&q_k \cdot f\left( q_k ; \frac{w}{z} \right) \Lambda^+_1(z) M_1(q_k;w) -
f\left( q_k^{-1} ; \frac{z}{w} \right) M_1(q_k;w)\Lambda^+_1(z)=0, \\
&q_k \cdot f\left( q_k ; \frac{w}{z} \right) \Lambda^-_1(z) M_2(q_k;w) -
f\left( q_k^{-1} ; \frac{z}{w} \right) M_2(q_k;w)\Lambda^-_1(z)=0, \\
&q_k \cdot f\left( q_k ; \frac{w}{z} \right) \Lambda^+_1(z) M_2(q_k;w) -
f\left( q_k^{-1} ; \frac{z}{w} \right) M_2(q_k;w)\Lambda^+_1(z)\nonumber \\
&\qquad \qquad\quad =(q_k-q_k^{-1})\delta \Big( \frac{q_kw}{z} \Big) :\Lambda^+_1(z) M_2(q_k;w):, \\
&q_k \cdot f\left( q_k ; \frac{w}{z} \right) \Lambda^-_1(z) M_1(q_k;w) -
f\left( q_k^{-1} ; \frac{z}{w} \right) M_1(q_k;w)\Lambda^-_1(z)\nonumber \\
&\qquad \qquad\quad =(q_k-q_k^{-1})\delta \Big( \frac{q_kw}{q_2z} \Big) :\Lambda^-_1(z) M_1(q_k;w):\\
&q_k \cdot f\left( q_k ; \frac{w}{z} \right) \Lambda^+_1(z) M_3(q_k;w) -
f\left( q_k^{-1} ; \frac{z}{w} \right) M_3(q_k;w)\Lambda^+_1(z)=0, \\
&q_k \cdot f\left( q_k ; \frac{w}{z} \right) \Lambda^-_1(z) M_3(q_k;w) -
f\left( q_k^{-1} ; \frac{z}{w} \right) M_3(q_k;w)\Lambda^-_1(z) \nonumber \\
&\qquad \qquad\quad = \frac{q_2^{-1}z^{-1}w^{-1}}{1-q_2^{-1}} \left( \delta \Big( \frac{q_kw}{z}\Big) 
-\delta \Big( \frac{q_kw}{q_2z} \Big)\right):\Lambda^-_1(z) M_3(q_k;w):.
\end{align}
Noting that 
\begin{align}
&(q_k-q_k^{-1})\delta \Big( \frac{q_kw}{z} \Big) :\Lambda^+_1(z) M_2(q_k;w):+
\frac{q_2^{-1}z^{-1}w^{-1}}{1-q_2^{-1}}  \delta \Big( \frac{q_kw}{z}\Big) :\Lambda^-_1(z) M_3(q_k;w):w^2=0,\\
&(q_k-q_k^{-1}) \delta \Big( \frac{q_kw}{q_2z} \Big):\Lambda^-_1(z) M_1(q_k;w):
=q(q_k-q_k^{-1}) \delta \Big( \frac{q_kw}{q_2z} \Big)\Lambda^+_2(w), \\
&-\frac{q_2^{-1}z^{-1}w^{-1}}{1-q_2^{-1}} \delta \Big( \frac{q_kw}{q_2z} \Big) 
:\Lambda^-_1(z) M_3(q_k;w):w^2=
q(q_k-q_k^{-1}) \delta \Big( \frac{q_kw}{q_2z} \Big) :\Lambda^-_2(w):, 
\end{align}
we have 
\begin{gather}
q_k \cdot f\left( q_k ; \frac{w}{z} \right) W_1(z) \mathcal{T}(q_k;w) 
- f\left( q_k^{-1};\frac{z}{w} \right) \mathcal{T}(q_k;w)  W_1(z)\nonumber \\
=q(q_k-q_k^{-1})\delta \left( \frac{q_kw}{q_2z} \right)W_2(w).
\end{gather}
\end{proof}

%The similar relations hold for $\mathcal{T}(q_1^{-1};w)$ and $\mathcal{T}(q_3^{-1};w)$. We shall omit their details. 
The degenerate limit of the operator $\mathcal{T}(\xi;z)$ takes the following form. 
From this limit, we can obtain the generator $T(z)$ of \svir.

\begin{proposition}\label{prop: limit calT}
Let $x$ be a complex parameter independent of $\hbar$, and let $\xi =e^{x\hbar}$. 
Then the $\hbar$-expansion of $\mathcal{T}(\xi;z)$ is of the form  
\begin{align}
&\mathcal{T}(\xi;z)=2+\left( 2\beta \, \mathsf{T}(x;z)\,  z^2 +\frac{(\beta-1)^2}{4} \right) \hbar^2 +O(\hbar^3),\\
&\mathsf{T}(x;z)\equiv T(z) 
+\left( \frac{x^2}{\beta} -\frac{1}{2}\right) \NPb \widetilde{\psi}'(z) \widetilde{\psi}(z) \NPb 
+\frac{x}{\sqrt{-\beta}}\, \widetilde{\psi}(z)\, G(z), \label{eq: sfT}
\end{align}
where $T(z)$ and $G(z)$ are the operators introduced in Definition \ref{def: SCA rep}, 
with $\di \sigma=\frac{1-\beta}{2\sqrt{\beta}}$. 
\end{proposition}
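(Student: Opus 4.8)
The plan is to expand each of the four constituents of $\mathcal T(\xi;z)=\mathcal T_{12}(\xi;z)$ in $\hbar$ directly, under the realization (\ref{eq: realize boo1})--(\ref{eq: realize boo3}) in which $\Lambda^{\pm}_i(z)$ are the normal-ordered exponentials of $\pm\varphi_q(q^{\mp1}z)\pm\widetilde\varphi_q(q^{\mp1}z)$ used in Section \ref{subsec: lim}, and then to identify the resulting $\hbar^2$-coefficient with $2\beta\,\mathsf T(x;z)z^2+\frac{(\beta-1)^2}{4}$ by appealing to Proposition \ref{prop: TG boson}, (\ref{eq: realize psi}), and the boson--fermion correspondence (Lemma \ref{lem: fml BF corresp}). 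With $\xi=e^{x\hbar}$ each $M_k(\xi;z)$ is a scalar prefactor times a normal-ordered exponential $:\exp E_k(\hbar):$, where $E_k(\hbar)$ is a finite $\mathbb Z$-linear combination of $\varphi_q(q^{\pm1}\xi z)$, $\varphi_q(q^{\pm1}z)$, $\widetilde\varphi_q(q^{\pm1}\xi z)$, $\widetilde\varphi_q(q^{\pm1}z)$ with $E_k(0)=0$; in particular $M_1+M_2\to 1+1=2$ and $z^2M_3+z^2M_4\to0$ as $\hbar\to0$, which gives the $\hbar^0$-term.

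For the $M_3$ and $M_4$ terms the scalar prefactors $(1-q_1\xi)(1-q_3\xi)$ and $(1-q_1^{-1}\xi)(1-q_3^{-1}\xi)$ vanish to order $\hbar^2$, with leading coefficients $(1+x)(x-\beta)$ and $-(1-x)(x+\beta)$, so only the limiting vertex operators are relevant; since $:\Lambda_1^+(\xi z)\Lambda_2^-(z):\,\to\,:e^{2\phi(z)}:$ and $:\Lambda_1^-(\xi z)\Lambda_2^+(z):\,\to\,:e^{-2\phi(z)}:$, the pair $z^2M_3+z^2M_4$ contributes $(1+x)(x-\beta)z^2:e^{2\phi(z)}:\,-\,(1-x)(x+\beta)z^2:e^{-2\phi(z)}:$ at order $\hbar^2$, and nothing at order $\hbar^1$.

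The substantive step is the second-order expansion of $M_1+M_2=q:\exp E_1:\,+\,q^{-1}:\exp E_2:$. I will first record $q^{\pm1}=1\pm\frac{\beta-1}{2}\hbar+\frac{(\beta-1)^2}{8}\hbar^2+O(\hbar^3)$, the expansions $\varphi_q(e^{c\hbar}z)=\phi(z)+c\hbar\,z\phi'(z)+O(\hbar^2)$ and $\widetilde\varphi_q(e^{c\hbar}z)=\frac{\sqrt\beta}{2}\hbar\,z\widetilde\phi'(z)+O(\hbar^2)$, and their $\hbar^2$-terms (obtained by Taylor-expanding the coefficient functions $\frac{(1+q_1^n)(1+q_3^n)}{1+q_2^{-n}}$, $\frac{1-q_3^n}{1+q_2^{-n}}$, $1-q_1^n$ and the argument powers $(e^{c\hbar}z)^{\pm n}$, which collapse considerably; for example $\frac{(1+q_1^n)(1+q_3^n)}{1+q_2^{-n}}=2+\frac{n^2\beta}{2}\hbar^2+O(\hbar^3)$). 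Combining these with the elementary identity $\mathrm{Coeff}_{\hbar^2}\!\left(:\exp E(\hbar):\right)=\mathrm{Coeff}_{\hbar^2}E(\hbar)+\frac12:\!\left(\mathrm{Coeff}_{\hbar^1}E(\hbar)\right)^2\!:$ (valid since $E(0)=0$), one finds first that the order-$\hbar^1$ parts of $M_1$ and $M_2$ cancel, because $\mathrm{Coeff}_{\hbar^1}E_2=-\mathrm{Coeff}_{\hbar^1}E_1$, so that $\mathrm{Coeff}_{\hbar^1}\mathcal T(\xi;z)=0$; and then, after the prefactor contributions cancel against the $\mathrm{Coeff}_{\hbar^2}E_k$ contributions up to a residual second-derivative term, the upshot is
\[
\mathrm{Coeff}_{\hbar^2}(M_1+M_2)=\tfrac{(\beta-1)^2}{4}+z^2\!\left[(1-\beta)\bigl(x\,\phi''(z)+\sqrt\beta\,\widetilde\phi''(z)\bigr)+x^2:\phi'(z)^2:+2x\sqrt\beta\,\phi'(z)\widetilde\phi'(z)+\beta:\widetilde\phi'(z)^2:\right].
\]

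Adding the $M_3,M_4$ contribution and comparing with $2\beta\,\mathsf T(x;z)z^2+\frac{(\beta-1)^2}{4}$ reduces the statement to a single operator identity in $\phi'(z),\widetilde\phi'(z),\phi''(z),\widetilde\phi''(z)$ and $:e^{\pm2\phi(z)}:$. On the $\mathsf T$ side, Proposition \ref{prop: TG boson} supplies $2\beta T(z)$ (with $\sigma=\frac{1-\beta}{2\sqrt\beta}$) in exactly these terms; the boson--fermion correspondence gives $\NPb\widetilde\psi'(z)\widetilde\psi(z)\NPb=\tfrac12:\bigl(\phi'(z)^2+e^{2\phi(z)}+e^{-2\phi(z)}\bigr):$, obtained by subtracting from $:\phi'(z)^2:$ the relation $\NPb\psi'(z)\psi(z)\NPb=\tfrac12:\bigl(\phi'(z)^2-e^{2\phi(z)}-e^{-2\phi(z)}\bigr):$ that is implicit in Proposition \ref{prop: TG boson}; and, using $\widetilde\psi(z)\psi(z)=\sqrt{-1}\,\phi'(z)$ together with the $(w-z)$-expansion of $\widetilde\psi(z)\psi(w)$ (which yields $\widetilde\psi(z)\psi'(z)$), the term $\widetilde\psi(z)G(z)$ of $\mathsf T$ becomes bosonic. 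Substituting all of this into $\mathsf T(x;z)$ and matching the coefficients of $:\phi'(z)^2:$, $:\widetilde\phi'(z)^2:$, $\phi'(z)\widetilde\phi'(z)$, $\phi''(z)$, $\widetilde\phi''(z)$ and $:e^{\pm2\phi(z)}:$ on the two sides will complete the proof. The hardest part will be precisely this final identification: it requires error-free control of the order-$\hbar^2$ data of $\varphi_q$ and $\widetilde\varphi_q$ and of the normal-ordered exponentials, and careful tracking of the $\sqrt{-1}$ and $\sqrt{\pm\beta}$ factors that appear when the fermionic currents $T(z),G(z),\widetilde\psi(z)$ are rewritten in the single boson $\phi$.
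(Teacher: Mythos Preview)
Your proposal is correct and follows essentially the same route as the paper's proof: expand $M_1,M_2$ via the exponents $E_k(\hbar)$ of $\varphi_q,\widetilde\varphi_q$, read off the $\hbar^2$-contributions of $M_3,M_4$ from their vanishing scalar prefactors, and then match the resulting bosonic expression against $\mathsf T(x;z)$ using Proposition~\ref{prop: TG boson} and Lemma~\ref{lem: fml BF corresp}. The only cosmetic difference is that the paper absorbs the prefactors $q^{\pm1}$ into the exponents by writing $M_k=e^{\overline M_k}$ with $\overline M_k=E_k\pm\frac{\beta-1}{2}\hbar$, which slightly streamlines the bookkeeping but is equivalent to your separate treatment of $q^{\pm1}$.
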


\begin{remark}\label{rem: commuting vir}
If $(x_1,x_2)=(\pm 1,\mp\beta)$,  
the operators $\mathsf{T}(x_1;z)$ and $\mathsf{T}(x_2;z)$ correspond, up to scalar multiples, 
to two commuting Virasoro algebras discussed in \cite{BBFLT2011Instanton}.\footnote{
The parameter $\beta$ corresponds to $-b^2$ or $-b^{-2}$ in \cite{BBFLT2011Instanton}.}
These commuting Virasoro algebras are based on a certain factorization property 
of coset conformal field theories 
(See also \cite{Wyllard2011Coset,CSS1989Renormalization,CPSS1989Fusions,Lashkevich1992Superconf}). 
\end{remark}

\begin{proof}[Proof of Proposition \ref{prop: limit calT}]
We set 
\begin{align}
&\overline{M}_1(\xi;z)=\varphi_q\left(\xi q^{-1} z\right)+\widetilde{\varphi}_q\left(\xi q^{-1} z\right)
-\varphi_q\left(q^{-1} z\right)+\widetilde{\varphi}_q\left(q^{-1} z\right)+ \frac{\beta - 1}{2}\hbar, \\
&\overline{M}_2(\xi;z)=-\varphi_q\left(\xi q z\right)-\widetilde{\varphi}_q\left(\xi q z\right)
+\varphi_q\left(q z\right)-\widetilde{\varphi}_q\left(q z\right)- \frac{\beta-1}{2}\hbar, 
\end{align}
so that $M_1(\xi;z)=e^{\overline{M}_1(\xi;z)},M_2(\xi;z)=e^{\overline{M}_2(\xi;z)}$. 
Since $\di \lim_{\hbar \to 0}\overline{M}_1(\xi;z)=\lim_{\hbar \to 0}\overline{M}_2(\xi;z)=0$, 
we have 
\begin{align}
\lim_{\hbar \rightarrow 0} M_1(\xi; z)=\lim_{\hbar \rightarrow 0} M_2(\xi; z)=1. 
\end{align}
This implies $\di \mathrm{Coeff}_{\hbar^0}\mathcal{T}(\xi;z)= 2$. 
Note that the $\hbar$-expansions of $M_3(\xi;z)$ and $M_4(\xi;z)$ contain 
no terms of order lower than $\hbar^2$. 

Next, consider the first derivatives with respect to $\hbar$: 
\begin{align}
%&\lim _{\hbar \rightarrow 0} \frac{\partial}{\partial \hbar}\overline{M}_1(\xi ; z)
%=x\left(\sum_{n \in \mathbb{Z}} a_n z^{-n}\right)
%+\sqrt{\beta}\left(\sum_{n \in \mathbb{Z}} \widetilde{a}_n z^{-n}\right)+\frac{\beta-1}{2},\\
%&\lim _{\hbar \rightarrow 0} \frac{\partial}{\partial \hbar}\overline{M}_2(\xi ; z)
%=-x\left(\sum_{n \in \mathbb{Z}} a_n z^{-n}\right)
%-\sqrt{\beta}\left(\sum_{n \in \mathbb{Z}} \widetilde{a}_n z^{-n}\right)-\frac{\beta-1}{2},
&\lim _{\hbar \rightarrow 0} \frac{\partial}{\partial \hbar}\overline{M}_1(\xi ; z)
=x \,\phi'(z)\cdot z
+\sqrt{\beta} \,\widetilde{\phi}'(z)\cdot z+\frac{\beta-1}{2},\\
&\lim _{\hbar \rightarrow 0} \frac{\partial}{\partial \hbar}\overline{M}_2(\xi ; z)
=-x \, \widetilde{\phi}'(z)\cdot z
-\sqrt{\beta}\,\widetilde{\phi}'(z)\cdot z -\frac{\beta-1}{2}. 
\end{align}
Thus, $\di \mathrm{Coeff}_{\hbar^1}\mathcal{T}(\xi;z)= 0$.

The second derivatives yield
\begin{align}
&\lim_{\hbar \rightarrow 0} \frac{\partial^2}{\partial \hbar^2} \overline{M}_1(\xi;z)
=\left(-x^2-x+\beta x\right)\Bigg(\sum_{n \neq 0} n a_n z^{-n}\Bigg)
+\left(\beta^{\frac{3}{2}}-x \sqrt{\beta}\right)\Bigg(\sum_{n \neq 0} n \widetilde{a}_n z^{-n}\Bigg),\\
&\lim_{\hbar \rightarrow 0} \frac{\partial^2}{\partial \hbar^2} \overline{M}_2(\xi;z)
=\left(x^2-x+\beta x\right)\Bigg(\sum_{n \neq 0} n a_n z^{-n}\Bigg)
+\left(-2\sqrt{\beta}+\beta^{\frac{3}{2}}+x \sqrt{\beta}\right)\Bigg(\sum_{n \neq 0} n \widetilde{a}_n z^{-n}\Bigg).
\end{align}
Adding these, we have 
\begin{align}
\lim_{\hbar \rightarrow 0} \frac{\partial^2}{\partial \hbar^2} 
\left( \overline{M}_1(\xi;z)+\overline{M}_2(\xi;z) \right)
=2(\beta -1) \Bigg\{ x \Bigg(\sum_{n \neq 0} n a_n z^{-n}\Bigg) 
+\sqrt{\beta} \Bigg(\sum_{n \neq 0} n \widetilde{a}_n z^{-n}\Bigg) \Bigg\}.
\end{align}
Note that the second derivatives of $M_i(\xi; z)$ for $i=1,2$ can be computed as 
\begin{align}
\lim_{\hbar \rightarrow 0}  \frac{\partial^2}{\partial \hbar^2} M_i(\xi;z)
=\lim_{\hbar \rightarrow 0}\left\{ \left( \frac{\partial}{\partial \hbar}\overline{M}_i(\xi ; z) \right)^2
+ \frac{\partial^2}{\partial \hbar^2} \overline{M}_i(\xi;z) \right\} \quad (i=1,2). 
\end{align}
Furthermore, the second derivatives of $M_3(\xi; z)$ and $M_4(\xi; z)$ yield
\begin{align}
&\lim_{\hbar \rightarrow 0}  \frac{\partial^2}{\partial \hbar^2} M_3(\xi;z)
=2(1+x)(x-\beta):e^{2 \phi(z)}:,\\
&\lim_{\hbar \rightarrow 0}  \frac{\partial^2}{\partial \hbar^2} M_4(\xi;z)
=2(-1+x)(x+\beta):e^{-2 \phi(z)}:. 
\end{align}
Combining all terms, we obtain 
\begin{align}\label{eq: sum lim M1M2}
\lim_{\hbar \rightarrow 0}  \frac{\partial^2}{\partial \hbar^2} \mathcal{T}(\xi;z) 
=\Bigg\{ & 2x^2 \phi'(z)^2 + 2\beta \, \widetilde{\phi}'(z)^2 +4x\sqrt{\beta} \, \phi'(z)\widetilde{\phi}'(z)
-2(\beta -1)\left( x \phi''(z) +\sqrt{\beta}\, \widetilde{\phi}''(z) \right) \nonumber  \\
&+2(1+x)(x-\beta):e^{2 \phi(z)}:+2(-1+x)(x+\beta):e^{-2 \phi(z)}:\Bigg\}\cdot z^2+\frac{(\beta-1)^2}{2}
\end{align}
Finally, by applying Lemma \ref{lem: fml BF corresp} (Appendix \ref{sec: fml bf corresp}) to the right hand side of (\ref{eq: sfT}), 
we obtain
\begin{align}
\mathrm{Coeff}_{\hbar^2}\mathcal{T}(\xi;z) = 2\beta \, \mathsf{T}(x;z)\,  z^2 +\frac{(\beta-1)^2}{4}. 
\end{align}
\end{proof}

The following combination allows us to extract only 
$T(z)$ from the limit.

\begin{corollary}
Let $k_1$ and $k_2$ be non-zero complex parameters which are independent of $\hbar$, 
and set 
\begin{align}
\xi_1= \exp \left( \pm\sqrt{\dfrac{\beta k_2}{2 k_1}} \hbar \right), \quad 
\xi_2= \exp \left( \mp \sqrt{\dfrac{\beta k_1}{2 k_2}} \hbar \right). \label{eq: xik1k2}
\end{align} 
Then, we have 
\begin{align}
k_1 \mathcal{T}(\xi_1;z)+k_2 \mathcal{T}(\xi_2;z)=2(k_1+k_2)+(k_1+k_2)\left( 2 \beta \, T(z) \cdot z^2 +\frac{1}{4}(\beta -1)^2 \right)\hbar^2+O(\hbar^3).
\end{align}
\end{corollary}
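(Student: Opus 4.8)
The plan is to apply Proposition \ref{prop: limit calT} to each summand and then exploit the freedom in $(x_1,x_2)$ to cancel the two pieces of $\mathsf{T}(x;z)$ that are not proportional to $T(z)$. First I would write $\xi_j=e^{x_j\hbar}$ ($j=1,2$), so that by (\ref{eq: xik1k2}) one has $x_1=\pm\sqrt{\beta k_2/(2k_1)}$ and $x_2=\mp\sqrt{\beta k_1/(2k_2)}$ with correlated signs. By Proposition \ref{prop: limit calT}, each $\mathcal{T}(\xi_j;z)$ expands as $2+\bigl(2\beta\,\mathsf{T}(x_j;z)\,z^2+\tfrac{(\beta-1)^2}{4}\bigr)\hbar^2+O(\hbar^3)$; in particular its $\hbar^0$- and $\hbar^1$-coefficients are $2$ and $0$. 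Hence the linear combination begins as
\begin{align*}
k_1\mathcal{T}(\xi_1;z)+k_2\mathcal{T}(\xi_2;z)
&=2(k_1+k_2)\\
&\quad+\hbar^2\Bigl[\,2\beta z^2\bigl(k_1\mathsf{T}(x_1;z)+k_2\mathsf{T}(x_2;z)\bigr)+(k_1+k_2)\tfrac{(\beta-1)^2}{4}\Bigr]+O(\hbar^3).
\end{align*}

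Next I would insert the definition (\ref{eq: sfT}) of $\mathsf{T}(x;z)$, which gives
\begin{align*}
k_1\mathsf{T}(x_1;z)+k_2\mathsf{T}(x_2;z)
&=(k_1+k_2)\,T(z)+\Bigl(\tfrac{k_1x_1^2+k_2x_2^2}{\beta}-\tfrac{k_1+k_2}{2}\Bigr)\NPb\widetilde{\psi}'(z)\widetilde{\psi}(z)\NPb\\
&\quad+\frac{k_1x_1+k_2x_2}{\sqrt{-\beta}}\,\widetilde{\psi}(z)\,G(z),
\end{align*}
where $T(z)$ and $G(z)$ are the currents of Definition \ref{def: SCA rep} with $\sigma=\tfrac{1-\beta}{2\sqrt{\beta}}$. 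It then only remains to check the two scalar identities $k_1x_1+k_2x_2=0$ and $k_1x_1^2+k_2x_2^2=\tfrac{\beta}{2}(k_1+k_2)$ for the choice (\ref{eq: xik1k2}): the first holds because $k_1x_1=\pm\sqrt{\beta k_1k_2/2}=-k_2x_2$, and the second because $k_1x_1^2=\beta k_2/2$ and $k_2x_2^2=\beta k_1/2$. With these, both correction terms vanish, so $k_1\mathsf{T}(x_1;z)+k_2\mathsf{T}(x_2;z)=(k_1+k_2)T(z)$, and substituting back into the expansion above yields exactly $2(k_1+k_2)+(k_1+k_2)\bigl(2\beta\,T(z)\,z^2+\tfrac14(\beta-1)^2\bigr)\hbar^2+O(\hbar^3)$, as claimed.

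There is essentially no obstacle once Proposition \ref{prop: limit calT} is in hand: the whole argument reduces to solving the pair of constraints on $(x_1,x_2)$ that annihilate the coefficients of $\NPb\widetilde{\psi}'(z)\widetilde{\psi}(z)\NPb$ and of $\widetilde{\psi}(z)\,G(z)$, and then observing that (\ref{eq: xik1k2}) is precisely the solution (unique up to the common sign ambiguity carried by the square roots). The only point requiring minor care is keeping the sign correlation between $x_1$ and $x_2$ consistent, so that $k_1x_1+k_2x_2$ genuinely cancels rather than reinforces.
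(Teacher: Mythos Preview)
Your proof is correct and is exactly the argument the paper has in mind: the paper merely states that the corollary ``follows immediately from Proposition \ref{prop: limit calT}'', and your write-up spells out precisely those immediate steps, including the verification that the choice (\ref{eq: xik1k2}) makes $k_1x_1+k_2x_2=0$ and $k_1x_1^2+k_2x_2^2=\tfrac{\beta}{2}(k_1+k_2)$ so that only the $T(z)$ contribution survives.
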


This  follows immediately from Proposition \ref{prop: limit calT}.

\begin{remark}
In this paper, we define the highest weight vectors $\ket{n,u}$ and the Fock module $\mathcal{F}_u$ 
so that the action of the zero modes $\boo_{i,0}$ yields integers, 
and we restrict our discussion to the Neveu--Schwarz sector. 
By modifying the Fock module and the highest weight vectors 
so that the action of $\boo_{i,0}$ gives half-integer values, 
we should also be able to treat the algebra and its representations in the Ramond sector.
\end{remark}

\section*{Acknowledgment}

The authors would like to thank H. Awata, H. Kanno and J. Shiraishi 
for valuable comments. 
This work is partially supported by JSPS KAKENHI 
(21K13803).

\appendix

\section{Proof of Proposition \ref{prop: rep tor gl2}}\label{sec: pf rep}

In this appendix, we give proof of Proposition \ref{prop: rep tor gl2}. 
By definition, it is clear that the relations (\ref{eq: def rel KK c}) and (\ref{eq: def rel KK}) hold 
on the representation. 
The operator products among the vertex operators take the form 
\begin{align}
&\rho_u (K^+_i(z))\cdot\rho_u (K^-_j(w))\\ 
&=\begin{cases}
\di \frac{(1-q^{-3}w/z)(1-q^3w/z)}{(1-q^{-1}w/z)(1-qw/z)}:\mathrm{(l.h.s.)}:& (i=j),\\ \\
\di \frac{(1-q_1 q w/z)^2 (1-q q_3 w/z)^2}{(1-q_1q^{-1}w/z)(1-q_3q^{-1}w/z)(1-qq_1^{-1}w/z)(1-qq_3^{-1}w/z)}
:\mathrm{(l.h.s.)}:& (i\neq j),\\
\end{cases}\nonumber \\
&\rho_u (K^-_i(z))\cdot\rho_u (K^+_j(w)) = :\mathrm{(l.h.s.)}: \quad (\forall i,j ),\\
&\rho_u (K^{\pm}_i(z))\cdot\rho_u (K^{\pm}_j(w)) = :\mathrm{(l.h.s.)}: \quad (\forall i,j ),
\end{align}
\begin{align}
&\rho_u (K^+_i(z))\cdot\rho_u (E_j(w))=
\begin{cases}
\di \frac{1-q_2^{-1}w/z}{1-q_2 w/z}\cdot q_2 :\mathrm{(l.h.s.)}: & (i= j),\\
\di \frac{(1-q_1^{-1}w/z)(1-q_3^{-1}w/z)}{(1-q_1w/z)(1-q_3w/z)}\cdot q_2^{-1} :\mathrm{(l.h.s.)}: & (i\neq j),
\end{cases}\\
&\rho_u (K^-_i(z))\cdot\rho_u (E_j(w))=
\begin{cases}
\di  q_2^{-1} :\mathrm{(l.h.s.)}: & (i= j),\\
\di  q_2 :\mathrm{(l.h.s.)}: & (i\neq j),
\end{cases}\\ 
&\rho_u (E_i(z))\cdot \rho_u (K^+_j(w))= :\mathrm{(l.h.s.)}: \quad (\forall i,j ),\\
&\rho_u (E_i(z))\cdot \rho_u (K^-_j(w))=
\begin{cases}
\di \frac{1-q^{-3}w/z}{1-q w/z} :\mathrm{(l.h.s.)}: & (i= j),\\
\di \frac{(1-q_1 q w/z)(1-q \, q_3w/z)}{(1-q_1 q^{-1} w/z)(1-q_3 q^{-1} w/z)} :\mathrm{(l.h.s.)}: & (i\neq j),
\end{cases}
\end{align}
\begin{align}
&\rho_u (K^+_i(z))\cdot\rho_u (F_j(w))=
\begin{cases}
\di \frac{1-q^3w/z}{1-q^{-1} w/z}\cdot q_2^{-1} :\mathrm{(l.h.s.)}: & (i= j),\\
\di \frac{(1-q_1 q w/z)(1-q q_3w/z)}{(1-q_1 q^3 w/z)(1-q^3 q_3 w/z)}\cdot q_2 :\mathrm{(l.h.s.)}: & (i\neq j),
\end{cases}\\
&\rho_u (K^-_i(z))\cdot\rho_u (F_j(w))=
\begin{cases}
\di  q_2 :\mathrm{(l.h.s.)}: & (i= j),\\
\di  q_2^{-1} :\mathrm{(l.h.s.)}: & (i\neq j),
\end{cases}\\ 
&\rho_u (F_i(z))\cdot \rho_u (K^+_j(w))= :\mathrm{(l.h.s.)}: \quad (\forall i,j ),\\
&\rho_u (F_i(z))\cdot \rho_u (K^-_j(w))=
\begin{cases}
\di \frac{1-q_2 w/z}{1-q_2^{-1} w/z} :\mathrm{(l.h.s.)}: & (i= j),\\
\di \frac{(1-q_1 w/z)(1- q_3w/z)}{(1-q_1^{-1} w/z)(1-q_3^{-1} w/z)} :\mathrm{(l.h.s.)}: & (i\neq j),
\end{cases}
\end{align}
\begin{align}
&\rho_u (E_i(z))\cdot\rho_u (E_j(w))=
\begin{cases}
\di (1-w/z)(1-q_2^{-1}w/z) z^2 :\mathrm{(l.h.s.)}: & (i=j),\\
\di  \frac{z^{-2}}{(1-q_1 w/z)(1-q_3w/z)} :\mathrm{(l.h.s.)}: & (i\neq j),
\end{cases}\label{eq: contraction EE}\\
&\rho_u (F_i(z))\cdot\rho_u (F_j(w))=
\begin{cases}
\di (1-w/z)(1-q_2w/z) z^2 :\mathrm{(l.h.s.)}: & (i=j),\\
\di \frac{z^{-2}}{(1-q_1^{-1} w/z)(1-q_3^{-1}w/z)} :\mathrm{(l.h.s.)}: & (i\neq j),\\
\end{cases}
\end{align}
\begin{align}
&\rho_u (E_i(z))\cdot \rho_u (F_j(w))=
\begin{cases}
\di \frac{z^{-2}}{(1-qw/z)(1-q^{-1}w/z)} :\mathrm{(l.h.s.)}: & (i=j),\\
\di (1-qq_1w/z)(1-qq_3w/z)z^2 :\mathrm{(l.h.s.)}: & (i\neq j), 
\end{cases} \label{eq: contraction EF}\\
&\rho_u (F_i(z))\cdot \rho_u (E_j(w))=
\begin{cases}
\di \frac{z^{-2}}{(1-qw/z)(1-q^{-1}w/z)} :\mathrm{(l.h.s.)}: & (i=j),\\ 
\di (1-qq_1w/z)(1-qq_3w/z)z^2:\mathrm{(l.h.s.)}: & (i\neq j).
\end{cases}\label{eq: contraction FE}
\end{align}
Here, $:\mathrm{(l.h.s.)}:$ stands for the normal ordering of the left hand side. 
By these formulas for the operator products, 
the relations (\ref{eq: def rel KK})--(\ref{eq: def rel FF}) immediately follow. 

Next, we show the relation (\ref{eq: def rel EF}). 
If $i = j$, 
(\ref{eq: contraction EF}) and (\ref{eq: contraction FE}) % and the identity 
%\begin{align}
%\frac{1}{(1-x)(1-p x)}-\frac{p^{-1} x^{-2}}{(1-p^{-1} x^{-1})(1-x^{-1})}
%=\frac{1}{1-p} \left( \delta(x) -p \delta (px)\right)\label{eq: formal series formula}
%\end{align}
yield 
\begin{align}
&\rho_u (E_i(z))\cdot \rho_u (F_j(w))-\rho_u (F_j(w))\cdot \rho_u (E_i(z))\nonumber \\
%&=\frac{z^{-2}}{1-q_2^{-1}} \left( \delta(qw/z) - q_2^{-1} \delta(q^{-1}w/z) \right)z^{\bof_{i,0}+1}w^{-\bof_{i,0}+1} :\eta_i(z)\xi_i(w): \nonumber \\
&=\frac{1}{q-q^{-1}} \left( \delta(qw/z) : \eta_i(z)\xi_i(q^{-1}z) : q^{\bof_{i,0}}
 - \delta(q^{-1}w/z):\eta_i(z)\xi_i(qz): q^{-\bof_{i,0}}\right). 
\end{align}
By the relations 
\begin{align}
:\eta_i(z)\xi_i(q^{-1}z) : q^{\bof_{i,0}}= \varphi^+_i(q^{-\frac{1}{2}}z)\times q^{\bof_{i,0}}=\rho_u(K_i^{+}(z)), \\
:\eta_i(z)\xi_i(qz): q^{-\bof_{i,0}}=\varphi^-_i(q^{-\frac{1}{2}}z)\times q^{-\bof_{i,0}}=\rho_u(K_i^{-}(z)), 
\end{align}
we can show that (\ref{eq: def rel EF})  holds in the case of $i = j$. 
If $i \neq j$, (\ref{eq: contraction EF}) and (\ref{eq: contraction FE}) yield 
\begin{align}
\rho_u (E_i(z))\cdot \rho_u (F_j(w))-\rho_u (F_j(w))\cdot \rho_u (E_i(z))=0. 
\end{align}
Therefore,  (\ref{eq: def rel EF}) holds in the case $i\neq j$.

Next, we show the Serre relation (\ref{eq:serr1}). 
We define 
\begin{align}
&P_{i,j}(z,w)=\begin{cases}
(1 - w/z) (1 - q_2^{-1} w/z) z^2, & i=j,\\
\dfrac{1}{(1 - q_1 w/z) (1 - q_3 w/z) z^2}, & i \neq j,
\end{cases}
\label{eq: EiEj OPE}
\end{align}
which is the function appearing in the operator product (\ref{eq: contraction EE}). 
We then set 
\begin{align}
&P_{i_1,i_2,i_3,i_4}(z_1, z_2,z_3,z_4)=\prod_{1\leq k < \ell \leq 4} P_{i_k, i_{\ell}}(z_{k}, z_{\ell}).
\end{align}
With this notation, the left hand side of the Serre relation (\ref{eq:serr1}) can be written, under the representation, 
as 
\begin{align}
& \underset{z_1, z_2, z_3}{\operatorname{Sym}}
\left[\rho_u (E_i\left(z_1\right) )   ,\left[\rho_u ( E_i\left(z_2\right)) ,\left[\rho_u (E_i\left(z_3\right)), \rho_u (E_j(w))\right]_{q_2}\right]\right]_{q_2^{-1}}\nonumber \\
&\qquad =\underset{z_1, z_2, z_3}{\operatorname{Sym}} \mathcal{P}(z_1, z_2,z_3,w) : \prod_{k=1}^3\rho_u (E_i(z_k))\cdot \rho_u (E_j(w))  :   \qquad (i\neq j), \label{eq: rep serre}\\
&\mathcal{P}(z_1, z_2,z_3,w) =\Big\{
P_{1112}(z_1, z_2, z_3, w) - q_2 P_{1121}(z_1, z_2, w, z_3) -  P_{1121}(z_1, z_3, w, z_2) \nonumber \\ 
&\quad + q_2 P_{1211}(z_1, w, z_3, z_2) - q_2^{-1} \Big(P_{1121}(z_2, z_3, w, z_1) - q_2 P_{1211}(z_2, w, z_3, z_1) \nonumber \\ 
&\quad - P_{1211}(z_3, w, z_2, z_1) + q_2 P_{2111}(w, z_3, z_2, z_1) \Big) \Big\}. \label{eq: cal P}
\end{align}
Note that in the case $i\neq j$ in (\ref{eq: contraction EE}), 
the operator product converges when $|q_1|<1, |q_3|<1$ and $|z|=|w|$. 
Thus, the operator product in (\ref{eq: rep serre}) also converges in the same region even after exchanging $z_i$'s and $w$. 
Therefore, (\ref{eq: cal P}) can be computed as an ordinary rational function. 
A direct computation shows that 
\begin{align}
\underset{z_1, z_2, z_3}{\operatorname{Sym}} \mathcal{P}(z_1, z_2,z_3,w) =0, \label{eq: sym calP}
\end{align}
which implies the Serre relation (\ref{eq:serr1}). 
The Serre relation (\ref{eq:serr2}) can be shown in a similar manner.
Thus, Proposition \ref{prop: rep tor gl2} is proved. 
\qed

\section{Some formulas on the boson-fermion correspondence}\label{sec: fml bf corresp}

In this appendix, we present several formulas on the boson-fermion correspondence. 

\begin{lemma}\label{lem: fml BF corresp}
Under the correspondence (\ref{eq: realize psi}), it follows that 
\begin{align}
&\NPb \psi'(w) \psi(w) \NPb 
=\frac{1}{2}  \left( :\phi'(w)^2 -e^{2\phi(w)} -e^{-2\phi(w)} : \right) \label{eq: fml BF corrsp 1}, \\
&\NPb \widetilde{\psi}'(w) \widetilde{\psi}(w) \NPb 
=\frac{1}{2}  \left( :\phi'(w)^2 +e^{2\phi(w)} +e^{-2\phi(w)} : \right) , \label{eq: fml BF corrsp 2}\\
&\widetilde{\psi}(w) \psi(w)=\sqrt{-1} \phi'(w), \label{eq: fml BF corrsp 3}\\
&\widetilde{\psi}(w) \psi'(w)=\frac{1}{2\sqrt{-1}} \left(:-e^{2\phi(w)} +e^{-2\phi(w)} -\phi''(w): \right). 
\label{eq: fml BF corrsp 4}
\end{align}
\end{lemma}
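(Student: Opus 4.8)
The plan is to reduce all four identities to Wick's theorem for the bosonic vertex operators $V_{\pm}(z):=\,{:}e^{\pm\phi(z)}{:}$. First I would record the only contraction needed, $V_{\epsilon}(z)V_{\epsilon'}(w)=(z-w)^{\epsilon\epsilon'}\,{:}V_{\epsilon}(z)V_{\epsilon'}(w){:}$ for $\epsilon,\epsilon'\in\{+1,-1\}$, an expansion valid for $|z|>|w|$ that follows from $[\phi(z)_{+},\phi(w)_{-}]=\log(z-w)$; its consequences are $\langle\psi(z)\psi(w)\rangle=\langle\widetilde{\psi}(z)\widetilde{\psi}(w)\rangle=(z-w)^{-1}$ and $\langle\widetilde{\psi}(z)\psi(w)\rangle=0$, which is precisely what the normalizations $1/\sqrt{-2}$ and $1/\sqrt{2}$ in (\ref{eq: realize psi}) are designed to produce. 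I would also use the standard fact that the fermionic normal ordering $\NPb\ast\NPb$ of Definition~\ref{def: SCA rep} coincides with point-splitting after subtraction of the scalar two-point function, so that $\NPb\psi'(w)\psi(w)\NPb=\lim_{z\to w}\bigl(\psi'(z)\psi(w)+(z-w)^{-2}\bigr)$ and likewise for $\widetilde{\psi}$, whereas $\widetilde{\psi}(w)\psi(w)$ and $\widetilde{\psi}(w)\psi'(w)$ need no subtraction since $[\widetilde{\psi}_{\mu},\psi_{\nu}]_{+}=0$ makes the relevant products regular on the diagonal.

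I would prove (\ref{eq: fml BF corrsp 3}) first, as the cleanest. Substituting (\ref{eq: realize psi}) and contracting,
\[
\widetilde{\psi}(z)\psi(w)=\frac{1}{2\sqrt{-1}}\Bigl[(z-w)\,{:}V_{+}(z)V_{+}(w){:}-(z-w)\,{:}V_{-}(z)V_{-}(w){:}-(z-w)^{-1}\bigl({:}e^{\phi(z)-\phi(w)}{:}-{:}e^{\phi(w)-\phi(z)}{:}\bigr)\Bigr].
\]
The first two terms vanish on the diagonal; the simple poles in the last bracket cancel (this is $\langle\widetilde{\psi}(z)\psi(w)\rangle=0$), and inserting $\phi(z)-\phi(w)=(z-w)\phi'(w)+O((z-w)^{2})$ into the normal-ordered exponentials leaves $\tfrac{1}{2\sqrt{-1}}\cdot(-2\phi'(w))=\sqrt{-1}\,\phi'(w)$. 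Then (\ref{eq: fml BF corrsp 4}) is the same computation applied to $\widetilde{\psi}(z)\psi'(w)=\partial_{w}\bigl(\widetilde{\psi}(z)\psi(w)\bigr)$, with the limit $z\to w$ taken afterwards: now the prefactors $(z-w)$ in front of the charge-$\pm2$ vertex operators survive the $w$-derivative and give $\mp\,{:}e^{\pm 2\phi(w)}{:}$, while carrying the expansion of the charge-$0$ part one order further produces the term $-\phi''(w)$, so the limit is $\tfrac{1}{2\sqrt{-1}}\,{:}\bigl(-e^{2\phi(w)}+e^{-2\phi(w)}-\phi''(w)\bigr){:}$.

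For (\ref{eq: fml BF corrsp 1}) and (\ref{eq: fml BF corrsp 2}) I would run the same computation on $\psi(z)\psi(w)$ and $\widetilde{\psi}(z)\widetilde{\psi}(w)$. Here the charge-$0$ part is \emph{identical} for the two: the cross terms combine to $\tfrac12(z-w)^{-1}Q(z,w)$ with $Q(z,w):={:}e^{\phi(z)-\phi(w)}{:}+{:}e^{\phi(w)-\phi(z)}{:}=2+(z-w)^{2}\,{:}\phi'(w)^{2}{:}+O((z-w)^{3})$; after $\partial_{z}$ this gives the double pole $-(z-w)^{-2}$, exactly cancelled by the point-splitting subtraction $+(z-w)^{-2}$, plus a finite piece $\tfrac12\,{:}\phi'(w)^{2}{:}$. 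The whole difference between the two identities sits in the charge-$\pm2$ part, which carries the prefactor $1/(\sqrt{-2})^{2}=-\tfrac12$ for $\psi$ and $1/(\sqrt{2})^{2}=+\tfrac12$ for $\widetilde{\psi}$, and contributes $\mp\tfrac12\bigl({:}e^{2\phi(w)}{:}+{:}e^{-2\phi(w)}{:}\bigr)$ after $\partial_{z}$ and the limit. Summing, $\NPb\psi'(w)\psi(w)\NPb$ and $\NPb\widetilde{\psi}'(w)\widetilde{\psi}(w)\NPb$ come out as $\tfrac12\,{:}\bigl(\phi'(w)^{2}\mp e^{2\phi(w)}\mp e^{-2\phi(w)}\bigr){:}$ with the upper sign for $\psi$ and the lower for $\widetilde{\psi}$, i.e.\ (\ref{eq: fml BF corrsp 1}) and (\ref{eq: fml BF corrsp 2}).

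The Wick contractions are routine; the part that needs care — and the main obstacle — is the bookkeeping of the Taylor expansion of ${:}e^{\pm(\phi(z)-\phi(w))}{:}$ to second order in $z-w$. One must expand ${:}e^{X}{:}$ with $X=(z-w)\phi'(w)+\tfrac12(z-w)^{2}\phi''(w)+\cdots$ as $1+X+\tfrac12\,{:}X^{2}{:}+\cdots$, so that the coefficient of $(z-w)^{2}$ is $\tfrac12\phi''(w)+\tfrac12\,{:}\phi'(w)^{2}{:}$ with the \emph{normal-ordered} square rather than a naive one; and in (\ref{eq: fml BF corrsp 1})--(\ref{eq: fml BF corrsp 2}) one must see that the two $\phi''$ contributions from ${:}e^{\phi(z)-\phi(w)}{:}$ and ${:}e^{\phi(w)-\phi(z)}{:}$ cancel, which is exactly what leaves a clean ${:}\phi'(w)^{2}{:}$ and no stray $\phi''$. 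A secondary point is to fix once and for all a branch of $\sqrt{-1}$, $\sqrt{2}$, $\sqrt{-2}$ so that $\sqrt{2}\,\sqrt{-2}=2\sqrt{-1}$ and $(\sqrt{-2})^{2}=-2$ hold with the signs used above; once these conventions are pinned down, no further subtlety remains.
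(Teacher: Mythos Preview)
Your proposal is correct and takes essentially the same approach as the paper: both proofs substitute the bosonized expressions (\ref{eq: realize psi}), apply the vertex-operator contraction $V_{\epsilon}(z)V_{\epsilon'}(w)=(z-w)^{\epsilon\epsilon'}\,{:}V_{\epsilon}(z)V_{\epsilon'}(w){:}$, identify the fermionic normal order with point-splitting minus the scalar two-point function, Taylor-expand the charge-$0$ pieces to second order, and read off the limit. The only differences are cosmetic --- the paper treats (\ref{eq: fml BF corrsp 1})--(\ref{eq: fml BF corrsp 2}) first and writes out $\NPb\psi(z)\psi(w)\NPb$ before differentiating in $z$, whereas you reorder the four identities and differentiate first --- but the substance is identical.
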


\begin{proof}
By rewriting the normal ordering of fermions in terms of bosons, we have  
\begin{align}
\NPb \psi(z) \psi(w) \NPb = &\,  \psi(z) \psi(w)-\frac{1}{z-w} \nonumber \\
=&-\frac{1}{2}\Big( :(z-w)e^{\phi(z)+\phi(w)}-\frac{1}{z-w}e^{\phi(z)-\phi(w)} \\
&\qquad \quad -\frac{1}{z-w}e^{-\phi(z)+\phi(w)}+(z-w)e^{-\phi(z)-\phi(w)}: \Big)-\frac{1}{z-w}.\nonumber 
\end{align}
Taking the derivative with respect to $z$ and 
expanding around $w$ in Laurent series, we find
\begin{align}
\NPb \psi'(z) \psi(w) \NPb =\frac{1}{2} \left( :\phi'(w)^2 -e^{2\phi(w)} -e^{-2\phi(w)} :\right) +O(z-w). 
\end{align}
Taking the limit $z\to w$ yields (\ref{eq: fml BF corrsp 1}). 

Similarly, we have 
\begin{align}
\NPb\widetilde{\psi}(z) \widetilde{\psi}(w)\NPb
=&\, \widetilde{\psi}(z) \widetilde{\psi}(w)-\frac{1}{z-w} \nonumber \\
=&\frac{1}{2}\Big( :(z-w)e^{\phi(z)+\phi(w)}+\frac{1}{z-w}e^{\phi(z)-\phi(w)} \\
&\qquad \quad +\frac{1}{z-w}e^{-\phi(z)+\phi(w)}+(z-w)e^{-\phi(z)-\phi(w)}: \Big)-\frac{1}{z-w}\nonumber 
\end{align}
and 
\begin{align}
\NPb \widetilde{\psi}'(z) \widetilde{\psi}(w) \NPb =\frac{1}{2} \left( :\phi'(w)^2 +e^{2\phi(w)} +e^{-2\phi(w)} :\right) +O(z-w). 
\end{align}
Therefore, we obtain (\ref{eq: fml BF corrsp 2}). 

Moreover, we have 
\begin{align}
\widetilde{\psi}(z) \psi(w)=&\frac{1}{2\sqrt{-1}}
\Big( :(z-w)e^{\phi(z)+\phi(w)}-\frac{1}{z-w}e^{\phi(z)-\phi(w)} \nonumber \\
&\qquad \qquad +\frac{1}{z-w}e^{-\phi(z)+\phi(w)}-(z-w)e^{-\phi(z)-\phi(w)}: \Big) \\
=&\frac{1}{2\sqrt{-1}}\left\{ -2 \phi'(w) +\left(e^{2\phi(w)} -e^{-2\phi(w)} -\phi''(w)\right)(z-w) \right\}+O\left((z-w)^2\right)\nonumber
\end{align}
and 
\begin{align}
\widetilde{\psi}(z) \psi'(w)
=\frac{1}{2\sqrt{-1}}\left( -e^{2\phi(w)} +e^{-2\phi(w)} -\phi''(w)\right)+O(z-w). 
\end{align}
Thus, we can obtain (\ref{eq: fml BF corrsp 3}) and (\ref{eq: fml BF corrsp 4}).
\end{proof}

\section{Operator product formulas of the screening currents}\label{sec: OPE screening}

In this Appendix, we list formulas for the operator products among the screening currents: 
\begin{align}
&S^+_i(z)S^+_i(w)=(1-w/z)\frac{(q_2^{-1}w/z;q_3^2)_{\infty}}{(q_1^{-1}q_3w/z;q_3^2)_{\infty}}z^{1+\frac{1}{\beta}}
:S^+_i(z)S^+_i(w):,\\
&S^+_i(z)S^+_j(w)=\frac{(q_1q_3^2w/z;q_3^2)_{\infty}}{(q_1^{-1}w/z;q_3^2)_{\infty}}z^{-1+\frac{1}{\beta}}
:S^+_i(z)S^+_j(w):\qquad (i\neq j),
\end{align}
\begin{align}
&S^-_i(z)S^-_i(w)=(1-w/z)\frac{(q_2^{-1}w/z;q_1^2)_{\infty}}{(q_1q_3^{-1}w/z;q_1^2)_{\infty}}z^{1+\beta}
:S^-_i(z)S^-_i(w):,\\
&S^-_i(z)S^-_j(w)=\frac{(q_1^2q_3w/z;q_1^2)_{\infty}}{(q_3^{-1}w/z;q_1^2)_{\infty}}z^{-1+\beta}
:S^-_i(z)S^-_j(w):\qquad (i\neq j),
\end{align}
\begin{align}
&S^{\pm}_i(z)S^{\mp}_i(w)=:S^{\pm}_i(z)S^{\mp}_i(w):,\\
&S^{\mp}_i(z)S^{\pm}_i(w)=:S^{\mp}_i(z)S^{\pm}_i(w):,
\end{align}
\begin{align}
&S^{\pm}_i(z)S^{\mp}_j(w)=\frac{z^{-2}}{(1-qw/z)(1-q^{-1}w/z)}:S^{\pm}_i(z)S^{\mp}_j(w):,\\
&S^{\mp}_i(z)S^{\pm}_j(w)=\frac{z^{-2}}{(1-qw/z)(1-q^{-1}w/z)}:S^{\mp}_i(z)S^{\pm}_j(w):\qquad (i\neq j).
\end{align}
Here, we used the standard notation $\di (a;q)_{\infty}=\prod_{n=1}^{\infty}(1-q^{n-1}a)$. 
Note that the operator product formulas for the screening currents $S^{\pm}_i(z)$ 
are slightly different from the ones for bosonic screenings of the quantum affine algebra  $U_q(\widehat{\mathfrak{sl}}_2)$ or the $q$-deformed $N=2$ superconformal algebra \cite{Matsuo1994qdeformation,AHKS2024quantum}.

%%%%%%%%%%%%%%%%%%%%%%%%%%%%%%%%%%%%%
%%%%%%%%%%%%%%%%%%%%%%%%%%%%%%%%%%%%%
%%%%%%%%%%%%%%%%%%%%%%%%%%%%%%%%%%%%%
%\bibliographystyle{myutcaps}
%\bibliographystyle{myspmpsci}
%\bibliography{myreferences}
%%%%%%%%%%%%%%%%%%%%%%%%%%%%%%%%%%%%%
%%%%%%%%%%%%%%Reference%%%%%%%%%%%%%%
%%%%%%%%%%%%%%%%%%%%%%%%%%%%%%%%%%%%%

\end{document}